\pgfplotsset{compat=1.18}
\newtheorem{theorem}{Theorem}[section]
\newtheorem{lemma}{Lemma}[section]
\newtheorem{claim}{Claim}[section]
\newtheorem{corollary}{Corollary}[section]
\newtheorem{conjecture}{Conjecture}[section]
\newtheorem{problem}{Problem}
\newtheorem{definition}{Definition}[section]
\newtheorem{remark}{Remark}[section]
\newcommand{\Rmnum}[1]{\expandafter\@slowromancap\romannumeral #1@}
\journal{}
\begin{document}
	
	\begin{frontmatter}

		\title{Geometrization of Graphs: Towards Bounding the Chromatic Number via High-Dimensional Embedding\\}
		\author{Qiming Fang$^{a}$, \ Sihong Shao$^b$}

		\affiliation{organization={Beijing International Center for Mathematical Research, Peking University},
			city={Beijing},
			postcode={100871}, 
			country={China}}
		
		\affiliation{organization={CAPT, LMAM and School of Mathematical Sciences, Peking University},
			city={Beijing},
			postcode={100871}, 
			country={China}}

		\begin{abstract}
			
			We establish a geometric framework by transforming a graph $G$ into a $(d-1)$-dimensional CW complex $U^{d-1}(G)$. This construction is achieved by systematically attaching $i$-spheres ($2 \le i \le d-1$) to $G$ according to specific rules, ensuring that the $j$-th homotopy group of $U^{d-1}(G)$ are trivial for $j = 0, 1, \dots, d-2$. Building upon this construction, we provide a necessary and sufficient condition for $U^{d-1}(G)$ to be embeddable into $\mathbb{R}^d$, which yields an upper bound for the chromatic number $\chi(G)$.
			To be more specific, we prove that if $G$ does not contain $K_{d+3}$ and $K_{i, d+4-i}$ ($i \in \{2, 3, \dots, \lfloor \frac{d+4}{2} \rfloor \}$) as a minor, 
			then $U^{d-1}(G)$ embeds into $\mathbb{R}^d$ and $\chi(G) \leq 3\cdot 2^{d-1}$. 
			Finally, as a preliminary attempt, we extend the Discharging method to $\mathbb{R}^d$ and investigate the coloring problem for $(d-2)$-faces in $\mathbb{R}^d$.
			
			
		\end{abstract}

		\begin{keyword}
			embedding \sep the Hadwiger conjecture \sep homotopy group \sep minor \sep Discharging 
		\end{keyword}
		
		
		

	\end{frontmatter}
	\noindent\textbf{MSC (2020):} 05C10
	

	
	\section{Introduction}
	Throughout this paper, all graphs are assumed to be finite, connected, and simple. 
	For definitions not explicitly provided in this paper, we refer the reader to~\cite{armstrong2013basic,bondy2008graph,hatcher2002algebraic}.
	
	\subsection{Motivation}
	
	The Hadwiger conjecture states that if $G$ is loopless and has no $K_t$ minor, then its chromatic number satisfies $\chi(G) < t$. For $\mathbb{R}^2$, the Four Color Theorem and Wagner’s Theorem are equivalent to the case $t=5$ of the Hadwiger conjecture \cite{bondy2008graph}. 
	For $t \geq 6$, one may ask whether the Hadwiger conjecture can be approached via the following two problems. 
	
	\begin{problem}[embedding problem]\label{problem1}
		Let $G$ be a graph, and define a dimensional-raising function $U^{d-1}(G)$ that transforms $G$ into a $(d-1)$-dimensional CW complex $U^{d-1}(G)$. 
		Can one characterize necessary and sufficient conditions under which $U^{d-1}(G)$ is embeddable in the $d$-dimensional Euclidean space $\mathbb{R}^{d}$ (When $d=2$, one has Wagner’s Theorem and $t=d+3=5$)?
		At a minimum, such a characterization should imply that $G$ contains no $K_{d+3}$-minor.
	\end{problem}
	
	\begin{problem}[coloring problem]\label{problem2}
		Let $G$ be a graph, and define a dimensional-raising function $U^{d-1}(G)$ that transforms $G$ into a $(d-1)$-dimensional CW complex $U^{d-1}(G)$. 
		If $U^{d-1}(G)$ is embeddable in $\mathbb{R}^{d}$, can one derive an upper bound for the chromatic number of $G$?
		More specifically, can one prove that $\chi(G) \leq d+2$?
	\end{problem}
	
	In this paper, we settle Problem \ref{problem1} affirmatively, as stated in Theorem \ref{anti-minor}. 
	
	
	Many researchers have previously regarded the $1$-skeleton of a polytope or a CW complex as a graph and studied its related properties. The reason they did not discover the connection between higher-dimensional embeddings and the Hadwiger conjecture is that they imposed no restrictions on homotopy groups. 
	In contrast, we adopt an opposite approach. 
	Starting from an arbitrary graph $G$, we construct, in a prescribed manner, a $(d-1)$-dimensional CW complex $U^{d-1}(G)$ while keeping the $1$-skeleton unchanged. 
	We then prove that, for every $i \in \{1,2,\dots,d-2\}$, the $i$-th homotopy group of $U^{d-1}(G)$ is trivial. 
	Finally, we establish necessary and sufficient conditions for the embeddability of $U^{d-1}(G)$ into $\mathbb{R}^d$,  
	and then using the correspondence between $U^{d-1}(G)$ and the graph $G$, we reveal a connection between the Hadwiger conjecture and higher-dimensional embeddings.

	\subsection{Background}
	
	We begin by presenting the relevant background of the Hadwiger conjecture, briefly reviewing the difficulties encountered by previous researchers, and outlining the central idea of graph embeddings in higher-dimensional spaces. 
	
	It is easy to see that the Hadwiger conjecture is trivial for $t=2,3$, and Hadwiger himself proved the case $t=4$ \cite{diestel2025graph}.  
	In 1937, Wagner established that a finite graph is planar if and only if it contains neither $K_5$ nor $K_{3,3}$ as a minor \cite{bondy2008graph}; based on this result, he showed that the Four Color Theorem is equivalent to the Hadwiger conjecture when $t=5$.  
	In 1993, Robertson, Seymour, and Thomas \cite{robertson1993hadwiger} proved the conjecture for $t=6$.  
	For the case $t \geq 7$, some partial results are known. In the 1980s, Kostochka \cite{kostochka1984lower} and Thomason \cite{thomason1984extremal} independently proved that every graph with no $K_t$ minor has average degree $O(t \sqrt{\log t})$, and thus can be colored with $O(t \sqrt{\log t})$ colors. Subsequent improvements of this bound have led to proofs that such graphs are $O(t \log^{\beta} t)$-colorable (with $\beta > \tfrac{1}{4}$) \cite{norin2019breaking} and even $O(t \log \log t)$-colorable \cite{delcourt2025reducing}.

	When $t=5$, the Hadwiger conjecture can be decomposed into a planar embedding problem and a coloring problem. 
	For the case $t \geq 6$, we also aim to adopt a similar approach, decomposing the Hadwiger conjecture into the problem of graph embedding in $\mathbb{R}^d$ and the problem of graph coloring in $\mathbb{R}^d$. 
	Actually, it is not feasible to directly study the embedding of graphs into high-dimensional Euclidean spaces. 
	Cohen et al. \cite{cohen1997three} proved in 1995 that every finite graph embeds into $\mathbb{R}^3$. Since graphs are $1$-dimensional CW complexes, this result implies that studying graph embeddings into $\mathbb{R}^4$ or higher dimensions is trivial. In fact, Fritsch \cite{fritsch1990cellular} proved in 1993 a more general result concerning the embedding of CW complexes into higher-dimensional spaces, stating that every countable and locally compact CW complex of dimension $m$ can be embedded in $\mathbb{R}^{2m+1}$.


	In other words, in order to study the embedding of graphs into Euclidean spaces of dimension greater than two, it is necessary to geometrize the graphs, that is, to increase their dimension through certain mathematical constructions.
	
	A natural idea for finding a high-dimensional analogue of planar graphs in Euclidean spaces is to regard graphs as the skeletons of CW complexes, where the dimension of the CW complex determines the dimension of the graph. For example, a $3$-connected planar graph can be regarded as the $1$-skeleton of a polyhedron. 
	
	Based on this idea, one may naturally generalize the concept of planar graphs to higher dimensions. Concerning the embeddability of graphs in higher-dimensional spaces, some results are already known. For instance, when $d=3$, Carmesin \cite{carmesin2023embedding} characterized the embeddability of simply connected $2$-dimensional simplicial complexes in $3$-space, in a way analogous to Kuratowski’s (Wagner’s) characterization of graph planarity, by nine excluded minors. 
	
	Although the above approach provides a way to extend planar graphs into higher dimensions, it also introduces two major difficulties below that prevent us from establishing a connection between high-dimensional embeddability and the Hadwiger conjecture.
	
	\begin{itemize}
		\item Due to the highly intricate structure of high-dimensional manifolds, we notice that as the dimension increases, the number of forbidden minors grows rapidly. For example, in $\mathbb{R}^2$, the set of forbidden minors consists only of $K_5$ and $K_{3,3}$, whereas in $\mathbb{R}^3$ it already increases to nine distinct cases. 
		\item The upper bound theorem \cite{stanley2017upper} states that cyclic polytopes have the largest possible number of faces among all convex polytopes with a fixed dimension and number of vertices. In other words, there exist $d$-dimensional ($d> 3$) polytopes with $n$ vertices whose $1$-skeleton is the complete graph $K_n$, where $n$ can be arbitrarily large. This implies that the chromatic number is completely unrelated to the dimension, which contradicts our original motivation.
	\end{itemize}
	
	
	Therefore, to relate high-dimensional embeddings to the Hadwiger conjecture, one first needs a procedure that increases the dimension of a graph while preserving its $1$-skeleton. The next step is to identify necessary and sufficient conditions for when the resulting higher-dimensional graph admits an embedding into $\mathbb{R}^d$. Finally, an upper bound for the chromatic number of this class of graphs allows this embedding theory to be linked directly to the Hadwiger conjecture.


	
	\subsection{Geometrization of graphs}
	
	Note that standard definitions and notations not explicitly stated here follow \cite{armstrong2013basic,bondy2008graph,hatcher2002algebraic}, such as skeleton, homotopy, $i$-ball, $i$-sphere, $i$-chain, fundamental groups, and the $n$-th homotopy group $\pi_n(X)$. Note: A topological space $X$ is $n$-connected if $\pi_i(X) = 0$ for all $i \leq n$. In contrast, a graph is $k$-connected if its vertex connectivity is at least $k$. These represent fundamentally distinct concepts. To avoid ambiguity, we will explicitly specify whether $n$-connected refers to a space or a graph in all subsequent usage. 
	
	In Section \ref{geo}, we introduce a dimension-raising function $U^{d-1}(G)$ that lifts a graph $G$ into $d-1$ dimensions. 
	
	For example, to increase the dimension of a $2$-connected graph $G$ from one to two without changing its $1$-skeleton, a natural idea is to realize certain cycles of $G$ as the boundaries of two-dimensional disks. The choice of  cycles determines the structural complexity of the resulting higher-dimensional graph. 
	
	We now introduce the notions of induced cycles and chordless cycles. The essential difference between them is that deleting an induced cycle from $G$ preserves connectivity, whereas deleting a chordless cycle does not necessarily do so. We emphasize that the definitions given below may differ from those found in some textbooks; throughout this paper, we adhere to the definitions stated here.

	\begin{definition}[induced cycle]
		Let $G = (V,E)$ be a graph. A cycle $C$ in $G$ is called an \emph{induced cycle} if the subgraph of $G$ induced by the vertex set $V(C)$ is exactly $C$, and $G \backslash C$ is connected. 
	\end{definition}
	
	\begin{definition}[chordless cycle]
		Let $G = (V,E)$ be a graph. A cycle $C$ in $G$ is called a \emph{chordless cycle} if the subgraph of $G$ induced by the vertex set $V(C)$ is exactly $C$. 
	\end{definition}
	
	\begin{figure}[htbp]
		\centering
		\begin{subfigure}[t]{0.45\textwidth}
			\centering
			\tdplotsetmaincoords{70}{110}
			\begin{tikzpicture}[tdplot_main_coords, scale=2, line join=round, line cap=round]
				\coordinate (x1) at (1,0,0);
				\coordinate (x2) at (-1,0,0);
				\coordinate (x3) at (0,1,0);
				\coordinate (x4) at (0,-1,0);
				\coordinate (x5) at (0,0,1);
				\coordinate (x6) at (0,0,-1);
				
				\draw[fill=blue!20,opacity=0.7] (x1)--(x3)--(x5)--cycle;
				\draw[fill=blue!20,opacity=0.7] (x3)--(x2)--(x5)--cycle;
				\draw[fill=blue!20,opacity=0.7] (x2)--(x4)--(x5)--cycle;
				\draw[fill=blue!20,opacity=0.7] (x4)--(x1)--(x5)--cycle;
				
				\draw[fill=blue!10,opacity=0.6] (x1)--(x3)--(x6)--cycle;
				\draw[fill=blue!10,opacity=0.6] (x3)--(x2)--(x6)--cycle;
				\draw[fill=blue!10,opacity=0.6] (x2)--(x4)--(x6)--cycle;
				\draw[fill=blue!10,opacity=0.6] (x4)--(x1)--(x6)--cycle;
				
				\node[anchor=west] at (x1) {$x_1$};
				\node[anchor=east] at (x2) {$x_2$};
				\node[anchor=south] at (x3) {$x_3$};
				\node[anchor=north] at (x4) {$x_4$};
				\node[anchor=south east] at (x5) {$x_5$};
				\node[anchor=north west] at (x6) {$x_6$};
			\end{tikzpicture}
			\caption{Octahedron $P_1$ with faces $x_1x_3x_5$, $x_1x_4x_5$, $x_1x_3x_6$, $x_1x_4x_6$, $x_2x_3x_5$, $x_2x_4x_5$, $x_2x_3x_6$, and $x_2x_4x_6$.}
			\label{fig:p1}
		\end{subfigure}
		\hfill
		\begin{subfigure}[t]{0.45\textwidth}
			\centering
			\tdplotsetmaincoords{70}{110}
			\begin{tikzpicture}[tdplot_main_coords, scale=2, line join=round, line cap=round]
				\coordinate (x1) at (1,0,0);
				\coordinate (x2) at (-1,0,0);
				\coordinate (x3) at (0,1,0);
				\coordinate (x4) at (0,-1,0);
				\coordinate (x5) at (0,0,1);
				\coordinate (x6) at (0,0,-1);
				
				\draw[fill=blue!20,opacity=0.7] (x1)--(x3)--(x5)--cycle;
				\draw[fill=blue!20,opacity=0.7] (x3)--(x2)--(x5)--cycle;
				\draw[fill=blue!20,opacity=0.7] (x2)--(x4)--(x5)--cycle;
				\draw[fill=blue!20,opacity=0.7] (x4)--(x1)--(x5)--cycle;
				
				\draw[fill=blue!10,opacity=0.6] (x1)--(x3)--(x6)--cycle;
				\draw[fill=blue!10,opacity=0.6] (x3)--(x2)--(x6)--cycle;
				\draw[fill=blue!10,opacity=0.6] (x2)--(x4)--(x6)--cycle;
				\draw[fill=blue!10,opacity=0.6] (x4)--(x1)--(x6)--cycle;
				
				\draw[fill=red!10,opacity=0.6] (x1)--(x3)--(x2)--(x4)--cycle;
				\draw[fill=yellow!10,opacity=0.6] (x1)--(x5)--(x2)--(x6)--cycle;
				\draw[fill=green!10,opacity=0.6] (x3)--(x5)--(x4)--(x6)--cycle;
				
				\draw[dashed] (x1)--(x2);
				\draw[dashed] (x3)--(x4);
				\draw[dashed] (x5)--(x6);
				
				\node[anchor=west] at (x1) {$x_1$};
				\node[anchor=east] at (x2) {$x_2$};
				\node[anchor=south] at (x3) {$x_3$};
				\node[anchor=north] at (x4) {$x_4$};
				\node[anchor=south east] at (x5) {$x_5$};
				\node[anchor=north west] at (x6) {$x_6$};
			\end{tikzpicture}
			\caption{Polyhedron $P_2$, obtained by adding three squares ($x_1x_3x_2x_4$, $x_1x_5x_2x_6$, $x_3x_5x_4x_6$) to $P_1$.}
			\label{fig:p2}
		\end{subfigure}
		
		\caption{The key difference between induced cycle and chordless cycle. }
		\label{fig:comparison}
	\end{figure}

	Consider the complete tripartite graph $K_{2,2,2}$. If each induced cycle of $K_{2,2,2}$ is filled in with a 2-dimensional disk, then the resulting complex is an octahedron, as shown in Figure~\ref{fig:p1}. If, instead, each chordless cycle of $K_{2,2,2}$ is filled in with a 2-dimensional disk, we obtain the complex depicted in Figure~\ref{fig:p2}. It is straightforward to verify that both constructions preserve the $1$-skeleton. Moreover, the complex in Figure~\ref{fig:p1} admits an embedding into $\mathbb{R}^3$, whereas the complex in Figure~\ref{fig:p2} does not admit an embedding into $\mathbb{R}^3$. 
	Since the ultimate goal of this paper is to study graph coloring problems, and the dimension-raising construction illustrated in Figure~\ref{fig:p1} yields a simpler resulting structure, it is more appropriate to employ induced cycles for increasing the dimension.
	
	To further increase the dimension of a graph $G$, one may generalize induced cycles to induced $i$-spheres and then regard each induced $i$-sphere as the boundary of an $(i+1)$-ball. It should be noted, however, that chordless cycles and their higher-dimensional analogues, namely chordless $i$-spheres, play a crucial role in our proofs; these notions will be discussed in detail in subsequent sections. 
	
	\begin{definition}[higher-dimensional chord]
		Let $G$ be a $(d-1)$-dimensional CW complex and let $S^i$ be an $i$-sphere in $G$. 
		A \emph{$k$-dimensional chord} of $S^i$ ($1 \le k \le i$) is a $k$-dimensional cell $B^k$ in $G$ such that
		\begin{itemize}
			\item $\partial B^k \subseteq S^i$, and
			\item $\operatorname{int}(B^k) \cap S^i = \emptyset$.
		\end{itemize}
	\end{definition}
	
	\begin{definition}[induced $i$-sphere]\label{induced}
		An induced $i$-sphere $S^i$ of a $(d-1)$-dimensional CW complex $G$ is a sphere that is a subcomplex of $G$ with no higher-dimensional chords such that $G \backslash S^i$ is connected. It should be noted that an $1$-dimensional CW complex is equivalent to a simple graph. 
	\end{definition}
	
	\begin{definition}[chordless $i$-sphere]\label{chordless}
		A chordless $i$-sphere $S^i$ of a $(d-1)$-dimensional CW complex $G$ is a sphere that is a subcomplex of $G$ with no higher-dimensional chords. 
	\end{definition}

	The CW complex obtained from $G$ via this dimension-raising process is referred to as a $(d-1)$-dimensional topological hypergraph, which we define as follows.

	\begin{definition}[$i$-face of an $n$-dimensional polytope \cite{hatcher2002algebraic}]
		An $i$-face of an $n$-dimensional polytope is a face of the polytope that is itself an $i$-dimensional polytope, where $0 \leq i \leq n-1$. Specifically:
		\begin{itemize}
			\item A $0$-face is a vertex.
			\item A $1$-face is an edge.
			\item A $2$-face is a polygonal face.
			\item And so on, up to the $(n-1)$-face, which is a $(n-1)$-dimensional polytope.
		\end{itemize}
	\end{definition}
	
	\begin{definition}[{\it $(d-1)$-dimensional topological hypergraph}]\label{new-topological hypergraph}
		
		Let $T_1, T_2, \dots, T_m$ be $(d-1)$-dimensional convex polytopes (which are allowed to undergo topological transformations under homeomorphism, provided that their interiors do not intersect). We say $K= \bigcup \limits_{i = 1}^m T_i$ is a {\it $(d-1)$-dimensional topological hypergraph} if the following conditions hold (each $T_i$ can be regarded as the $(d-1)$-dimensional topological hyperedge of $K$): 
		
		\begin{itemize}
			\item If $T_i$ and $T_j$ are convex polytopes in $\{T_1, T_2, ...,T_m\}$ and $T' = T_i\cap T_j \neq \emptyset$, then $T' = T_i \cap T_j$ is the union of the common $i$-faces of $T_i$ and $T_j$ (with $i \leq d-2$). In other words, $T_i$ and $T_j$ may intersect only along their faces. 
			\item The $i$-th homotopy group of $K$ is trivial for all $i\in \{1,2,3,...,d-2\}$. 
			\item  For any $(d-2)$-face $J$ of $K$, there exist $T_i, T_j \in \{T_1, T_2, \ldots, T_m\}$ such that $J = T_i \cap T_j$. 
			A face $J$ satisfying this condition is called a \emph{non-pendant $i$-topological hyperedge}; otherwise, $J$ is called a \emph{pendant $i$-topological hyperedge}.
			
		\end{itemize}
		
	\end{definition}
	
	Definition~\ref{new-topological hypergraph} can be interpreted either as a high-dimensional polytope or a CW complex. In this paper, however, we adopt a graph perspective to describe this object. In Section~\ref{geo}, we will present a procedure that transforms an arbitrary graph $G$ into such a CW complex.


	\begin{definition}[{\it induced topological sub-hypergraph}]
		\label{def:vertex-induced-topological-subhypergraph}
		
		Let $K = \bigcup_{i=1}^m T_i$ be a $(d-1)$-dimensional topological hypergraph as in Definition~\ref{new-topological hypergraph}.  
		Let $V(K)$ denote the set of all vertices of $K$, and let $V' \subseteq V(K)$ be a subset of vertices.
		
		Define
		\[
		\mathcal{T}(V') := \{\, T_i \in \{T_1, T_2, \dots, T_m\} \mid V(T_i) \subseteq V' \,\},
		\]
		where $V(T_i)$ denotes the set of vertices of the $(d-1)$-dimensional topological hyperedge $T_i$.
		
		The \emph{vertex-induced $(d-1)$-dimensional topological sub-hypergraph} of $K$ induced by $V'$, denoted by $K[V']$, is defined as
		\[
		K[V'] := \bigcup_{T_i \in \mathcal{T}(V')} T_i .
		\]
		
	\end{definition}
	
	According to Definitions~\ref{induced} and \ref{chordless}, we know that an induced (or chordless) $i$-sphere $S^i$ of a $(d-1)$-dimensional topological hypergraph $U$ is an induced subgraph of $U$. 
	
	If we further require that every $(d-1)$-dimensional topological hypergraph be homeomorphic to a $(d-1)$-simplex, we obtain a special class of topological hypergraphs. 
	In this setting, the term \emph{$d$-uniform} means that each $(d-1)$-dimensional topological hyperedge contains exactly $d$ vertices.
	
	\begin{definition}[$d$-uniform $(d-1)$-dimensional topological hypergraph]\label{uniform}
		Let $U$ be a $(d-1)$-dimensional topological hypergraph. If each $(d-1)$-dimensional topological hyperedge of $U$ is homeomorphic to a $(d-1)$-simplex, then $U$ is called the $d$-uniform $(d-1)$-dimensional topological hypergraph. 
	\end{definition}

	\subsection{Overview of the paper}
	
	\begin{figure}[htbp]
		
		\begin{tikzpicture}[
			node distance=1.2cm and 1.5cm,
			font=\sffamily\small,
			base/.style = {rectangle, rounded corners, draw=black, thick, minimum width=4cm, minimum height=1cm, align=center, drop shadow},
			def/.style = {base, fill=blue!10},
			lemma/.style = {base, fill=orange!10},
			proofstep/.style = {rectangle, draw=red!80, thick, fill=red!5, minimum width=3.5cm, minimum height=0.8cm, align=center},
			app/.style = {base, fill=green!10},
			line/.style = {draw, -Latex, thick},
			labeltext/.style = {fill=white, font=\footnotesize\itshape, text=gray}
			]
			
			\node[def] (conn) {1. Sphere Connectivity in $\mathbb{R}^d$};
			\node[def, below=0.6cm of conn] (s_decomp) {2. $S$-decomposition in $\mathbb{R}^d$};
			\node[def, below=0.6cm of s_decomp] (dim_func) {3. Dimensional-raising Function $U^x(G)$ \\ (Graph $\to$ CW complex)};
			\node[def, below=0.6cm of dim_func] (hyp_def) {4. Definition: $\mathbb{R}^d$-hypergraphs \\ \& Topological Properties};
			
			\node[lemma, right=2cm of conn] (bridges) {5. Lemma \ref{bridges}: Overlapping Bridges \\ (Skew or Equivalent)};
			\node[lemma, below=0.6cm of bridges] (sphere) {6. Lemma \ref{sphere}: Ear Decomposition \\ \& Neighbors on Common Sphere};
			\node[lemma, below=0.6cm of sphere] (comp_lemma) {7. Lemma \ref{10.34}: Embeddability $\iff$ \\ Marked $S$-components Embeddable};
			\node[lemma, below=0.6cm of comp_lemma] (contract) {8. Lemma \ref{connected}: Contraction $G/e$ \\ Preserves 2-sphere Connectivity};
			
			
			\node[draw=red!80, dashed, inner sep=15pt, fit={(contract) (bridges)}, label=above:\textbf{9. Proof of Theorem \ref{anti-minor} (Sufficiency)}] (proof_box) {};
			
			\node[proofstep, below=2cm of hyp_def] (p_start) {\textbf{Start Proof}\\ Theorem \ref{anti-minor} Necessity is Straightforward};
			
			\node[proofstep, below=0.8cm of p_start] (p_decomp) {Apply $U^x(G)$ \& $S$-decomposition};
			
			\node[proofstep, right=1.5cm of p_decomp] (p_lemma1034) {Apply Lemma \ref{10.34}:\\ Focus on Non-embeddable \\ marked $S$-component $G_0$};
			
			\node[proofstep, right=1.5cm of p_lemma1034] (p_induct) {Induction on Edges};
			
			\node[proofstep, below=1cm of p_induct] (p_contract) {Contract Edge $e=xy$ \\ (2-sphere connectivity)};
			
			\node[proofstep, left=1.5cm of p_contract] (p_analyze) {Structural Analysis \\ Neighbors on Sphere (Lemma \ref{sphere}) \\ Bridges Skew/Equiv (Lemma \ref{bridges})};
			
			\node[proofstep, left=1.5cm of p_analyze] (p_minors) {Identify Minors in $G_0$ \\ $\implies$ Minors in $G$};
			
			\node[proofstep, below=0.8cm of p_minors] (p_end) {\textbf{Conclusion}\\ Theorem \ref{anti-minor} Follows};
			
			\node[app, below=1cm of p_end, xshift=4cm] (apps) {10. Applications: \\ $\bullet$ Chromatic Number \\ $\bullet$ High-dimensional Discharging Method \\ $\bullet$ The Hadwiger Conjecture};
			
			
			\draw[line] (conn) -- (s_decomp);
			\draw[line] (s_decomp) -- (dim_func);
			\draw[line] (dim_func) -- (hyp_def);
			
			\draw[line] (hyp_def) -- (p_start);
			
			\draw[line] (p_start) -- (p_decomp);
			\draw[line] (p_decomp) -- (p_lemma1034);
			\draw[line] (p_lemma1034) -- (p_induct);
			\draw[line] (p_induct) -- (p_contract);
			\draw[line] (p_contract) -- (p_analyze);
			\draw[line] (p_analyze) -- (p_minors);
			\draw[line] (p_minors) -- (p_end);
			
			\draw[line, dashed, orange!80] (comp_lemma.west) to[out=180,in=90] (p_lemma1034.north);
			\draw[line, dashed, orange!80] (contract.east) to[out=0,in=90] (p_contract.north);
			\draw[line, dashed, orange!80] (sphere.east) -| (p_analyze.north);
			\draw[line, dashed, orange!80] (bridges.east) -| ($(p_analyze.north)+(0.5,0)$);
			
			\draw[line] (p_end) -- (apps);
			
			\begin{pgfonlayer}{background}
				\node[fill=red!5, rounded corners, fit=(p_start) (p_induct) (p_end) (p_minors), inner sep=10pt] {};
			\end{pgfonlayer}
			
		\end{tikzpicture}
		
		\caption{Proof outline.}
		\label{outline}
	\end{figure}
	
	As shown in Figure~\ref{outline}, our proof follows a strategy inspired by the proof of Wagner's theorem \cite{bondy2008graph}. 
	Within this framework, we prove Theorem~\ref{anti-minor}.

	In Section \ref{geo}, we give the definition of $U^{d-1}(G)$, and prove $U^{d-1}(G)$ is a $(d-1)$-dimensional topological hypergraph. 
	In Section \ref{section3}, we define a class of $(d-1)$-dimensional topological hypergraph that can embed into $\mathbb{R}^d$, which can be regarded as the high-dimensional analogue of planar graphs; we call such structures $\mathbb{R}^d$-hypergraphs. Subsequently, by analogy with planar graphs, we establish definitions and theorems concerning bridges (Sections \ref{section-Br}), ear decomposition (Section \ref{section-ED}), $S$-component and connectivity (Section \ref{connectivity}) in $\mathbb{R}^d$. In Section \ref{recognizing}, we establish two main results, and reformulate the Hadwiger conjecture into problems of high-dimensional embedding and high-dimensional coloring. 
	

	The following theorems are main results of this paper. If a complete $p$-partite graph has exactly $q$ vertices in each part, then we denote it by $K_{q \otimes p}$. For example, the graph $K_{2,2,2,2}$ can be written as $K_{2 \otimes 4}$, and $K_{1,2,2,2}$ can be written as $K_{1,2 \otimes 3}$.
	And the symbol $\wedge$ denotes the join of graphs, which is defined as follows. 
	
	\begin{definition}[join of graphs]
		Let $G_1 = (V_1, E_1)$ and $G_2 = (V_2, E_2)$ be two vertex-disjoint graphs.  
		The \emph{join} of $G_1$ and $G_2$, denoted by $G_1 \wedge G_2$, 
		is the graph with vertex set
		\[
		V(G_1 \wedge G_2) = V_1 \cup V_2
		\]
		and edge set
		\[
		E(G_1 \wedge G_2) = E_1 \cup E_2 \cup \{ uv \mid u \in V_1,\ v \in V_2 \}.
		\]
		That is, $G_1 \wedge G_2$ is obtained from the disjoint union of $G_1$ and $G_2$
		by adding all possible edges between every vertex of $G_1$ and every vertex of $G_2$.
	\end{definition}

	\begin{definition}[union of graphs]
		Let $G_1=(V_1,E_1)$ and $G_2=(V_2,E_2)$ be graphs.  
		The \emph{union} of $G_1$ and $G_2$ is the graph
		\[
		G_1 \cup G_2 := (V_1 \cup V_2,\; E_1 \cup E_2).
		\]
	\end{definition}

	\begin{definition}[complement graph]
		Let $G=(V,E)$ be a simple graph.  
		The \emph{complement} of $G$, denoted by $\overline{G}$, is the graph with vertex set $V$ such that two distinct vertices $u,v \in V$ are adjacent in $\overline{G}$ if and only if they are not adjacent in $G$, that is,
		\[
		E(\overline{G})=\bigl\{\{u,v\}\subseteq V : u\neq v \text{ and } \{u,v\}\notin E(G)\bigr\}.
		\]
	\end{definition}

	
	\begin{theorem}\label{anti-minor}
		Let $G$ be a graph and $U^i(G)$ denote the dimension-raising function of $G$ (Definition~\ref{dimension-raising}). Then $U^{d-1}(G)$ embeds into $\mathbb{R}^d$ if and only if: 
		\begin{itemize}
			\item $G$ does not contain $K_6$ and $K_{3,4}$ as a minor for $d=3$; 
			\item $G$ does not contain $K_7$, $K_{4,4}$ and $\overline{K_3} \wedge (K_1 \cup K_4)$ as a minor for $d=4$; 
			\item $G$ does not contain one of the graphs in Table \ref{minor} as a minor for $d\geq 5$. 
		\end{itemize}
		
		\begin{table}[htbp]
			\centering
			\renewcommand{\arraystretch}{1.4}
			\begin{tabular}{|c|p{11cm}|}
				\hline
				\textbf{Dimension $d\ge 5$} & \textbf{Minors} \\
				\hline
				
				$d=4k$ or $d=4k+2$
				&
				$
				\begin{aligned}
					&K_{d+3},\quad
					K_{4,4}\wedge K_{d-4},\quad
					K_{4,2\otimes \frac{d}{2}},\\
					&\overline{K_3}\wedge (K_1 \cup K_d),\quad
					\overline{K_4}\wedge (K_1 \cup K_{d-1}),\\
					&(K_1\cup K_{j-1})\wedge (K_1\cup K_{d+3-j}) \ \text{if $j\ge 5$, $d-j\geq 1$},\\
					&\overline{K_4}\wedge K_{j-4}\wedge (K_1\cup K_{d+3-j}) \ \text{if $j\ge 4$, $d-j\geq 1$},\\
					&\left\{
					\begin{aligned}
						K_{2\otimes \frac{j}{2}}\wedge (K_1\cup K_{d+3-j})
						&\ \text{if $j$ even, $j\ge 6$, $d-j \ge 1$},\\
						K_{1,2\otimes \frac{j-1}{2}}\wedge (K_1\cup K_{d+3-j})
						&\ \text{if $j$ odd, $j\ge 7$, $d-j \ge 1$}.
					\end{aligned}
					\right.
				\end{aligned}
				$
				\\
				\hline
				
				$d=4k+1$
				&
				$
				\begin{aligned}
					&K_{d+3},\quad
					K_{4,4}\wedge K_{d-4},\quad
					K_{4,1,2\otimes \frac{d-1}{2}},\\
					&K_{3,2\otimes \frac{d+1}{2}},\quad
					\overline{K_3}\wedge (K_1 \cup K_d),\\
					&\overline{K_4}\wedge (K_1 \cup K_{d-1}),\\
					&(K_1\cup K_{j-1})\wedge (K_1\cup K_{d+3-j}) \ \text{if $j\ge 5$, $d-j\geq 1$},\\
					&\overline{K_4}\wedge K_{j-4}\wedge (K_1\cup K_{d+3-j}) \ \text{if $j\ge 4$, $d-j\geq 1$},\\
					&\left\{
					\begin{aligned}
						K_{2\otimes \frac{j}{2}}\wedge (K_1\cup K_{d+3-j})
						&\ \text{if $j$ even, $j\ge 6$, $d-j \ge 1$},\\
						K_{1,2\otimes \frac{j-1}{2}}\wedge (K_1\cup K_{d+3-j})
						&\ \text{if $j$ odd, $j\ge 7$, $d-j \ge 1$}.
					\end{aligned}
					\right.
				\end{aligned}
				$
				\\
				\hline
				
				$d=4k+3$
				&
				$
				\begin{aligned}
					&K_{d+3},\quad
					K_{4,4}\wedge K_{d-4},\quad
					K_{4,1,2\otimes \frac{d-1}{2}},\\
					&K_{3,2\otimes \frac{d+1}{2}},\quad
					\overline{K_3}\wedge (K_1 \cup K_d),\\
					&\overline{K_4}\wedge (K_1 \cup K_{d-1}),\\
					&(K_1\cup K_{j-1})\wedge (K_1\cup K_{d+3-j}) \ \text{if $j\ge 5$, $d-j\geq 1$},\\
					&\overline{K_4}\wedge K_{j-4}\wedge (K_1\cup K_{d+3-j}) \ \text{if $j\ge 4$, $d-j\geq 1$},\\
					&\left\{
					\begin{aligned}
						K_{2\otimes \frac{j}{2}}\wedge (K_1\cup K_{d+3-j})
						&\ \text{if $j$ even, $j\ge 6$, $d-j \ge 1$},\\
						K_{1,2\otimes \frac{j-1}{2}}\wedge (K_1\cup K_{d+3-j})
						&\ \text{if $j$ odd, $j\ge 7$, $d-j \ge 1$}.
					\end{aligned}
					\right.
				\end{aligned}
				$
				\\
				\hline
			\end{tabular}
			\caption{Minors: either it is the complete graph $K_{d+3}$, or it is a supergraph of the complete bipartite graph 
				$K_{i,\, d+4-i}$, where $i \in \{2, 3, \dots, \lfloor \tfrac{d+4}{2} \rfloor \}$.}
			\label{minor}
		\end{table}

	\end{theorem}
	
	In the higher-dimensional setting, discussing the necessary and sufficient conditions for embeddability requires considerable exposition, since the structure of higher-dimensional spaces is far more complex than that of the plane. Fortunately, we observe that in order to establish the connection between the Hadwiger Conjecture and higher-dimensional embeddings, it suffices to use only the sufficient condition. Moreover, every $(d-1)$-dimensional topological hypergraph that is not embeddable in $\mathbb{R}^d$ necessarily contains two very simple classes of $1$-skeletons. Therefore, in what follows, Theorem~\ref{anti-minor} immediately implies Corollary~\ref{anti-minor2}. 
	
	\begin{corollary}\label{anti-minor2}
		Let $G$ be a graph and $U^i(G)$ denote the dimension-raising function of $G$ (Definition~\ref{dimension-raising}). Then $U^{d-1}(G)$ embeds into $\mathbb{R}^d$ if $G$ does not contain both $K_{d+3}$ and $K_{i, d+4-i}$ ($i \in \{2, 3, \dots, \lfloor \frac{d+4}{2} \rfloor \}$) as a minor. 
		%
		
	\end{corollary}

	
	In this paper, we do not pursue a detailed study of the coloring problem. 
	Instead, we provide only a coarse estimate of the upper bound on the chromatic number, leading to the following theorem. 
	We conjecture that the chromatic number of $G$ satisfies $\chi(G) \leq d+2$.
	
	\begin{theorem}\label{chromatic-number}
		Let $G$ be a graph and $U^i(G)$ denote the dimension-raising function of $G$ (Definition~\ref{dimension-raising}). If $U^{d-1}(G)$ embeds into $\mathbb{R}^d$, then $\chi(G)\leq 3\cdot 2^{d-1}$. 
	\end{theorem}
	
	In Section \ref{discharging}, we extend the Discharging method, originally developed for studying coloring problems of planar graphs, to the setting of high-dimensional spaces. Using this approach, we prove that every $d$-uniform $\mathbb{R}^d$-hypergraph has $(d-2)$-dimensional chromatic number at most $d+3$ (which means the chromatic number of $(d-2)$-face). This result illustrates how classical techniques for planar graphs can be effectively generalized to higher-dimensional spaces. 
	Finally, in Section \ref{future}, we discuss our future work.


	
	\section{The geometrization of a graph}\label{geo}
	
	In this section, we introduce a dimension-raising procedure that transforms a graph $G$ into an $x$-dimensional CW complex $U^{x}(G)$. We further prove that the CW complex $U^{x}(G)$ obtained by this procedure is an $x$-dimensional topological hypergraph.

	\subsection{Higher-dimensional connectivity: sphere connectivity}
	
	In graph theory, vertex-connectivity is a fundamental notion that measures the robustness of a graph under vertex deletions.
	From a topological perspective, this notion depends only on the $1$-skeleton of the underlying space. 
	While such a description is adequate for graphs, it becomes insufficient when one attempts to capture the connectivity properties of higher-dimensional CW complexes.
	
	For a general CW complex, connectivity phenomena are not fully determined by its $1$-skeleton.
	Higher-dimensional cells may create multiple independent ways of linking lower-dimensional faces that are invisible at the graph level.
	In particular, two faces may appear weakly connected when viewed through the $1$-skeleton, while in fact being linked by several essentially different higher-dimensional spheres.
	This indicates that classical vertex-connectivity fails to reflect the true structural complexity of higher-dimensional spaces.
	
	Motivated by this observation, we seek a higher-dimensional analogue of vertex-connectivity that is sensitive to the full cell structure of a CW complex.
	Rather than counting internally disjoint paths, our approach counts independent $(d-1)$-spheres simultaneously containing a given pair of faces.
	This leads naturally to a notion of \emph{sphere connectivity}, which extends the classical concept of connectivity from graphs to higher-dimensional CW complexes and provides a framework for measuring connectivity beyond the $1$-skeleton.

	We first reinterpret vertex-connectivity in graph theory from a topological viewpoint.
	Let $G$ be a graph and let $u,v \in V(G)$.  
	A classical result states that $G$ is $k$-connected if and only if between any two distinct vertices $u$ and $v$ there exist $k$ internally vertex-disjoint $u$--$v$ paths.
	Each pair of such paths determines a $1$-sphere (a simple cycle) containing $\{u,v\}$.
	
	Let $\mathbb{S}$ denote the collection of all $1$-spheres in $G$ that contain both $u$ and $v$.
	We say that a subcollection $\mathbb{S}' \subseteq \mathbb{S}$ is \emph{linearly independent} if it satisfies:
	\begin{itemize}
		\item For every $S \in \mathbb{S}'$,
		\[
		S \nsubseteq \bigcup_{S_i \in \mathbb{S}' \setminus \{S\}} S_i ;
		\]
		\item For any two distinct spheres $S_i, S_j \in \mathbb{S}'$,
		\[
		\{u,v\} \subseteq S_i \cap S_j .
		\]
	\end{itemize}
	A maximal linearly independent subcollection of $\mathbb{S}$ is called a \emph{basis} of $\mathbb{S}$.
	In particular, a $3$-connected graph admits a basis consisting of two $1$-spheres for any pair of vertices.
	
	\medskip
	
	We now generalize this idea to higher dimensions.
	Let $U$ be a $(d-1)$-dimensional topological hypergraph, and let $u,v$ be two $i$-topological hyperedges of $U$ with $i \le d-2$
	(in particular, when $i=0$, $u$ and $v$ are vertices).
	Let $\mathbb{S}$ be a collection of $(d-1)$-spheres in $U$ such that each sphere $S \in \mathbb{S}$ contains both $u$ and $v$.
	
	A subcollection $\mathbb{S}' \subseteq \mathbb{S}$ is said to be \emph{linearly independent} if:
	\begin{itemize}
		\item For every $S \in \mathbb{S}'$,
		\[
		S \nsubseteq \bigcup_{S_i \in \mathbb{S}' \setminus \{S\}} S_i ;
		\]
		\item For any two distinct spheres $S_i, S_j \in \mathbb{S}'$,
		$S_i \cap S_j$ is a $(d-1)$-ball and
		\[
		\{u,v\} \subseteq \partial ( S_i \cap S_j ),
		\]
		where $\partial ( S_i \cap S_j )$ denotes the boundary of the ball.
	\end{itemize}
	
	A maximal linearly independent subcollection $\mathbb{S}'$ is called a \emph{basis} of $\mathbb{S}$.
	The cardinality of such a basis is called the \emph{$(d-1)$-cosphere number} of $u$ and $v$.
	
	If for every pair of $i$-topological hyperedges $u,v$ of $U$,  the $(d-1)$-cosphere number is at least $k$,
	then $U$ is called \emph{$k$-sphere connected}.
	The largest integer $k$,  for which $U$ is $k$-sphere connected,  is called the
	\emph{$(d-1)$-sphere connectivity} of $U$, and is denoted by $\kappa^{d-1}(U)$.

	For clarity, we now separate the above construction into several basic notions. 
	We first define what it means for a collection of spheres connecting two fixed faces to be independent. 
	Based on this, we introduce the notions of a sphere basis and the associated cosphere number. 
	Finally, these local quantities are used to define a global connectivity invariant for a CW complex (topological hypergraph), called $(d-1)$-sphere connectivity.

	Let $U$ be a $(d-1)$-dimensional topological hypergraph. Let $u$ and $v$ be two distinct $i$-topological hyperedges of $U$, where $0 \le i \le d-2$. We denote by $\mathbb{S}(u,v)$ the set of all subcomplexes in $U$ that are homeomorphic to a $(d-1)$-sphere and contain both $u$ and $v$. 
	The symmetric difference appearing in Definition~\ref{independence} is taken in the sense of higher-dimensional complexes; for the relevant background we refer the reader to~\cite{hatcher2002algebraic}.

	\begin{definition}[sphere independence]\label{independence}
		A subset $\mathbb{S}' \subseteq \mathbb{S}(u,v)$ is said to be \textit{sphere independent} (or linearly independent) if it satisfies the following two conditions:
		\begin{itemize}
			\item \textbf{Independent:} No sphere in $\mathbb{S}'$ can be expressed as the symmetric difference of a finite number of other spheres in $\mathbb{S}'$. 
			\item \textbf{Geometric Intersection:} For any distinct pair $\{S_i, S_j\} \subseteq \mathbb{S}'$, their intersection $S_i \cap S_j$ is homeomorphic to a $(d-1)$-ball, and the faces $u, v$ are contained in the boundary of this intersection:
			\[
			S_i \cap S_j \cong B^{d-1} \quad \text{and} \quad \{u, v\} \subseteq \partial(S_i \cap S_j).
			\]
		\end{itemize}
	\end{definition}
	
	\begin{definition}[sphere basis and cosphere number]
		A \textit{sphere basis} of $\mathbb{S}(u,v)$ is defined as a maximal sphere independent subset $\mathbb{S}' \subseteq \mathbb{S}(u,v)$. The cardinality of such a basis, $|\mathbb{S}'|$, is defined as the \textit{$(d-1)$-cosphere number} of $u$ and $v$, denoted by $\nu^{d-1}(u,v)$.
	\end{definition}
	
	\begin{definition}[sphere connectivity]\label{sphere connectivity}
		The topological hypergraph $U$ is said to be \textit{$k$-sphere connected} if for every pair of $i$-topological hyperedges $u, v$ in $U$ (where $0 \le i \le d-2$), the $(d-1)$-cosphere number satisfies $\nu^{d-1}(u,v) \ge k$.
		
		The \textit{$(d-1)$-sphere connectivity} of $U$, denoted by $\kappa^{d-1}(U)$, is the largest integer $k$ such that $U$ is $k$-sphere connected:
		\[
		\kappa^{d-1}(U) = \min_{u,v \subset U} \{ \nu^{d-1}(u,v) \}
		\]
	\end{definition}
	
		%
	
	\begin{definition}[sphere cut]\label{sphere cut}
		Let $U$ be a $(d-1)$-dimensional topological hypergraph with $d \ge 3$.
		A subset of vertices $S \subseteq V(U)$ is called a \emph{$(d-2)$-dimensional $k$-sphere cut} of $U$ if it satisfies the following two conditions:
		\begin{itemize}
			\item \textbf{Disconnection:}
			Let $U - S$ denote the topological hypergraph obtained from $U$ by deleting all $i$-faces
			($0 \le i \le d-1$) that are incident to at least one vertex in $S$.
			Then $U - S$ is disconnected;
			
			\item \textbf{Spherical interface:}
			The induced sub-hypergraph $U[S]$ is a disjoint union of exactly $k$ $(d-2)$-spheres.
		\end{itemize}
	\end{definition}
	
	While classical notions of connectivity in topology (e.g., homotopical or homological connectivity) capture global topological properties based on the vanishing of homotopy or homology groups, they do not directly generalize the graph-theoretic concept of multiple disjoint connections between two faces. In contrast, the notion of $(d-1)$-sphere connectivity introduced above quantifies the independent geometric pathways between two faces using collections of $(d-1)$-spheres satisfying sphere independence conditions. \emph{To the best of our knowledge, this particular sphere-based connectivity invariant has not been previously formalized in the topology literature.}

	\subsection{High-dimensional $S$-component}
	
	Before introducing the higher-dimensional analogue, we briefly recall the notion of
	$S$-components and $S$-decompositions in graph theory.
	Given a vertex cut $S$ of a graph $G$, the idea is to decompose $G$ along $S$ into subgraphs,
	each consisting of $S$ together with one connected component of $G-S$.
	These subgraphs, called $S$-components, capture how $G$ is locally attached to the separating
	set $S$.
	
	When $G$ is $2$-connected and $S=\{x,y\}$ is a $2$-vertex cut, each $S$-component is augmented
	by adding an artificial edge between $x$ and $y$.
	This \emph{marker edge} records the role of the separating set and ensures that each component
	remains $2$-connected.
	The resulting marked $S$-components form the marked $S$-decomposition of $G$, from which the
	original graph can be recovered by deleting all marker edges.
	This decomposition plays a central role in structural results for planar graphs.

	\begin{definition}[$S$-component of a topological hypergraph]
		Let $U$ be a $1$-sphere connected $(d-1)$-dimensional topological hypergraph with $d \ge 3$.
		Let $S \subseteq V(U)$ be a set of $p$ vertices such that: 
		\begin{itemize}
			\item $S$ is a vertex cut of $U$, that is, $U - S$ is disconnected;
			\item the induced sub-hypergraph $U[S]$ is homeomorphic to a $(d-2)$-sphere or $(d-1)$-ball.
		\end{itemize}
		Let $X$ be the vertex set of a connected component of $U - S$.
		The sub-hypergraph $H$ of $U$ induced by $S \cup X$, namely
		\[
		H := U[S \cup X],
		\]
		is called an \emph{$S$-component} of $U$.
	\end{definition}
	
	\begin{definition}[marked $S$-component]
		Let $H$ be an $S$-component of a $(d-1)$-dimensional topological hypergraph $U$.
		If $U$ contains no $(d-1)$-topological hyperedge whose vertex set is exactly $S$, we form a new topological hypergraph $\widehat{H}$ by attaching a $(d-1)$-topological hyperedge $\sigma_S$ to $H$ such that: 
		\begin{itemize}
			\item the vertex set of $\sigma_S$ is $S$;
			\item the attaching map identifies $\partial \sigma_S$ with $U[S]$. 
		\end{itemize}
		The added $(d-1)$-topological hyperedge $\sigma_S$ is called a \emph{marker topological hyperedge}, and the resulting topological hypergraph
		$\widehat{H}$ is called a \emph{marked $S$-component} of $U$.
		If such a $(d-1)$-topological hyperedge already exists in $U$, we set $\widehat{H} := H$.
	\end{definition}
	
	\begin{definition}[marked $S$-decomposition]
		Let $U$ be a $(d-1)$-dimensional topological hypergraph and let $S \subseteq V(U)$ satisfy the conditions of an
		$S$-vertex cut. 
		The collection of all marked $S$-components $\widehat{H}$ arising from the connected components of
		$U - S$ is called the \emph{marked $S$-decomposition} of $U$. 
	\end{definition}
	
	\begin{remark}
		The original topological hypergraph $U$ can be recovered from its marked $S$-decomposition by taking the union
		of all marked $S$-components and subsequently deleting all marker cells. 
	\end{remark}

	\subsection{Dimensional-raising function}
	
	Before defining the dimension-raising function $U^x(G)$, we introduce the following notion. 
	
	For ease of exposition, we present an example in Figures~\ref{f1} - \ref{f3}, which provides a visual illustration of how the dimension of a graph $G$ can be raised via the dimensional-raising function and marked $S$-decomposition.

	\begin{definition}[$(i+1)$-sphere generating set]\label{sphere generating set}
		Let $U$ be an $i$-dimensional topological hypergraph, and let $V(S^{i+1}) \subseteq V(U)$.  
		We call $V(S^{i+1})$ an \emph{$(i+1)$-sphere generating set} of $U$ if it satisfies the following conditions.
		\begin{itemize}
			\item $V(S^{i+1})$ is minimal;
			
			\item The sub-hypergraph of $U$ induced by $V(S^{i+1})$ is an $\mathbb{R}^{i+1}$-hypergraph;
			
			\item After deleting the vertex set $V(S^{i+1})$ from $U$, the remaining part of $U$ is still connected;
			
			\item Let $\mathbb{S}^i(V(S^{i+1}))$ denote the collection of all induced $i$-spheres contained in $V(S^{i+1})$.  
			If each induced $i$-sphere in $\mathbb{S}^i(V(S^{i+1}))$ is filled with an $(i+1)$-ball, the resulting space is an $(i+1)$-sphere. (We refer to the above procedure of turning $U[V(S^{i+1})]$ into an $(i+1)$-sphere as a \emph{filling} of $U[V(S^{i+1})]$.)
		\end{itemize}
		
	\end{definition}

	\begin{definition}[dimension-raising function $U^{x}(G)$]\label{dimension-raising}
		Let $G$ be a graph, we assume that the graph $G$ has already undergone a marked $S$-decomposition with $|S|=2$, therefore, $G$ is a $3$-connected graph, and let $U^x(G)$ denote the \emph{dimension-raising function} of the graph $G$. This function transforms $G$ into a $x$-dimensional manifold as follows (note that $U^1(G)$ is equivalent to $G$): 
		
		Before performing Step~$i$, one must apply a marked $S$-decomposition to $U^i(G)$ so that every marked $S$-component is $2$-sphere connected. If some marked $S$-component is only $1$-sphere connected, then the procedure terminates for that component; in this case, the dimension of the marked $S$-component can be raised to at most $i$. 
		
		\begin{itemize}
			\item Choose any $2$-sphere generating set $V(S^2)$ in $U^1(G)$ and fill it to obtain a $2$-sphere $S^2$. Denote the resulting graph by $U^2(G)$. If the complex $U^2(G)$ contains a $1$-dimensional $1$-sphere cut $S$, we then perform a marked $S$-decomposition on $U^2(G)$. This procedure is repeated until $U^2(G)$ contains no $2$-sphere generating set. 
			
			\item Choose any $3$-sphere generating set $V(S^3)$ in $U^2(G)$ and fill it to obtain a $3$-sphere $S^3$. Denote the resulting graph by $U^3(G)$. If the complex $U^3(G)$ contains a $2$-dimensional $1$-sphere cut $S$, we then perform a marked $S$-decomposition on $U^3(G)$. This procedure is repeated until $U^3(G)$ contains no $3$-sphere generating set.
			
			\item $\dots$
			
			\item 
			\noindent
			\textbf{Step 1 (Filling).}
			Given the graph $U^{i}(G)$, choose any $(i+1)$-sphere generating set $V(S^{i+1}) \subseteq V(U^{i}(G))$. We fill $V(S^{i+1})$ to obtain an $(i+1)$-sphere $S^{i+1}$, and denote the resulting graph by $U^{i+1}(G)$.
			
			\noindent
			\textbf{Step 2 (Decomposition).}
			If the graph $U^{i+1}(G)$ contains an $i$-dimensional $1$-sphere cut $S$, then we perform a marked $S$-decomposition on $U^{i+1}(G)$.
			
			\noindent
			\textbf{Step 3 (Iteration).}
			Repeat Steps~1 and~2 on the updated graph until $U^{i+1}(G)$ contains no $(i+1)$-sphere generating set.
			
			\medskip
			\noindent
			This recursive process is applied successively for $i = 1, 2, \dots, x-1$, and terminates once the transformation from $U^{x-1}(G)$ to $U^{x}(G)$ is completed.

			\item $\dots$
			\item Choose any $x$-sphere generating set $V(S^x)$ in $U^{x-1}(G)$ and fill it to obtain a $x$-sphere $S^x$. Denote the resulting graph by $U^x(G)$. If the complex $U^x(G)$ contains a $(x-1)$-dimensional $1$-sphere cut $S$, we then perform a marked $S$-decomposition on $U^x(G)$. This procedure is repeated until $U^x(G)$ contains no $x$-sphere generating set.
		\end{itemize}
		
		Note that the above process of increasing the dimension cannot proceed indefinitely. The process terminates when there exists a unique induced $x$-sphere in $U^x(G)$. 
		The $i$-ball obtained in the above process is referred to as the $i$-dimensional topological hyperedge of $U^x(G)$. It follows that this hyperedge is homeomorphic to an $i$-dimensional convex polytope. 
		We refer to the above process as the geometrization of the graph $G$, and call $U^x(G)$ the $x$-dimensional topological hypergraph of $G$. 
		
	\end{definition}
	
	We do not claim uniqueness: Different choices of generating sets may lead to different decompositions.  
	Nevertheless, under any such decomposition, each marked $S$-component is either a $2$-sphere connected $x$-dimensional topological hypergraph or is homeomorphic to an $i$-sphere with $2 \le i \le x$.

	We provide a definition of the $d$-uniform $(d-1)$-dimensional topological hypergraph of $G$. 
	
	\begin{definition}[$d$-uniform $(d-1)$-dimensional topological hypergraph of $G$]\label{triangulated hypergraph}
		Let $G$ be a graph, and let $U^{d-1}(G)$ denote the $(d-1)$-dimensional topological hypergraph of $G$. If each $(d-1)$-dimensional topological hyperedge of $U^{d-1}(G)$ admits a triangulation such that,  after the triangulation, every $(d-1)$-dimensional topological hyperedge is homeomorphic to an $(d-1)$-simplex, then the resulting hypergraph is called the $d$-uniform $(d-1)$-dimensional topological hypergraph of $G$, denoted by $U^{d-1}_{\Delta}(G)$. 
		
		Note that the process of triangulating $U^i(G)$ to obtain a new manifold $U^i_{\Delta}(G)$ does not alter its embeddability in $(i+1)$-dimensional Euclidean space. 
	\end{definition}

	\begin{figure}[htbp]
		\centering
		
		\begin{tikzpicture}[
			scale=1.3, 
			>=Stealth,
			vertex/.style={circle, draw, fill=white, inner sep=0pt, minimum size=14pt, font=\small},
			marked_edge/.style={red, very thick}, 
			normal_edge/.style={thick}
			]
			
			\begin{scope}[shift={(0,0)}]
				\node[font=\bfseries] at (0, -2.5) {Original graph $G$.};
				
				\node[vertex] (v1) at (90:1)  {$v_1$};
				\node[vertex] (v2) at (30:1)  {$v_2$};
				\node[vertex] (v3) at (-30:1) {$v_3$};
				\node[vertex] (v4) at (-90:1) {$v_4$};
				\node[vertex] (v5) at (-150:1){$v_5$};
				\node[vertex] (v6) at (150:1) {$v_6$};
				\node[vertex] (v7) at (120:1) {$v_7$}; 
				\node[vertex] (v8) at (55:1.9)  {$v_8$};
				\node[vertex] (v9) at (30:2)  {$v_9$};
				\node[vertex] (v0) at (5:1.9)  {$v_{0}$};
				
				\draw[normal_edge] (v1)--(v2)--(v3)--(v4)--(v5)--(v6);
				\draw[normal_edge] (v1)--(v3) (v1)--(v4) (v1)--(v5);
				\draw[normal_edge] (v2)--(v4) (v2)--(v5) (v2)--(v6);
				\draw[normal_edge] (v3)--(v5) (v3)--(v6);
				\draw[normal_edge] (v4)--(v6);
				\draw[normal_edge] (v1)--(v7)--(v6); 
				
				\draw[normal_edge] (v8)--(v9)--(v0)--(v8);
				\draw[normal_edge] (v8)--(v1);
				\draw[normal_edge] (v9)--(v2);
				\draw[normal_edge] (v0)--(v3);
				\draw[normal_edge] (v8)--(v2)--(v0);
			\end{scope}
			
			\draw[->, very thick, gray] (2.2, 0) -- (4, 0) 
			node[midway, above, text=black, align=center, font=\footnotesize] {marked $\{v_1, v_6\}$-\\decomposition};
			
			
			\begin{scope}[shift={(6, 1)}]
				\node[font=\bfseries] at (-0.5, 1.5) {Marked $\{v_1, v_6\}$-component $G_1$.};
				
				\node[vertex] (c1_v1) at (90:1)  {$v_1$}; 
				\node[vertex] (c1_v6) at (150:1) {$v_2$}; 
				\node[vertex] (c1_v7) at (120:1) {$v_7$};
				
				\draw[normal_edge] (c1_v1)--(c1_v7)--(c1_v6);
				\draw[marked_edge] (c1_v1)--(c1_v6) node[midway, below left, red, font=\tiny] {};
			\end{scope}
			
			\begin{scope}[shift={(6, -1.5)}]
				\node[font=\bfseries] at (0, -1.5) {Marked $\{v_1, v_6\}$-component $G_2$.};
				
				\node[vertex] (c2_v1) at (90:1)  {$v_1$};
				\node[vertex] (c2_v2) at (30:1)  {$v_2$};
				\node[vertex] (c2_v3) at (-30:1) {$v_3$};
				\node[vertex] (c2_v4) at (-90:1) {$v_4$};
				\node[vertex] (c2_v5) at (-150:1){$v_5$};
				\node[vertex] (c2_v6) at (150:1) {$v_6$};
				\node[vertex] (c2_v8) at (55:1.9)  {$v_8$};
				\node[vertex] (c2_v9) at (30:2)  {$v_9$};
				\node[vertex] (c2_v0) at (5:1.9)  {$v_{0}$};
				
				\draw[normal_edge] (c2_v1)--(c2_v2)--(c2_v3)--(c2_v4)--(c2_v5)--(c2_v6);
				\draw[normal_edge] (c2_v1)--(c2_v3) (c2_v1)--(c2_v4) (c2_v1)--(c2_v5);
				\draw[normal_edge] (c2_v2)--(c2_v4) (c2_v2)--(c2_v5) (c2_v2)--(c2_v6);
				\draw[normal_edge] (c2_v3)--(c2_v5) (c2_v3)--(c2_v6);
				\draw[normal_edge] (c2_v4)--(c2_v6);
				
				\draw[normal_edge] (c2_v8)--(c2_v9)--(c2_v0)--(c2_v8);
				\draw[normal_edge] (c2_v8)--(c2_v1);
				\draw[normal_edge] (c2_v9)--(c2_v2);
				\draw[normal_edge] (c2_v0)--(c2_v3);
				\draw[normal_edge] (c2_v8)--(c2_v2)--(c2_v0);
				
				\draw[marked_edge] (c2_v1)--(c2_v6);
			\end{scope}
			
		\end{tikzpicture}
		
		\caption{The marked $\{v_1, v_6\}$-decomposition of $G$ is such that each marked $\{v_1, v_6\}$-component of $G$ is either a 3-connected graph or a graph that can be embedded in $\mathbb{S}^1$.}
		\label{f1}
	\end{figure}

	\begin{figure}[htbp]
		\centering
		\begin{tikzpicture}[
			scale=1.3, 
			>=Stealth,
			vertex/.style={circle, draw, fill=white, inner sep=0pt, minimum size=14pt, font=\small},
			marked_edge/.style={red, very thick},
			normal_edge/.style={thick},
			face/.style={fill=orange!40, fill opacity=0.6, draw=none},
			face_c1/.style={fill=cyan!40, fill opacity=0.6, draw=none},
			face_c2/.style={fill=magenta!40, fill opacity=0.6, draw=none},
			face_c3/.style={fill=yellow!40, fill opacity=0.6, draw=none}
			]
			
			\begin{scope}[shift={(0,0)}]
				\node[font=\bfseries] at (0, -2.5) {A $2$-sphere generating set in $G_2$.};
				
				\node[vertex] (v1) at (90:1)  {$v_1$};
				\node[vertex] (v2) at (30:1)  {$v_2$};
				\node[vertex] (v3) at (-30:1) {$v_3$};
				\node[vertex] (v4) at (-90:1) {$v_4$};
				\node[vertex] (v5) at (-150:1){$v_5$};
				\node[vertex] (v6) at (150:1) {$v_6$};
				\node[vertex] (v8) at (55:1.9)  {$v_8$};
				\node[vertex] (v9) at (30:2)  {$v_9$};
				\node[vertex] (v0) at (5:1.9)  {$v_{0}$};
				
				\draw[normal_edge] (v1)--(v2)--(v3)--(v4)--(v5)--(v6);
				\draw[normal_edge] (v1)--(v3) (v1)--(v4) (v1)--(v5);
				\draw[normal_edge] (v2)--(v4) (v2)--(v5) (v2)--(v6);
				\draw[normal_edge] (v3)--(v5) (v3)--(v6);
				\draw[normal_edge] (v4)--(v6);
				\draw[normal_edge] (v8)--(v9)--(v0)--(v8);
				\draw[normal_edge] (v8)--(v1);
				\draw[normal_edge] (v9)--(v2);
				\draw[normal_edge] (v0)--(v3);
				\draw[normal_edge] (v8)--(v2)--(v0);
				\draw[marked_edge] (v1)--(v6);
				
				\begin{scope}[on background layer]
					\fill[face] (v1.center)--(v2.center)--(v4.center)--cycle;
					\fill[face] (v1.center)--(v3.center)--(v4.center)--cycle;
					\fill[face] (v2.center)--(v3.center)--(v4.center)--cycle;
					
					\fill[face] (v1.center)--(v3.center)--(v0.center)--(v8.center)--cycle;
					
					\fill[face] (v2.center)--(v3.center)--(v0.center)--cycle;
					\fill[face] (v2.center)--(v8.center)--(v1.center)--cycle;
					
					\fill[face] (v9.center)--(v0.center)--(v2.center)--cycle;
					\fill[face] (v9.center)--(v0.center)--(v8.center)--cycle;
					\fill[face] (v9.center)--(v2.center)--(v8.center)--cycle;
				\end{scope}
			\end{scope}
			
			\draw[->, very thick, gray] (2.2, 0) -- (4, 0) 
			node[midway, above, text=black, align=center, font=\footnotesize] {marked $v_1v_2v_3$-\\decomposition};
			
			
			\begin{scope}[shift={(6, 1)}]
				\node[font=\bfseries] at (-0.5, 2.6) {Marked $v_1v_2v_3$-components $G_3$.};
				\node[font=\bfseries] at (-0.5, 2.1) {$G_3$ also admits a marked $v_0v_2v_8$-decomposition.};
				\node[vertex] (c1_v1) at (90:1)  {$v_1$};
				\node[vertex] (c1_v2) at (30:1)  {$v_2$};
				\node[vertex] (c1_v3) at (-30:1) {$v_3$};
				\node[vertex] (c1_v8) at (55:1.9)  {$v_8$};
				\node[vertex] (c1_v9) at (30:3)  {$v_9$}; 
				\node[vertex] (c1_v0) at (5:1.9)  {$v_{0}$};
				
				\node[vertex] (c3_v8) at (48:2.5)  {$v_8$};
				\node[vertex] (c3_v2) at (30:1.9)  {$v_2$};
				\node[vertex] (c3_v0) at (12:2.5)  {$v_{0}$};
				
				\draw[normal_edge] (c1_v1)--(c1_v2)--(c1_v3);
				\draw[normal_edge] (c1_v1)--(c1_v3);
				\draw[normal_edge] (c3_v8)--(c1_v9)--(c3_v0)--(c3_v8);
				\draw[normal_edge] (c1_v8)--(c1_v1);
				\draw[normal_edge] (c1_v9)--(c3_v2);
				\draw[normal_edge] (c1_v0)--(c1_v3);
				\draw[normal_edge] (c1_v8)--(c1_v2)--(c1_v0)--(c1_v8);
				\draw[normal_edge] (c3_v8)--(c3_v2)--(c3_v0);
				
				\draw[marked_edge] (c1_v1)--(c1_v2)--(c1_v3)--(c1_v1);
				\draw[marked_edge] (c1_v8)--(c1_v2)--(c1_v0)--(c1_v8);
				\draw[marked_edge] (c3_v8)--(c3_v2)--(c3_v0)--(c3_v8);
				
				\begin{scope}[on background layer]
					\fill[face_c1] (c1_v1.center)--(c1_v3.center)--(c1_v0.center)--(c1_v8.center)--cycle;
					\fill[face_c1] (c1_v2.center)--(c1_v1.center)--(c1_v3.center)--cycle;
					\fill[face_c1] (c1_v2.center)--(c1_v3.center)--(c1_v0.center)--cycle;
					\fill[face_c1] (c1_v2.center)--(c1_v0.center)--(c1_v8.center)--cycle;
					\fill[face_c1] (c1_v2.center)--(c1_v8.center)--(c1_v1.center)--cycle;
					
					\fill[face_c3] (c3_v0.center)--(c3_v2.center)--(c3_v8.center)--cycle;
					\fill[face_c3] (c3_v0.center)--(c3_v2.center)--(c1_v9.center)--cycle;
					\fill[face_c3] (c3_v0.center)--(c3_v8.center)--(c1_v9.center)--cycle;
					\fill[face_c3] (c3_v2.center)--(c3_v8.center)--(c1_v9.center)--cycle;
				\end{scope}
			\end{scope}
			
			\begin{scope}[shift={(6, -1.5)}]
				\node[font=\bfseries] at (0, -1.5) {Marked $v_1v_2v_3$-component $G_4$.};
				
				\node[vertex] (c2_v1) at (90:1)  {$v_1$};
				\node[vertex] (c2_v2) at (30:1)  {$v_2$};
				\node[vertex] (c2_v3) at (-30:1) {$v_3$};
				\node[vertex] (c2_v4) at (-90:1) {$v_4$};
				\node[vertex] (c2_v5) at (-150:1){$v_5$};
				\node[vertex] (c2_v6) at (150:1) {$v_6$};
				
				\draw[normal_edge] (c2_v1)--(c2_v2)--(c2_v3)--(c2_v4)--(c2_v5)--(c2_v6);
				\draw[normal_edge] (c2_v1)--(c2_v3) (c2_v1)--(c2_v4) (c2_v1)--(c2_v5);
				\draw[normal_edge] (c2_v2)--(c2_v4) (c2_v2)--(c2_v5) (c2_v2)--(c2_v6);
				\draw[normal_edge] (c2_v3)--(c2_v5) (c2_v3)--(c2_v6);
				\draw[normal_edge] (c2_v4)--(c2_v6);
				
				\draw[marked_edge] (c2_v1)--(c2_v6);
				\draw[marked_edge] (c2_v1)--(c2_v2)--(c2_v3)--(c2_v1);
				
				\begin{scope}[on background layer]
					\fill[face_c2] (c2_v1.center)--(c2_v2.center)--(c2_v4.center)--cycle;
					\fill[face_c2] (c2_v1.center)--(c2_v3.center)--(c2_v4.center)--cycle;
					\fill[face_c2] (c2_v2.center)--(c2_v3.center)--(c2_v4.center)--cycle;
					\fill[face_c2] (c2_v1.center)--(c2_v2.center)--(c2_v3.center)--cycle;
				\end{scope}
			\end{scope}
			
		\end{tikzpicture}
		\caption{We choose a $2$-sphere generating set in $G_2$ (Definition~\ref{sphere generating set}) and fill each induced sphere with a $2$-ball ($v_4v_1v_2$, $v_4v_1v_3$, $v_4v_2v_3$, $v_1v_3v_0v_8$, $v_2v_0v_3$, $v_2v_1v_8$, $v_9v_0v_2$, $v_9v_0v_8$, $v_9v_2v_8$). 
			For each $1$-dimensional $1$-sphere cut (Definition~\ref{sphere cut}), namely $v_1v_2v_3$ and $v_0v_2v_8$ in this example, 
			we apply the corresponding marked decomposition.
		}
		\label{f2}
	\end{figure}

	\begin{figure}[htbp]
		\centering
		
		\begin{subfigure}[b]{0.42\textwidth}
			\centering
			\begin{tikzpicture}[scale=1.5, every node/.style={circle, draw, minimum size=18pt, inner sep=1pt, fill=white}] 
				
				\node (v1) at (90:1)  {$v_1$};
				\node (v2) at (30:1)  {$v_2$};
				\node (v3) at (-30:1) {$v_3$};
				\node (v4) at (-90:1) {$v_4$};
				\node (v5) at (-150:1){$v_5$};
				\node (v6) at (150:1) {$v_6$};
				
				\begin{scope}[on background layer]
					\tikzstyle{face}=[opacity=0.3, draw=none]
					
					\fill[face, red]           (v1.center)--(v2.center)--(v3.center)--cycle;
					\fill[face, orange]        (v1.center)--(v2.center)--(v4.center)--cycle;
					\fill[face, yellow]        (v1.center)--(v2.center)--(v5.center)--cycle;
					\fill[face, green]         (v1.center)--(v2.center)--(v6.center)--cycle;
					\fill[face, cyan]          (v1.center)--(v3.center)--(v4.center)--cycle;
					\fill[face, blue]          (v1.center)--(v3.center)--(v5.center)--cycle;
					\fill[face, violet]        (v1.center)--(v3.center)--(v6.center)--cycle;
					\fill[face, magenta]       (v1.center)--(v4.center)--(v5.center)--cycle;
					\fill[face, brown]         (v1.center)--(v4.center)--(v6.center)--cycle;
					\fill[face, lime]          (v1.center)--(v5.center)--(v6.center)--cycle;
					
					\fill[face, pink]          (v2.center)--(v3.center)--(v4.center)--cycle;
					\fill[face, teal]          (v2.center)--(v3.center)--(v5.center)--cycle;
					\fill[face, olive]         (v2.center)--(v3.center)--(v6.center)--cycle;
					\fill[face, purple]        (v2.center)--(v4.center)--(v5.center)--cycle;
					\fill[face, lightgray]     (v2.center)--(v4.center)--(v6.center)--cycle;
					\fill[face, darkgray]      (v2.center)--(v5.center)--(v6.center)--cycle;
					
					\fill[face, red!50!blue]   (v3.center)--(v4.center)--(v5.center)--cycle;
					\fill[face, green!50!blue] (v3.center)--(v4.center)--(v6.center)--cycle;
					\fill[face, orange!50!red] (v3.center)--(v5.center)--(v6.center)--cycle;
					\fill[face, blue!50!cyan]  (v4.center)--(v5.center)--(v6.center)--cycle;
				\end{scope}
				
				\draw (v1)--(v2)--(v3)--(v4)--(v5)--(v6)--(v1);
				\draw (v1)--(v3) (v1)--(v4) (v1)--(v5);
				\draw (v2)--(v4) (v2)--(v5) (v2)--(v6);
				\draw (v3)--(v5) (v3)--(v6);
				\draw (v4)--(v6);
				
			\end{tikzpicture}
			\caption{$U^2(G_4)$ is isomorphic to the $2$-skeleton of the $5$-simplex.}
			\label{fig:left-graph}
		\end{subfigure}
		\hfill 
		%
		\raisebox{2cm}{ 
			\begin{tikzpicture}
				\draw[->, thin, gray] (0,0) -- (0.5,0);
			\end{tikzpicture}
		}
		\hfill 
		%
		\begin{subfigure}[b]{0.50\textwidth}
			\centering
			\tdplotsetmaincoords{70}{120}
			\begin{tikzpicture}[tdplot_main_coords, scale=2.8]
				
				\draw[->, gray] (0,0,0) -- (1.2,0,0) node[below left] {$x$};
				\draw[->, gray] (0,0,0) -- (0,1.2,0) node[below right] {$y$};
				\draw[->, gray] (0,0,0) -- (0,0,1.2) node[above] {$z$};
				
				\coordinate (v5) at (0,0,0);
				\coordinate (v1) at (1,0,0);
				\coordinate (v2) at (0,1,0);
				\coordinate (v3) at (0,0,1);
				\coordinate (v4) at (0.3,0.3,0.3);
				\coordinate (v6) at (0,1,0.5);
				
				\filldraw[fill=red!60, opacity=0.45, draw=black] (v5)--(v1)--(v2)--cycle;
				\filldraw[fill=blue!60, opacity=0.45, draw=black] (v5)--(v1)--(v3)--cycle;
				\filldraw[fill=green!60, opacity=0.45, draw=black] (v5)--(v1)--(v4)--cycle;
				\filldraw[fill=orange!70, opacity=0.45, draw=black] (v5)--(v2)--(v3)--cycle;
				\filldraw[fill=purple!60, opacity=0.45, draw=black] (v5)--(v2)--(v4)--cycle;
				\filldraw[fill=cyan!60, opacity=0.45, draw=black] (v5)--(v3)--(v4)--cycle;
				\filldraw[fill=yellow!70!brown, opacity=0.45, draw=black] (v1)--(v2)--(v3)--cycle;
				\filldraw[fill=teal!60, opacity=0.45, draw=black] (v1)--(v2)--(v4)--cycle;
				\filldraw[fill=magenta!60, opacity=0.45, draw=black] (v1)--(v3)--(v4)--cycle;
				\filldraw[fill=lime!60, opacity=0.45, draw=black] (v2)--(v3)--(v4)--cycle;
				
				\foreach \target in {v5, v1, v2, v3, v4} {
					\draw[thick, red, dashed] (v6) -- (\target);
				}
				
				\foreach \v/\name/\x/\y/\z in {
					v5/$v_5$/0/0/0,
					v1/$v_1$/1/0/0,
					v2/$v_2$/0/1/0,
					v3/$v_3$/0/0/1,
					v4/$v_4$/\frac{1}{3}/\frac{1}{3}/\frac{1}{3},
					v6/$v_6$/0/1/0.5
				}
				{
					\filldraw[black] (\v) circle (0.6pt)
					node[above right] {\name\ {\scriptsize $(\x,\y,\z)$}};
				}
			\end{tikzpicture}
			\caption{$U^2(G_4)$ is not embeddable in $\mathbb{R}^3$.}
			\label{fig:right-embedding}
		\end{subfigure}
		
		\caption{Applying the procedure in Figure~\ref{f2} to the graph $G_4$ yields $U^2(G_4)$, which is isomorphic to the $2$-skeleton of the $5$-simplex, and whose $2$-sphere connectivity (Definiton \ref{sphere connectivity}) equals $2$. Since $U^2(G_4)$ is not embeddable in $\mathbb{R}^3$ but is embeddable in $\mathbb{R}^4$ (Lemma \ref{lemma:flo}), Theorem~\ref{chromatic-number} implies that the chromatic number of $G$ is at most $24$.}
		\label{f3}
	\end{figure}
	

	It remains to verify that the complex constructed in Definition~\ref{dimension-raising} indeed forms a topological hypergraph in the sense of Definition~\ref{new-topological hypergraph}. Clearly, it suffices to show that the $i$-th homotopy group of $U^{d-1}(G)$ is trivial for every $i \in \{1,2,\dots,d-2\}$ (Lemma \ref{c1}).

	Similar to the edges in planar graphs, the complexes in the above definition are allowed to undergo continuous topological transformations such as contraction and stretching, provided that no internal intersections occur.


	\begin{lemma}\label{c1}
		The $i$-th homotopy group of $U^{d-1}(G)$ is trivial for all $i\in \{1,2,3,...,d-2\}$. 
	\end{lemma}
	
	\begin{proof}

		We prove the lemma by contradiction. Suppose there exists $i \in \{1, 2, 3, \dots, d-2\}$ such that the $i$-th homotopy group of $U^{d-1}(G)$ is nontrivial. Then there must exist a non-contractible $i$-sphere $S_1$ in $U^{d-1}(G)$ that satisfies the following conditions: 
		\begin{itemize}
			\item $S_1$ is composed of $i$-dimensional topological hyperedges;
			\item $S_1$ cannot be continuously contracted to a point;
			\item $U^{d-1}(G) \backslash S_1$ is disconnected;
			\item $S_1$ is minimal.
		\end{itemize}
		
		As shown in Figure \ref{huan}, $S_1$ must lie on the surface of a multiply connected region $M_1$, otherwise $S_1$ would not be minimal. Note that $U^{d-1}(G) \backslash S_1$ must be disconnected; otherwise, $S_1$ would be an induced sphere, which contradicts the assumption that $S_1$ is a non-contractible $i$-sphere.
		Let $M_2$ be a connected component of $U^{d-1}(G) \backslash S_1$ that is distinct from $M_1$. It follows that $M_2$ must also be a multiply connected region, otherwise $S_1$ could be contracted to a point through $M_2$.
		
		In $M_2$, take an $i$-sphere $S_2$ with the same properties as $S_1$. Let $M_3$ be a connected component of $U^{d-1}(G) \backslash S_2$ that is distinct from $M_2$. Then $M_3$ cannot contain any points from $M_1$, otherwise it would imply that $U^{d-1}(G) \backslash S_1$ is connected, leading to a contradiction since $S_1$ would be an induced $i$-sphere.
		
		In $M_3$, take an $i$-sphere $S_3$ with the same properties as $S_1$. Let $M_4$ be a connected component of $U^{d-1}(G) \backslash S_3$ that is distinct from $M_3$. Similarly, $M_4$ cannot contain any points from $M_1$ or $M_2$.
		
		Since this process can be repeated indefinitely, it follows that the graph $G$ must be an infinite graph, leading to a contradiction.
		
		Therefore, the assumption is false, and the lemma is proved.
		
		\begin{figure}
			\centering     
			\includegraphics[width=0.6\linewidth]{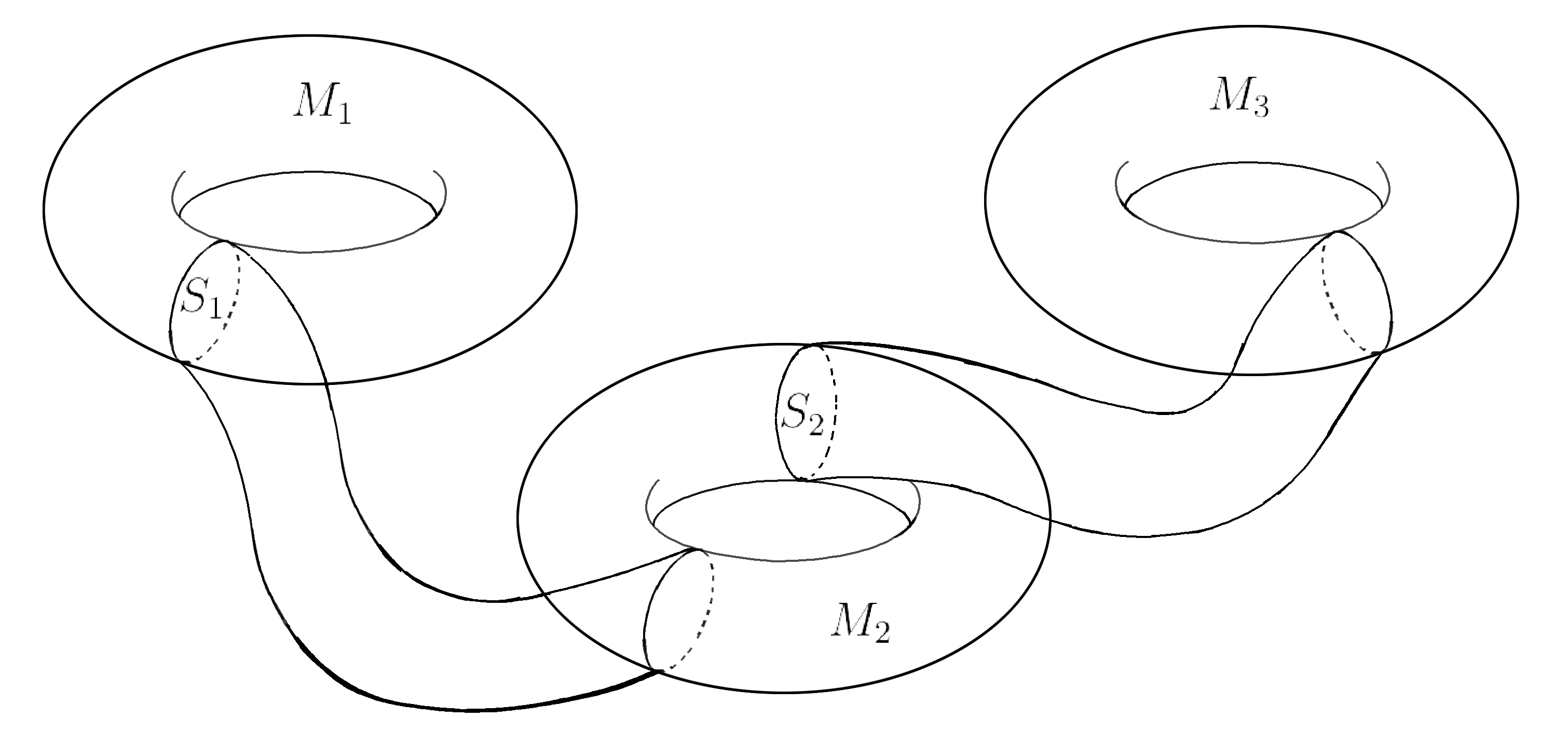}
			\caption{Infinite graph $G$.}
			\label{huan}
		\end{figure}
		
	\end{proof}
	
	\begin{corollary}
		$U^{d-1}(G)$ is a $(d-1)$-dimensional topological hypergraph. 
	\end{corollary}

	Next, our goal is to study the embeddability of $U^{d-1}(G)$ in $\mathbb{R}^{d}$.

	\section{$\mathbb{R}^d$-hypergraph: Definition and property}\label{section3}

	In this section, we generalize the notions of planar graphs, minors, incidence, adjacency, edge contraction, and related concepts to the setting of $\mathbb{R}^d$.

	\begin{definition}[{\it $\mathbb{R}^d$-hypergraph} and {\it non-$\mathbb{R}^d$-hypergraph}] \label{special0}
		If a {\it $(d-1)$-dimensional topological hypergraph} $K$ can be embedded in $\mathbb{R}^d$, then $K$ is called an {\it $\mathbb{R}^d$-hypergraph}; otherwise, $K$ is called a {\it non-$\mathbb{R}^d$-hypergraph}. 
	\end{definition}
	
	\begin{definition}[{\it $d$-uniform $\mathbb{R}^d$-hypergraph}] \label{special1}
		Let $K$ be an $\mathbb{R}^d$-hypergraph. If every $(d-1)$-dimensional topological hyperedge of $K$ is homeomorphic to a $(d-1)$-simplex, then $K$ is called a {\it $d$-uniform $\mathbb{R}^d$-hypergraph}.
	\end{definition}
	
	Note that a $d$-uniform $(d-1)$-dimensional topological hypergraph is not necessarily embeddable in $\mathbb{R}^d$, whereas a $d$-uniform $\mathbb{R}^d$-hypergraph can be embedded in $\mathbb{R}^d$. 
	Note that the $\mathbb{R}^d$-hypergraph can always be embedded in the $d$-sphere. 
	Since each $\mathbb{R}^d$-hypergraph is $(d-1)$-dimensional topological hypergraph, every definition concerning $(d-1)$-dimensional topological hypergraphs introduced in this paper also applies to $\mathbb{R}^d$-hypergraphs.


	

	In the process of triangulating a planar graph, it is necessary to add some edges. However, in higher dimensions, such operations become much less intuitive. It is easy to see that contracting simplices in an $\mathbb{R}^d$-hypergraph does not alter its homotopy groups. Nevertheless, when adding or removing simplices from an $\mathbb{R}^d$-hypergraph, one must impose certain constraints to ensure that the homotopy type is preserved. Therefore, we need to establish the following lemma.

	%
	%
		%
		%
	%
	%
	%
	%

	\begin{lemma}\label{attach}
		Let $X$ be a $d$-dimensional CW complex satisfying $\pi_k(X) = 0$ for all $k \leq d-1$. Let Y be a $d$-dimensional simplex and $X \cap Y \cong S^{d-1}$, i.e., it is homeomorphic to the $(d-1)$-dimensional sphere. 
		Then: $\pi_k(X \cup Y) = 0$ for all $k \leq d-1.$
		
	\end{lemma}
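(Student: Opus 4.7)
The plan is to recognize $X \cup Y$ as the space obtained from $X$ by attaching a single $d$-cell, and then to invoke the classical fact that such a cell attachment cannot create nontrivial homotopy below dimension $d$. First, I would observe that since $Y$ is a $d$-simplex, it is homeomorphic to the closed $d$-ball $B^d$ with $\partial Y \cong S^{d-1}$; the hypothesis $X \cap Y \cong S^{d-1}$ is then naturally identified with $\partial Y$, so that $X \cup Y$ is precisely $X \cup_{\iota} e^d$, where $\iota \colon S^{d-1} \hookrightarrow X$ is the attaching map given by the inclusion of the intersection sphere.

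Next, I would appeal to the standard result that, for such a cell attachment, the pair $(X \cup Y, X)$ is $(d-1)$-connected; that is, $\pi_k(X \cup Y, X) = 0$ for every $k < d$ (this is a direct consequence of cellular approximation; see, e.g., Hatcher, Corollary 4.12). The long exact sequence of the pair then yields, for each $k \leq d-1$,
\begin{equation*}
\pi_k(X) \longrightarrow \pi_k(X \cup Y) \longrightarrow \pi_k(X \cup Y, X) = 0,
\end{equation*}
so the map $\pi_k(X) \to \pi_k(X \cup Y)$ is surjective. Combined with the hypothesis $\pi_k(X) = 0$, this forces $\pi_k(X \cup Y) = 0$ for all $k \leq d-1$, which is exactly the conclusion.

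The step I expect to require the most care is the very first one: justifying rigorously that $X \cap Y$ can be taken to equal $\partial Y$ and that the union $X \cup Y$ inherits a CW structure extending the given one on $X$ by a single open $d$-cell. Within the setting of the paper, where topological hyperedges are glued along shared lower-dimensional faces, this identification is natural, but I would add a brief remark confirming that the attaching map $\iota$ is indeed (homotopic to) a cellular inclusion of the boundary sphere, so that Corollary 4.12 of Hatcher applies verbatim. As a backup route avoiding CW technicalities entirely, one could instead use van Kampen's theorem for $\pi_1$ together with Mayer--Vietoris and the Hurewicz theorem, exploiting the fact that $X$, $Y$, and $X \cap Y \cong S^{d-1}$ are all $(d-2)$-connected; but the cell-attachment viewpoint gives the cleanest argument.
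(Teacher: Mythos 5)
Your proof is correct and uses the same overall strategy as the paper: both identify $X \cup Y$ as $X$ with a $d$-cell attached, establish that $\pi_k(X \cup Y, X) = 0$ for $k \leq d-1$, and then read off the conclusion from the long exact sequence of the pair together with the hypothesis $\pi_k(X) = 0$. The only difference is in how the vanishing of the relative homotopy groups is justified: you invoke cellular approximation (Hatcher, Corollary 4.12), which says a pair obtained by attaching cells of dimension $\geq d$ is $(d-1)$-connected, whereas the paper asserts that $(X \cup Y, X)$ deformation retracts onto $(Y, S^{d-1}) \cong (D^d, S^{d-1})$. As literally stated, the paper's claim is imprecise — $X \cup Y$ does not deformation retract onto $Y$ unless $X$ itself retracts onto $S^{d-1}$; what is actually needed there is an excision-type argument (homotopy excision / Blakers--Massey) or exactly the cellular approximation you cite. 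So your route reaches the same key fact by a cleaner and more rigorous path, and the added caution you raise about verifying that the attaching map is a cellular inclusion of $\partial Y = S^{d-1}$ is well placed.
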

	
	\begin{proof}
		Since $Y \cong D^d$ (i.e., the $d$-dimensional disk) and $X \cap Y \cong S^{d-1}$ is the boundary of $Y$, the pair $(X \cup Y, X)$ deformation retracts onto the pair $(Y, S^{d-1})$. Therefore,
		\[
		\pi_k(X \cup Y, X) \cong \pi_k(D^d, S^{d-1}).
		\]
		
		It is a standard fact that the relative homotopy groups satisfy
		\[
		\pi_k(D^d, S^{d-1}) = 0 \quad \text{for all } k \leq d - 1.
		\]
		
		Consider the long exact sequence \cite{switzer2017algebraic} of homotopy groups for the pair $(X \cup Y, X)$:
		\[
		\cdots \to \pi_k(X) \xrightarrow{i_*} \pi_k(X \cup Y) \xrightarrow{\partial} \pi_k(X \cup Y, X) \xrightarrow{\delta} \pi_{k-1}(X) \to \cdots
		\]
		Since $\pi_k(X \cup Y, X) = 0$ for $k \leq d - 1$, and $\pi_k(X) = \pi_{k-1}(X) = 0$ for $k \leq d - 1$ by assumption, the sequence reduces to:
		\[
		0 \to \pi_k(X \cup Y) \to 0,
		\]
		which implies:
		\[
		\pi_k(X \cup Y) = 0 \quad \text{for all } k \leq d - 1.
		\]
		
	\end{proof}
	
	Using a similar approach, we can also prove the following lemma.
	
	\begin{lemma}\label{attach2}
		Let $X$ be a $d$-dimensional CW complex satisfying $\pi_k(X) = 0$ for all $k \leq d-1$. Let Y be a $d$-dimensional simplex and $X \cap Y \cong D^{d-1}$, i.e., it is homeomorphic to the $(d-1)$-dimensional disk. 
		Then: $\pi_k(X \cup Y) = 0$ for all $k \leq d-1.$
	\end{lemma}

	In high-dimensional spaces, some common definitions can be generalized as follows.

	\begin{definition}[{\it multiple topological hyperedges}]\label{mul}
		Let $T_1$ and $T_2$ be $(d-1)$-dimensional topological hyperedges of a $(d-1)$-dimensional topological hypergraph $G$. If $V(T_1)= V(T_2)$, then $T_1$ and $T_2$ are called {\it multiple topological hyperedges}. 
	\end{definition}

	\begin{definition}[{\it $\mathbb{R}^d$-loops}]\label{loop}
		Let $T_1$ be a $(d-1)$-dimensional topological hyperedge of a $(d-1)$-dimensional topological hypergraph. $V(T_1)=\{u_1, u_2, ..., u_k\}$. If there exists $u_i, u_j \in V(T_1)$ ($i\neq j$) such that $u_i= u_j$, then $T_1$ is called an {\it $\mathbb{R}^d$-loop}. 
	\end{definition}
	
	Since higher-dimensional topological hyperedges have more than two vertices, the definition of the loops in higher-dimensional manifolds differs slightly from that in planar graphs. As long as two vertices of a topological hyperedge overlap, we consider it as an $\mathbb{R}^d$-loop.

	\begin{definition}[{\it simple $(d-1)$-dimensional topological hypergraph}]\label{simple} 
		If a $(d-1)$-dimensional topological hypergraph $G$ does not contain multiple topological hyperedges and $\mathbb{R}^d$-loop, then $G$ is called a {\it simple $(d-1)$-dimensional topological hypergraph}. 
	\end{definition}

	Unless otherwise specified, all $(d-1)$-dimensional topological hypergraphs mentioned hereafter will be simple. Analogous to the definition of incident and adjacent in graph theory, we can define the notions of incident and adjacent in $(d-1)$-dimensional topological hypergraph. 
	
	\begin{definition}[{\it $i$-face}]\label{def:Ai}
		Let $G$ be a $(d-1)$-dimensional topological hypergraph, and an $i$-face of $G$ is denoted by $a_i$, and the set of $i$-face of $G$ is denoted by 
		$$A_i(G)= \{a_i| a_i \ is \ \emph{an} \ i\emph{-face of} \ G \}.$$ 
		For convenience, we still use $V(G)$ to denote the vertex set of $G$, use $V(a_i)$ to denote the vertex set of $a_i$. 		
	\end{definition}

	\begin{definition}[{\it incident, adjacent and neighbour}] \label{def-incident}
		Let $u$ and $v$ be vertices, we say $u$ is adjacent to $v$ if there exists $a_{d-1} \in A_{d-1}(G)$ such that $u, v \in V(a_{d-1})$. 
		The set of all vertices that adjacent to $u$ is denoted by $N_G(u)$, the degree of $u$ is denoted by $d_G(u)= |N_G(u)|$. 
		
		Let $a_i$ be an $i$-face, and $a_j$ be a $j$-face ($i>0$ and $i\leq j$). We say $a_i$ is incident to $a_j$ (or $a_j$ is incident to $a_i$) if $i< j$ and $a_i\cap a_j= a_i$; we say $a_i$ is adjacent to $a_j$ if $i= j$ and $a_i\cap a_j$ is an $(i-1)$-face. 
		The set of all $j$-face incident (adjacent) to $a_i$ is denoted by $N_{G,j}(a_i)$. We say $d_{G,j}(a_i)= |N_{G,j}(a_i)|$ is the {\it $j$-dimensional degree} of $a_i$. When no ambiguity arises, $N_{G,j}(a_i)$ and $d_{G,j}(a_i)$ will be abbreviated as 
		$N_{j}(a_i)$ and $d_{j}(a_i)$, respectively.

	\end{definition}

	We now introduce the definition of \emph{minor} in the framework of $(d-1)$-dimensional topological hypergraphs,
	and start with defining the operations of \emph{deletion}, \emph{merging}, as well as \emph{contraction} of simplices.
	
	\vspace{0.5em}
	
	\begin{definition}[{\it topological hyperedge deletion}]\label{deletion}
		Given a $(d-1)$-dimensional topological hypergraph $G$, there are two natural ways of deriving smaller hypergraphs from $G$. 
		If $e$ is a $(d-1)$-dimensional topological hyperedge of $G$, we may obtain a hypergraph with $m-1$ $(d-1)$-dimensional topological hyperedges by deleting $e$ from $G$ but leaving the vertices and the remaining simplices intact. The resulting hypergraph is denoted by $G\backslash e$. Similarly, if $v$ is an $i$-simplex of $G$ with $i<d-1$, we may obtain a hypergraph by deleting from $G$ the topological hyperedge $v$ together with all the $(d-1)$-dimensional topological hyperedge incident with $v$. The resulting hypergraph is denoted by $G - v$ or $G\backslash v$. 
		
	\end{definition}

	Intuitively, a \emph{simplex contraction} can be understood as continuously shrinking a simplex into a single point.  
	We shall give a rigorous definition of this process by means of the concept of quotient space.
	
	
	\begin{definition}[topological hyperedge contraction]
		Let $P$ be a $(d-1)$-dimensional topological hypergraph, and let $e \subset P$ be an $i$-dimensional topological hyperedge.  
		The \emph{contraction of $e$ in $P$} is the quotient space
		\[
		P/e := P/{\sim_e},
		\]
		where the equivalence relation $\sim_e$ is defined by
		\[
		x \sim_e y \iff x = y \text{ or } x, y \in e.
		\]
		That is, all points of $e$ are identified to a single vertex, while all other points of $P$ remain distinct.  
		The resulting space $P/e$ is called the $(d-1)$-dimensional topological hypergraph obtained from $P$ by \emph{contracting the topological hyperedge $e$ to a vertex}. 
	\end{definition}


	It should be noted that a topological hyperedge contraction may produce multiple topological hyperedges (see Definition~\ref{mul}).  
	To ensure that the resulting $(d-1)$-dimensional topological hypergraph remains simple, these multiple simplices must be removed.  
	In the case of graphs, multiple edges can simply be reduced by deleting the redundant ones.  
	However, in higher dimensions, two distinct cases must be considered:
	\begin{itemize}
		\item For multiple $(d-1)$-simplices, it suffices to delete one of them;
		\item For multiple $i$-simplices with $i < d-1$, according to Definition~\ref{deletion}, deleting one of them directly would also remove all $(d-1)$-simplices adjacent to it.  
		Therefore, we need to define the \emph{merging of multiple simplices}.
	\end{itemize}
	
	Intuitively, \emph{merging} means combining two multiple topological hyperedges into a single topological hyperedge.  
	A rigorous definition of this process will also be given using the notion of a quotient space.
	
	\begin{definition}[merging of two $i$-topological hyperedges]
		Let $P$ be a $(d-1)$-dimensional topological hypergraph. 
		Let $f_1,f_2 \subset P$ be two $i$-topological hyperedges which are homeomorphic, and suppose their sets of vertices coincide:
		\[
		V(f_1) = V(f_2).
		\]
		Assume there exists an $(i+1)$-face $F$ of $P$ such that 
		\[
		\partial F = f_1 \cup f_2,
		\]
		and a homeomorphism 
		\[
		\varphi: f_1 \xrightarrow{\;\cong\;} f_2
		\]
		that restricts to the identity on vertices, i.e. $\varphi|_{V(f_1)} = \mathrm{id}_{V(f_1)}$.
		
		Define an equivalence relation $\sim_{\varphi}$ on $P$ by
		\[
		x \sim_{\varphi} \varphi(x) \quad \text{for all } x \in f_1.
		\]
		The \emph{merging} of $f_1$ and $f_2$ is the quotient space
		\[
		P/(f_1 \sim_{\varphi} f_2) := P/{\sim_{\varphi}},
		\]
		the resulting space $P/(f_1 \sim_{\varphi} f_2)$ is called the $(d-1)$-dimensional topological hypergraph obtained from $P$ by \emph{merging $f_1$ and $f_2$}. 
	\end{definition}

	\begin{definition}[minor]
		Let $G$ be a finite, simple $(d-1)$-dimensional topological hypergraph. 
		$H$ is called a minor of $G$ if $H$ can be formed from $G$ by deleting, merging and contracting simplices.  
		
	\end{definition}

	\begin{definition}[{\it $\mathbb{R}^d$-embedding}]
		Let $G$ be a $(d-1)$-dimensional topological hypergraph, an $\mathbb{R}^d$-embedding $G'$ of $G$ can be regarded as a hypergraph isomorphic to $G$ and is embedded in $\mathbb{R}^d$. 
	\end{definition}

	We conclude this section with a brief summary. The $\mathbb{R}^d$-hypergraph can be considered as an generalization of the definition of the planar graph into higher-dimensional space or as a special type of CW complex. Whether it is an $\mathbb{R}^d$-hypergraph, or a non-$\mathbb{R}^d$-hypergraph, they are essentially special cases of CW complex. A thorough understanding of these definitions lays a solid foundation for subsequent proofs.

	\begin{lemma}\label{structure}
		Let $K$ be a $(d-1)$-dimensional topological hypergraph. 
		Then every connected component of the complement \(S^d\setminus K\) is homeomorphic to the open $d$-ball.
	\end{lemma}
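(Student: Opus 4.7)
The approach combines Alexander duality with a Schoenflies-type argument. Fix an embedding $K\hookrightarrow S^d$ (guaranteed by the remark after Definition~\ref{special0}) and let $U$ be a connected component of $S^d\setminus K$. The hypothesis in Definition~\ref{general}, together with path-connectedness, makes $K$ a $(d-2)$-connected finite CW complex, so Hurewicz and the universal coefficient theorem give $\tilde H^i(K)=0$ for $0\le i\le d-2$. Alexander duality in $S^d$ then yields
\[
\tilde H_i(S^d\setminus K)\cong \tilde H^{d-i-1}(K),
\]
which vanishes whenever $i\ge 1$. Since the components of $S^d\setminus K$ are pairwise disjoint open sets, each component $U$ inherits $\tilde H_i(U)=0$ for all $i\ge 1$.

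Next I would establish $\pi_1(U)=0$. Any loop $\gamma\subset U$ bounds a singular disc in $S^d$ because $\pi_1(S^d)=0$ for $d\ge 2$. I would put this disc in general position with respect to the top-dimensional hyperedges of $K$, so that its intersection with $K$ is a finite $1$-complex in the disc. Using $\pi_1(K)=0$ together with the closedness condition (every $(d-2)$-face of a top hyperedge is shared with another), each intersection loop can be pushed off $K$ by a finger-move, one at a time, producing a null-homotopy of $\gamma$ entirely inside $U$. Combined with the first step, Hurewicz and Whitehead then imply that $U$ is contractible.

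It remains to promote contractibility to being an open $d$-ball. I would analyse the frontier $\partial\bar U\subseteq K$: the closedness of $K$ and its embedding in $S^d$ make $\partial\bar U$ a closed $(d-1)$-dimensional subcomplex with no free $(d-2)$-faces, and Lefschetz duality for the pair $(\bar U,\partial\bar U)$, combined with the vanishing established above, forces $\partial\bar U$ to have the integral homology of $S^{d-1}$. Checking that the higher homotopy groups also match (inherited from the contractibility of $\bar U$) identifies $\partial\bar U$ as a homotopy $(d-1)$-sphere. Local flatness on the top-dimensional stratum follows from hyperplane separation of polyhedra, while the lower-dimensional strata are handled using the polyhedral structure of $K$. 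The topological generalized Schoenflies theorem, valid in every dimension, then yields $\bar U\cong D^d$ and therefore $U\cong\operatorname{int} D^d$.

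\textbf{Main obstacle.} The genuinely delicate point is the final step: an open contractible $d$-manifold need not be a ball (for instance Whitehead's manifold in dimension three), so one really must identify $\partial\bar U$ as a \emph{locally flat} homotopy $(d-1)$-sphere before Schoenflies can be invoked. Controlling local flatness at the lower-dimensional strata of $K$, where several hyperedges meet, is the subtlest part and rests essentially on the polyhedral structure of $K$ together with the ``no pendant hyperedge'' assumption in Definition~\ref{closed topological-hypergraph}.
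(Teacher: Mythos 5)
Your route is genuinely different from the paper's. The paper argues by contradiction: assuming a component $D_i$ is not an open $d$-ball, it invokes the generalized Schoenflies theorem to conclude $\partial(D_i)\ne S^{d-1}$, asserts that $\partial(D_i)$ is therefore not $(d-2)$-connected, extracts a noncontractible sphere $S^j\subseteq\partial(D_i)\subseteq K$ with $j\le d-2$, and declares a contradiction with the hypothesis on the homotopy groups of $K$. You instead run a direct identification: Alexander duality in $S^d$ kills $\tilde H_i$ of the complement for $i\ge 1$, a general-position finger-move argument kills $\pi_1(U)$, Hurewicz and Whitehead give contractibility of $U$, and then Lefschetz duality together with Schoenflies is meant to finish. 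Your version makes visible a real subtlety that the paper's proof glosses over: the final step there passes silently from ``$S^j$ is noncontractible in $\partial(D_i)$'' to ``$S^j$ is noncontractible in $K$,'' which does not follow from the inclusion $\partial(D_i)\hookrightarrow K$ alone. Your duality computation works directly in the complement and sidesteps that issue. What the paper's route buys is brevity: it never needs to establish contractibility of $U$ as a separate fact.

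The gap you flag at the end is genuine and is the crux for both proofs. A contractible open $d$-manifold need not be an open $d$-ball (Whitehead's manifold in $d=3$), so Schoenflies has to be applied to $\partial\bar U$, and before that can happen one needs (i) that $\partial\bar U$ is a manifold at all, and (ii) that it is a locally flat embedded $(d-1)$-sphere. Point (i) is not automatic: $\partial\bar U$ is a union of $(d-1)$-polytopes of $K$, and near a $(d-2)$-face at which several top hyperedges meet, $U$ could a priori abut that face from non-consecutive sectors, in which case $\partial\bar U$ fails to be a manifold there. Ruling this out is exactly where the ``closed hypergraph'' (no pendant hyperedge) hypothesis must be used, and neither your sketch nor the paper's proof spells out that step. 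Once (i) is settled, (ii) is automatic because a codimension-one PL submanifold of a PL manifold is locally flat, so the remaining burden is purely combinatorial/PL. In short, both proofs ultimately rest on the same unaddressed polyhedral lemma; yours is honest about it, whereas the paper's skips over it and additionally leaves the inclusion-of-homotopy-groups gap noted above.
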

	
	\begin{proof}
		Note that the generalized Schoenflies theorem (Lemma \ref{Schoenflies}) guarantees that no pathological cases like \emph{Alexander's horned sphere} can arise in this proof.
		
		Since $K$ can be viewed as a CW complex, the complement \(S^d\setminus K\) is partitioned into $d$-dimensional components $D_1,D_2,...,D_x$. If there exists a $D_i\in \{D_1,D_2,...,D_x\}$ such that $D_i$ is not homeomorphic to the open $d$-ball, then the boundary of $D_i$ (denoted by $\partial(D_i)$) is not $S^{d-1}$ by Lemma \ref{Schoenflies}. Furthermore, $\partial(D_i)$ is not $(d-2)$-connected.  Therefore, there exists a $j$-dimensional sphere $S^j\subseteq \partial(D_i)$ ($j\leq d-2$) such that $S^j$ cannot continuously deform into a base point. On the other hand, it is clear that $S^j \subseteq K$, which implies that $K$ is not $(d-2)$-connected, a contradiction. 
		
	\end{proof}
	
	Lemma \ref{structure} implies that a $(d-1)$-dimensional topological hypergraph can be regarded as a union of internally disjoint $(d-1)$-dimensional spheres.

	\begin{lemma}[generalized Schoenflies theorem \cite{brown1961proof}]\label{Schoenflies}
		Let $\varphi\colon S^{n-1} \hookrightarrow S^n$ be a topological embedding in a locally flat way (that is, the embedding extends to that of a thickened sphere) with $n \geq 2$, and let $A$ be the closure of a component of $S^n \setminus \varphi(S^{n-1})$, then $A$ is homeomorphic to the closed $n$-dimensional ball $D^n$.
	\end{lemma}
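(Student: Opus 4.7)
The plan is to follow Morton Brown's 1960 approach, based on shrinking decompositions of $S^n$. First, the local flatness hypothesis provides a topological embedding $h : S^{n-1} \times [-1,1] \hookrightarrow S^n$ with $h(x,0) = \varphi(x)$. The image of this bicollar separates $S^n$ into two open regions; let $A$ and $B$ denote their closures, so that $A \cup B = S^n$ and $A \cap B = \varphi(S^{n-1})$. By the symmetry between the two halves of the bicollar, it is enough to prove $A \cong D^n$.

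The central reduction is to show that the quotient space $S^n/B$, obtained by collapsing the closed set $B$ to a single point, is homeomorphic to $S^n$. Granting this, the quotient map $q : S^n \to S^n/B \cong S^n$ restricts on $A$ to a continuous surjection that is a homeomorphism on $\mathrm{int}(A)$ and collapses $\partial A$ to a single point $p$; equivalently $A/\partial A \cong S^n$. Combining this with the $A$-side of the bicollar, which provides an explicit collar neighborhood of $\partial A$ in $A$, one concludes $A \cong D^n$ by the standard recognition of $D^n$ as the cone on its collared boundary sphere.

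The key tool for the reduction is Brown's shrinking criterion, which states that an upper semicontinuous decomposition of $S^n$ whose unique nondegenerate element is $B$ has quotient homeomorphic to $S^n$ provided that for every $\varepsilon > 0$ there exists a self-homeomorphism $g_\varepsilon : S^n \to S^n$ with $\mathrm{diam}(g_\varepsilon(B)) < \varepsilon$ and $g_\varepsilon = \mathrm{id}$ outside an $\varepsilon$-neighborhood of $B$. I would build $g_\varepsilon$ from the bicollar: using $h$, define a self-homeomorphism of $S^n$ supported in a neighborhood of the $B$-side collar $h(S^{n-1}\times[-1,0])$ that squeezes $B$ through the thin tame shell $h(S^{n-1}\times[-1,-1+\delta])$. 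By uniform continuity of $h$, this shell has diameter less than $\varepsilon$ for sufficiently small $\delta$, so $g_\varepsilon(B)$ is trapped inside it.

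The main obstacle is conceptual rather than computational. The set $B$ may be extraordinarily wild in $S^n$ (an Alexander horned sphere illustrates the pathologies that occur when local flatness fails), so it is far from obvious that any global self-homeomorphism of $S^n$ can herd $B$ into an arbitrarily small region. Brown's crucial insight is that we never need coordinates on $B$ itself: the bicollar gives enough tame control on a neighborhood of $\partial B$ to shrink $B$ entirely from outside. Once the shrinking criterion is verified, the identification $S^n/B \cong S^n$, and hence $A \cong D^n$, follows formally.
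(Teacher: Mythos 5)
The paper gives no proof of this lemma at all; it is quoted as a known result with the citation \cite{brown1961proof}, so there is no internal argument for your proposal to be compared against. Your sketch is therefore judged on its own merits, and it has a genuine gap.

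The concrete error is in the final step: you claim that ``by uniform continuity of $h$, the shell $h(S^{n-1}\times[-1,-1+\delta])$ has diameter less than $\varepsilon$ for sufficiently small $\delta$.'' This is false. As $\delta\to 0$ the shell converges, in the Hausdorff metric, to the sphere $h(S^{n-1}\times\{-1\})$, so its diameter converges to the diameter of that fixed sphere in $S^n$, which is a positive constant bounded away from zero. The shell becomes \emph{thin} but not \emph{small}, and the shrinking criterion requires $g_\varepsilon(B)$ to fit in an $\varepsilon$-ball, not in a thin shell of large diameter. Even if $g_\varepsilon$ could push $B$ into that shell, you would not have shrunk $B$.

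There is also a structural problem with the overall plan. You propose to verify the shrinking criterion for the entire closed complementary domain $B$ using a homeomorphism supported near the bicollar. But $B$ has nonempty interior, and the bicollar controls only a neighbourhood of $\partial B$: the wild core of $B$ lies outside the bicollar image, so a homeomorphism ``built from the bicollar'' does not move it at all. More fundamentally, since every neighbourhood of $B$ in $S^n$ is, up to a collar push, homeomorphic to $B$ itself, asserting that $B$ admits arbitrarily small cell neighbourhoods (cellularity, which for a single nondegenerate decomposition element is what the shrinking criterion amounts to) is essentially the assertion that $B$ is a cell --- i.e.\ the theorem being proved. So the ``reduction'' to $S^n/B\cong S^n$ is not a reduction: the verification of the shrinking hypothesis is where all the difficulty lives, and the bicollar alone does not discharge it. Brown's actual proof (and its later shrinking-criterion reformulations) does not attempt to shrink $B$; it passes to the suspension model $\Sigma S^{n-1}\cong S^n$ obtained by collapsing only the \emph{cores} of $A$ and of $B$ (the parts outside the bicollar image), obtains a map of $A$ onto a genuine round hemisphere $D^n$, and then shows this map is a near-homeomorphism by pulling back tame round balls from the target --- a device that sidesteps the circularity your approach runs into. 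As a sketch your outline points at the right circle of ideas, but the crucial step is missing and the step you do supply is incorrect.
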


	\section{Bridges}\label{section-Br}
	In this section, we aim to establish some lemmas of {\it bridge} in higher-dimensional spaces. 
	Let $H$ be a proper subgraph of a connected $(d-1)$-dimensional topological hypergraph $G$. The set $A_{d-1}(G) \backslash A_{d-1}(H)$ may be partitioned into classes as follows. 
	For each component $F$ of $G[V(G)-V(H)]$, there is a class consisting of the $(d-1)$-dimensional topological hyperedges of $F$ together with the $(d-1)$-dimensional topological hyperedges linking $F$ to $H$. 
	Each remaining $(d-1)$-dimensional topological hyperedges $e$ defines a singleton class $\{e\}$. 
	The subgraphs of $G$ induced by these classes are the bridges of $H$ in $G$. 
	It follows immediately from this definition that bridges of $H$ can intersect only in $i$-dimensional topological hyperedges of $H$ with $i\leq d-2$, and that any two vertices of a bridge of $H$ are connected by a path in the bridge that is internally disjoint from $H$. 
	
	For a bridge $B$ of $H$, the {\it projection} of $B$ is denoted by $p(B)= B \cap H$; 
	the elements of $V(B \cap H)$ are called its {\it vertices of attachment} to $H$, and the remaining vertices of $B$ are its internal vertices. 
	A bridge is trivial if it has no internal vertices. 
	A bridge with $k$ vertices of attachment is called a $k$-bridge. Two bridges with the same projection are equivalent bridges. 
	

	We are concerned here with bridges of $(d-1)$-sphere, and all bridges are understood to be bridges of a given $(d-1)$-sphere $S^{d-1}$. Thus, to avoid repetition, we abbreviate `bridge of $S^{d-1}$' to `bridge' in the coming discussion. 
	
	\begin{lemma}\label{projection-connect}
		Let $G$ be a $(d-1)$-dimensional topological hypergraph and $B$  a bridge of $(d-1)$-sphere $S'$, then the projection of $B$, denoted by $p(B)= B \cap S'$,  is a connected $\mathbb{R}^{d-1}$-hypergraph. 
	\end{lemma}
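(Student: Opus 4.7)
The plan is to establish both parts of the conclusion separately: first, that $p(B)$ embeds into $\mathbb{R}^{d-1}$; and second, that $p(B)$ is connected as a topological space.

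For the embedding, observe that by the definition of a bridge the set $A_{d-1}(B)$ contains no $(d-1)$-topological hyperedge of $S^{d-1}$, so $p(B)$ is entirely contained in the $(d-2)$-skeleton of $S^{d-1}$. In particular $S^{d-1}\setminus p(B)$ is nonempty and open, and stereographic projection from any point of this complement yields a homeomorphism $S^{d-1}\setminus\{\mathrm{pt}\}\cong \mathbb{R}^{d-1}$ that restricts to an embedding of $p(B)$. The defining conditions of a $(d-2)$-dimensional topological hypergraph from Definition~\ref{general}, namely that the constituent polytopes meet only along common faces and that $\pi_i(p(B))=0$ for $1\leq i\leq d-3$, are inherited directly from the ambient sphere $S^{d-1}$, which itself satisfies all of these properties.

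For connectedness, I would argue by contradiction: assume $p(B)=P_1\sqcup P_2$ with both pieces nonempty and clopen in $p(B)$. Since the bridge $B$ is connected by construction (a single connected component $F$ of $G[V(G)\setminus V(S^{d-1})]$ together with its linking $(d-1)$-hyperedges), there exists a path $\gamma\subseteq B$ from a vertex of $P_1$ to a vertex of $P_2$ whose interior lies strictly in $B\setminus S^{d-1}$. Path-connectedness of $S^{d-1}$ then supplies a complementary path $\gamma'\subseteq S^{d-1}$ between the same endpoints going the other way around, so the concatenation $\alpha=\gamma\cdot \gamma'$ is a closed loop in $B\cup S^{d-1}\subseteq G$.

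The core obstacle is to prove that $\alpha$ is non-contractible in $G$, contradicting $\pi_1(G)=0$ (valid for $d\geq 3$ by Definition~\ref{general}; the residual case $d=2$ can be handled directly since a $1$-dimensional sphere is a cycle). A Mayer--Vietoris argument applied to the cover $\{B, S^{d-1}\}$ of $B\cup S^{d-1}$ shows that the nonvanishing of $\tilde H_0(p(B))$ produces, through the connecting homomorphism, a nonzero class in $H_1(B\cup S^{d-1})$ represented precisely by $\alpha$. To transfer nontriviality from $H_1$ of the subspace to $\pi_1$ of the ambient $G$ is the main technical difficulty: one must rule out any $2$-disk in $G$ bounding $\alpha$ by showing that any chain of adjacent $(d-1)$-hyperedges used by such a disk would either force $B$ to merge with another bridge along a path avoiding $S^{d-1}$, or require a $(d-1)$-hyperedge of $S^{d-1}$ to belong to $B$, both of which contradict the equivalence-class definition of bridges. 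This rules out the filling disk and completes the contradiction.
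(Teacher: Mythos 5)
Your proposal contains a genuine gap in the part that is actually the heart of the lemma.

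You write that the condition $\pi_i(p(B))=0$ for $1\le i\le d-3$ is ``inherited directly from the ambient sphere $S^{d-1}$, which itself satisfies all of these properties.'' This is false as a general principle: subspaces do not inherit vanishing homotopy groups. A circle embedded in $S^2$ has $\pi_1\cong\mathbb{Z}$ even though $\pi_1(S^2)=0$; more to the point here, $p(B)$ is precisely a codimension-one subcomplex of $S^{d-1}$, which is exactly the setting where homotopy can appear. The triviality of $\pi_i(p(B))$ for $i\le d-3$ is the substance of the lemma, not a freebie, and the paper's entire proof is devoted to establishing it. The paper does this by a structural decomposition: it invokes Lemma \ref{structure} to write $S'\cup B$ as a union of $(d-1)$-spheres $S_1,\dots,S_x$ with $S'=S_1$, observes that each nonempty $D_i=S_i\cap S'$ can be reduced to a $(d-1)$-ball so that $p(B)=\bigcup_i\partial(D_i)$ is a union of $(d-2)$-spheres meeting along contractible faces, and then applies the attaching Lemmas \ref{attach} and \ref{attach2} to conclude $\pi_k(p(B))\cong\pi_k(S^{d-2})=0$ for $k\le d-3$. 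None of that is captured by an ``inheritance from the ambient space'' assertion; you would need to supply an argument of comparable substance.

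The stereographic-projection observation for the embedding is fine but not where the difficulty lies. Your connectedness argument is also not closed: the Mayer--Vietoris step correctly shows that a disconnected $p(B)$ would force $\tilde H_1(B\cup S^{d-1})\neq 0$, and you then honestly flag the real obstacle, namely that this does not immediately give a nontrivial class in $\pi_1(G)$ since $B\cup S^{d-1}$ is a proper subcomplex and the inclusion need not be $\pi_1$-injective. The sketch you offer to rule out a filling $2$-disk in $G$ is purely heuristic and does not constitute a proof. Notice, by contrast, that in the paper's approach connectedness falls out of the same sphere decomposition ($p(B)$ is exhibited as a union of $(d-2)$-spheres which one checks to be chained), so the two claims are handled uniformly rather than by separate and much heavier machinery.
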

	
	\begin{proof}
		
		Note that we only need to prove that the $i$-th homotopy group of $p(B)$ is trivial for all $i\in \{1,2,3,...,d-3\}$. 
		
		Lemma \ref{structure} implies that every $(d-1)$-dimensional topological hypergraph can be regarded as a union of $(d-1)$-dimensional spheres $S^{d-1}$. Let $S' \cup B = S_1 \cup S_2 \cup \dots \cup S_x$, where each $S_i$ $(i \in \{1, 2, \dots, x\})$ is a $(d-1)$-dimensional sphere. Without loss of generality, we assume that $S' = S_1$. 
		
		It is easy to observe that for each $S_i \in \{ S_2, S_3, \dots, S_x \}$, the intersection $D_i = S_i \cap S'$ is either empty or an $i$-dimensional ball ($i\leq d-1$). We only need to consider the case when $i= d-1$, otherwise there exists $\{S_{x_1}, S_{x_2}, ...,S_{x_s}\}\subseteq \{ S_2, S_3, \dots, S_x \}$ such that $S_j \cap S'$ is a $(d-1)$-dimensional ball for all $S_j\in \{S_{x_1}, S_{x_2}, ...,S_{x_s}\}$, and $S_i \cap S' \subseteq \bigcup_{S_j\in \{S_{x_1}, S_{x_2}, ...,S_{x_s}\}}(S_j \cap S')$. 
		
		In case when $D_i = S_i \cap S'$ is a $(d-1)$-ball, the boundary of $D_i$, denoted $\partial(D_i)$, is a $(d-2)$-sphere, note that $p(B) = \bigcup_{i \in \{2, 3, \dots, x\}} \partial(D_i)$.
		
		Since $p(B)$ can be viewed as a union of finitely many $(d-2)$-dimensional spheres, and each pair of spheres intersects only along their boundaries (i.e., for all $i, j \in \{2, 3, \dots, x\}$, the intersection $D_i \cap D_j$ is either empty or a contractible disk), it follows from Lemmas \ref{attach} and \ref{attach2} that $\pi_k(p(B)) \cong \pi_k(S^{d-2})$ for all $k\leq d-3$. Therefore, the $i$-th homotopy group of $p(B)$ is trivial for all $i\in \{1,2,3,...,d-3\}$. 
		
	\end{proof}

	The projection of a $k$-bridge $B$ with $k\geq d-1$ effects a partition of $S^{d-1}$ into $r$ disjoint segments, called the segments of $B$. Two bridges avoid each other if all the vertices of attachment of one bridge lie in a single segment of the other bridge; otherwise, they overlap.

	Two bridges $B$ and $B'$ are skew if $p(B)$ contains a $(d-2)$-sphere $C(B)$ as a subgraph which effects a partition of $S^{d-1}$ into two disjoint segments $\{R_1, R_2\}$, and there are distinct vertices $u,v$ in vertices of attachment of $B'$ such that $u$ and $v$ are in different segment of $\{R_1, R_2\}$. Note that there is a $uv$-path $P(u,v)$ in $S^{d-1}$ such that $P(u,v)\cap C(B) \neq \phi$ by Lemma \ref{projection-connect} and Jordan-Brouwer Separation Theorem (Lemma \ref{jordan-Brouwer}). 
	
	\begin{lemma}[Jordan-Brouwer Separation Theorem \cite{hatcher2002algebraic}]\label{jordan-Brouwer}
		Let $X$ be a $d$-dimensional topological sphere in the $(d+1)$-dimensional Euclidean space $\mathbb{R}^{d+1}$ ($d > 0$), i.e. the image of an injective continuous mapping of the $d$-sphere $S^d$ into $\mathbb{R}^{d+1}$, then the complement $Y$ of $X$ in $\mathbb{R}^{d+1}$ consists of exactly two connected components. One of these components is bounded (the interior) and the other is unbounded (the exterior). The set $X$ is their common boundary.
	\end{lemma}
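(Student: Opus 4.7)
The plan is to invoke the one-point compactification $\mathbb{R}^{d+1}\cup\{\infty\}\cong S^{d+1}$ and prove the stronger statement that for any embedding $X\cong S^d \hookrightarrow S^{d+1}$, the complement $S^{d+1}\setminus X$ has exactly two path components with reduced singular homology concentrated in degree $0$. Subtracting the point at infinity from one of these components preserves connectedness (an open subset of $\mathbb{R}^{d+1}$ is path-connected iff it is connected), producing exactly two components of $\mathbb{R}^{d+1}\setminus X$; the one containing a neighborhood of $\infty$ is the unbounded exterior, and the other is necessarily bounded since $X$ is compact.

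First I would reduce to a homological calculation via the reduced Mayer–Vietoris sequence, proved by induction on dimension. The key intermediate claim is that for any embedded $k$-disk $D\cong D^k$ in $S^{d+1}$, the complement $S^{d+1}\setminus D$ has trivial reduced singular homology in all degrees. The base case $k=0$ is immediate. For the inductive step, split $D$ as $D_+\cup D_-$ along an equatorial $(k-1)$-disk $D_0$, with $D_+\cap D_-=D_0$. Set $U=S^{d+1}\setminus D_+$ and $V=S^{d+1}\setminus D_-$, both open, with $U\cap V=S^{d+1}\setminus D$ and $U\cup V=S^{d+1}\setminus D_0$. The induction hypothesis gives $\tilde H_*(U)=\tilde H_*(V)=\tilde H_*(U\cup V)=0$, so reduced Mayer–Vietoris forces $\tilde H_*(U\cap V)=0$ provided no nontrivial cycle survives in the intersection; this requires the standard diameter-halving argument iterated on a bisection tree to rule out the possibility that a nonzero class is ``trapped'' between the two halves.

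Next I would prove the sphere version. Decompose $X=X_+\cup X_-$ into two hemispheres meeting in $X_+\cap X_-\cong S^{d-1}$. With $U=S^{d+1}\setminus X_+$ and $V=S^{d+1}\setminus X_-$, the disk lemma just established gives $\tilde H_*(U)=\tilde H_*(V)=0$. Applying reduced Mayer–Vietoris to $U\cup V=S^{d+1}\setminus (X_+\cap X_-)$ and $U\cap V=S^{d+1}\setminus X$, the connecting homomorphism yields the dimension-shift
\[
\tilde H_i(S^{d+1}\setminus X)\;\cong\;\tilde H_{i+1}\bigl(S^{d+1}\setminus S^{d-1}\bigr).
\]
Iterating this shift $d$ times down to the base computation $\tilde H_d(S^{d+1}\setminus S^0)\cong \tilde H_d(S^d)\cong\mathbb{Z}$, I obtain $\tilde H_0(S^{d+1}\setminus X)\cong\mathbb{Z}$, i.e.\ exactly two path components.

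Finally, for the common-boundary claim, one checks that every point $x\in X$ is a limit of points from each component. If $x$ had a neighborhood meeting only one component, then using a small local chart and invariance of domain on the embedded disk containing $x$ produces a local separation that contradicts the global two-component computation above. The hard part I expect is precisely the bisection/diameter-halving step inside the disk lemma: one must argue via compactness and iterated subdivision of the separating disk that a supposed nonzero cycle in $S^{d+1}\setminus D$ cannot persist as a Mayer–Vietoris class, and this is the genuinely delicate ingredient that distinguishes the Jordan–Brouwer theorem from its deceptively simple statement.
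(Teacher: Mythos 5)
This statement is cited in the paper as a classical theorem (the Jordan--Brouwer Separation Theorem, attributed to Hatcher's \emph{Algebraic Topology}) and is not proved in the paper at all. Your sketch correctly reproduces the standard proof appearing in the cited reference: one-point compactification, the disk-complement acyclicity lemma proved by bisection plus a direct-limit/compact-support argument, the hemisphere Mayer--Vietoris dimension shift $\tilde H_i(S^{d+1}\setminus S^d)\cong\tilde H_{i+1}(S^{d+1}\setminus S^{d-1})$ iterated down to $\tilde H_d(S^{d+1}\setminus S^0)\cong\mathbb Z$, and the small-disk argument for the common-boundary claim. You correctly identify the bisection step inside the disk lemma as the genuinely delicate part (Mayer--Vietoris alone does not close the induction; one needs the nested-interval/compact-support argument), so the outline is sound, though the boundary claim at the end is stated only loosely and would need the standard ``remove a small subdisk of $X$ and use the disk lemma to connect the two sides'' argument to be made rigorous.
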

	
	%

	\begin{lemma}\label{bridges}
		Overlapping bridges of a $(d-1)$-dimensional topological hypergraph are either skew or else equivalent $(d+1)$-bridges.
	\end{lemma}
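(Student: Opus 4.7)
My plan is to adapt the classical argument for overlapping bridges of a cycle in planar graphs to the present $(d-1)$-dimensional setting, leveraging Lemma \ref{projection-connect}, Lemma \ref{structure}, the generalized Schoenflies theorem (Lemma \ref{Schoenflies}), and the Jordan--Brouwer Separation Theorem (Lemma \ref{jordan-Brouwer}). First I would fix overlapping bridges $B$ and $B'$ of $S^{d-1}$. By Lemma \ref{projection-connect}, both $p(B)$ and $p(B')$ are connected $\mathbb{R}^{d-1}$-hypergraphs embedded in $S^{d-1}$; Lemma \ref{structure} applied inside $S^{d-1}$ then shows that each segment of a bridge is an open $(d-1)$-ball whose boundary is a $(d-2)$-sphere lying inside the corresponding projection. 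These boundary $(d-2)$-spheres are precisely the candidates required by the definition of skew.

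Next, I would run the following dichotomy. Suppose some attachment $u$ of $B'$ lies in the interior of a segment $R$ of $B$ while some other attachment $v$ of $B'$ lies outside $\overline{R}$. Then $\partial R \subseteq p(B)$ is a $(d-2)$-sphere separating $u$ from $v$ in $S^{d-1}$ by Lemma \ref{jordan-Brouwer}, so $B$ and $B'$ are skew and we are done. Hence, under the non-skew hypothesis, every attachment of $B'$ must either lie on $p(B)$ or lie together with every other attachment inside the closure of a single segment of $B$. The second alternative is excluded by the overlap hypothesis, so every attachment of $B'$ lies on $p(B)$; by the symmetric argument every attachment of $B$ lies on $p(B')$. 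Since the $0$-skeleton of a projection is exactly the attachment set of its bridge, the two attachment sets coincide, and consequently $p(B)=p(B')$, so $B$ and $B'$ are equivalent bridges.

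Finally, I would pin down the common cardinality $k$ of the attachment set. A lower bound $k\geq d$ is forced by the requirement that $p(B)$ contain a separating $(d-2)$-sphere at all (otherwise $p(B)$ does not partition $S^{d-1}$ and overlap is impossible). If $k=d$, the projection reduces essentially to a single $(d-2)$-sphere on which all attachments of $B'$ sit, so they belong to the closure of either of the two segments of $B$, contradicting overlap. If $k\geq d+2$, I would use the triangulated structure of $p(B)$ provided in Section \ref{section5} together with Lemma \ref{structure} to extract a $(d-2)$-sphere $C\subseteq p(B)$ separating two shared attachments into different open segments of $B$, which reinstates skewness against the hypothesis. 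Only $k=d+1$ survives, so $B$ and $B'$ are equivalent $(d+1)$-bridges.

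The principal obstacle I foresee is the last counting step, particularly ruling out $k\geq d+2$. In the planar case ($d=2$) four attachments on a cycle can always be split $2+2$ by a $0$-sphere, and pigeonhole settles the argument immediately. In general dimensions this becomes a genuine combinatorial--topological claim: on a triangulated $(d-2)$-sphere contained in $p(B)$, any $d+2$ marked vertices should admit a sub-$(d-2)$-sphere separating them nontrivially. Supplying this lemma, and verifying that the separating sphere indeed witnesses skewness \emph{between} $B$ and $B'$ rather than merely a separation internal to $p(B)$, is where I expect the bulk of the technical effort to concentrate.
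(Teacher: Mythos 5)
Your route tracks the paper's own argument quite closely: both proofs perform a case analysis on whether $B$ and $B'$ are equivalent and on the cardinality $k$ of the attachment set, and both rely on extracting a separating $(d-2)$-sphere from the projection $p(B)$ (using Lemma~\ref{projection-connect} to know the projection is an $\mathbb{R}^{d-1}$-hypergraph, Lemma~\ref{structure} so that segments are open $(d-1)$-balls with spherical boundary, and Lemma~\ref{jordan-Brouwer} to witness skewness). Your explicit dichotomy ``some attachment of $B'$ in the interior of a segment vs.\ all attachments of $B'$ on $p(B)$,'' together with the symmetric swap of $B$ and $B'$, is just a cleaner packaging of the paper's direct split into non-equivalent vs.\ equivalent bridges, and yields the same conclusions case by case.

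You correctly identify the $k \geq d+2$ step as the place where a genuine combinatorial--topological lemma is needed (that among $d+2$ or more attachment vertices on the projection of an equivalent bridge one can always carve out a $(d-2)$-sphere $C \subseteq p(B)$ separating two of them). It is worth knowing, however, that the paper's own proof does not fill this gap either: it simply asserts ``there exists a subgraph $D \subseteq B$ which effects a partition of $S^{d-1}$ into two disjoint segments'' at exactly the level of detail you flag as missing. So your assessment of where the technical weight lies is accurate, and your proposal is not weaker than the written proof; it just makes the missing lemma visible rather than tacit. Two smaller cautions on your argument as stated: (i) you conclude $p(B) = p(B')$ from the equality of attachment \emph{vertex sets}, but in dimension $d \geq 3$ the projection is a subcomplex containing faces of dimension up to $d-2$, so you should either argue that equal $0$-skeletons force equal projections here, or weaken the claim to equality of attachment sets (which is all the lemma statement actually needs for the $(d+1)$-bridge conclusion); (ii) you should pin down whether a ``segment'' is taken open or closed when you invoke ``contradicting overlap'' in the $k = d$ sub-case, since attachments lying exactly on $p(B)$ sit in the closure of every incident segment and the avoid/overlap dichotomy must be read accordingly.
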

	
	\begin{proof}
		Suppose that bridges $B$ and $B'$ overlap. Clearly, each of them must have at least $d$ vertices of attachment. If either $B$ or $B'$ is a $d$-bridge, it is easily verified that they must be skew (two equivalent $d$-bridges cannot overlap). We may therefore assume that both $B$ and $B'$ have at least $(d+1)$ vertices of attachment. 
		
		If $B$ and $B'$ are not equivalent bridges, then all the vertices of attachment of one bridge cannot lie in a single segment of the other bridge. Without loss of generality, let $C(B)\subseteq p(B)$ be a $(d-2)$-sphere which effects a partition of $S^{d-1}$ into $2$ disjoint segments $\{R_1, R_2\}$, then there exist distinct vertices $u',v'$ in vertices of attachment of $B'$ such that $u'$ and $v'$ are in different segment of $\{R_1, R_2\}$.  It follows that $B$ and $B'$ are skew. 
		
		If $B$ and $B'$ are equivalent $k$-bridges, then $k\geq d+1$. If $k\geq d+2$, then there exists a subgraph $D\subseteq B$ which effects a partition of $S^{d-1}$ into $2$ disjoint segments $\{R_1, R_2\}$ and all the vertices of attachment of $B'$ cannot lie in a single segment of $B$ since they are overlapping bridges, therefore, $B$ and $B'$ are skew; if $k=d+1$, they are equivalent $(d+1)$-bridges.

	\end{proof}

	\section{Hyper ear decomposition}
	\label{section-ED}
	
	In this section, we establish several lemmas on {\it ear decomposition} in higher-dimensional spaces.
	Let $G$ be a $(d-1)$-dimensional topological hypergraph which is $1$-sphere connected. Note that the $1$-sphere connected $(d-1)$-dimensional topological hypergraph contains a subgraph $G_0$ which is homeomorphic to $S^{d-1}$. We describe here a simple recursive procedure for generating any such hypergraph starting with an arbitrary $(d-1)$-sphere in the $(d-1)$-dimensional topological hypergraph. 
	
	\begin{definition}[{\it hyper ear}]\label{ear}
		Let $F$ be a subgraph of a $(d-1)$-dimensional topological hypergraph $G$. 
		A {\it hyper ear} of $F$ in $G$ is a nontrivial $(d-1)$-ball in $G$ whose boundary lies in $F$ but whose interior vertices are not contained in $F$.
	\end{definition}

	\begin{definition}[{\it hyper ear decomposition}]\label{hyper-ear-decomposition}
		A nested sequence of a $(d-1)$-dimensional topological hypergraph is a sequence $(G_0,G_1,\ldots,G_k)$ of $(d-1)$-dimensional topological hypergraphs such that $G_i\subseteq G_{i+1}$, $0\leq i\leq k-1$. A {\it hyper ear decomposition} of a $2$-sphere connected $(d-1)$-dimensional topological hypergraph $G$ is a nested sequence $(G_0,G_1,\ldots,G_k)$ of $G$ such that: 
		
		\begin{itemize}
			\item $G_0$ is homeomorphic to $S^{d-1}$;
			\item $G_{i+1}= G_i \cup P_i$ where $P_i$ is a hyper ear of $G_i$ in $G$ for $0\leq i\leq k-1$;
			\item $G_k=G$.
		\end{itemize}
	\end{definition}
	
	\begin{lemma}\label{ear-exist}
		In a $(d-1)$-dimensional topological hypergraph $G$ with $|V(G)|\geq d+1$, if $G$ is $1$-sphere connected, then $G$ has a hyper ear decomposition. 
	\end{lemma}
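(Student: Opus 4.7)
The plan is to prove the existence of a hyper ear decomposition by induction on $|A_{d-1}(G)|-|A_{d-1}(G_i)|$, the number of $(d-1)$-topological hyperedges yet to be absorbed, in direct analogy with the classical proof that every $2$-connected planar graph admits an ear decomposition starting from a cycle. The argument splits into two parts: first, identifying a starting $(d-1)$-sphere $G_0$; second, repeatedly attaching a hyper ear to the current subgraph while preserving the property that its $1$-skeleton is $d$-connected. Termination occurs automatically when all $(d-1)$-hyperedges of $G$ have been consumed.

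For the base case, Lemma \ref{structure} asserts that every $\mathbb{R}^d$-hypergraph decomposes as a union of internally disjoint $(d-1)$-spheres, so at least one embedded $S^{d-1}$ sits inside $G$. I would let $G_0$ be any such sphere; the hypotheses $|V(G)|\geq d+1$ together with $d$-connectedness of the $1$-skeleton guarantee that $G_0$ is genuinely nontrivial and, if $G\neq G_0$, that $G$ is attached to $G_0$ along a rich enough intersection for the inductive step to proceed.

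For the inductive step, suppose $G_i\subsetneq G$ has already been constructed as a $d$-connected subgraph. I would invoke the bridge framework of Section \ref{section-Br} applied to $G_i$ rather than to a single sphere. Since $G_i\subsetneq G$, at least one bridge $B$ of $G_i$ in $G$ exists, and by Lemma \ref{projection-connect} its projection $p(B)=B\cap G_i$ is a connected $\mathbb{R}^{d-1}$-hypergraph. Starting from a $(d-1)$-hyperedge $T\subseteq B$ that is incident to $G_i$, I would iteratively absorb an adjacent hyperedge of $B$ whenever the current union has a portion of its boundary lying outside $G_i$. Lemmas \ref{attach} and \ref{attach2} ensure that each absorption preserves vanishing of the first $d-2$ homotopy groups, so the growing object has the homotopy type of a $(d-1)$-ball throughout. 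Because $B$ has only finitely many hyperedges, the procedure terminates with some $P_i$ whose boundary is a $(d-2)$-sphere contained in $p(B)\subseteq G_i$, which is exactly a hyper ear in the sense of Definition \ref{ear}. Setting $G_{i+1}=G_i\cup P_i$ and iterating yields the desired nested sequence.

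The main obstacle will be twofold. First, Lemmas \ref{attach} and \ref{attach2} only certify triviality of homotopy groups, not that $P_i$ is literally homeomorphic to a closed $(d-1)$-ball; ruling out more exotic simply-connected $(d-1)$-complexes requires additional input from the generalized Schoenflies theorem (Lemma \ref{Schoenflies}) combined with Lemma \ref{bridges} on the structure of overlapping bridges, which constrains how the hyperedges of $B$ can glue along their common $(d-2)$-faces. Second, one must verify that $G_{i+1}$ remains $d$-connected in the $1$-skeleton sense; the classical analogue (attaching a path to a $2$-connected graph preserves $2$-connectedness) should extend here because the vertices of attachment of a bridge of $G_i$ in a graph whose $1$-skeleton is $d$-connected necessarily include at least $d$ vertices spanning a configuration on which no $(d-1)$-cut can separate $P_i$ from $G_i$. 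Should the greedy absorption ever fail to preserve $d$-connectedness, I would fall back on choosing $P_i$ minimally or along a distinguished segment of $p(B)$, directly imitating the "next ear" selection in the two-dimensional argument.
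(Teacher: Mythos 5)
The paper itself takes a much shorter and genuinely different route: it invokes Lemma~\ref{structure} to write $G$ as a finite union of $(d-1)$-spheres and then declares the hyper ear decomposition ``obvious,'' the intended construction being to start from one such sphere $G_0$ and successively attach the fresh portion of each subsequent sphere as an ear. Your proposal instead sets up an inductive bridge argument in the style of Section~\ref{section-Br}, and although it is in the spirit of the classical ear-decomposition proof for $2$-connected graphs, it contains gaps beyond the two you flagged yourself.

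Concretely: Lemma~\ref{projection-connect} is stated only for a bridge of a single $(d-1)$-sphere $S'$, and its proof relies on each $S_i \cap S'$ cutting $S'$ into $(d-1)$-balls. Once $i \geq 1$, the subcomplex $G_i$ is not a sphere, so the lemma cannot simply be ``applied to $G_i$ rather than a single sphere''; you would either have to work inside a bounding sphere of a region of $G_i$ (via Lemma~\ref{E1}) or reprove a projection-connectedness statement for bridges of an arbitrary $d$-connected subcomplex. The greedy absorption step is also not supported by the cited lemmas: Lemmas~\ref{attach} and~\ref{attach2} require the intersection of the new simplex $Y$ with the running union $X$ to be homeomorphic to $D^{d-1}$ or $S^{d-1}$, and nothing in the process ensures this at each absorption, so even homotopy triviality of the partial union is not automatic. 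Even granting that, vanishing homotopy groups through degree $d-2$ do not determine the homeomorphism type of $P_i$, and the Schoenflies theorem (Lemma~\ref{Schoenflies}) only applies to a sphere already embedded in a sphere, so the ``additional input'' you mention does not straightforwardly close the gap. Together with the unverified preservation of $d$-connectivity of the $1$-skeleton that you already acknowledged, these leave the inductive step unestablished as written.
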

	
	\begin{proof}
		On the one hand, $G$ is $1$-sphere connected. On the other hand, since $G$ contains at least $(d+1)$ vertices, it must contain a subgraph homeomorphic to $S^{d-1}$. 
		
		Since $G$ is homeomorphic to the union of a finite collection of $(d-1)$-spheres by Definition \ref{special0} and Lemma \ref{structure}, it is obvious that $G$ has a hyper ear decomposition. 
	\end{proof}

	%
	%
	%
	%
	%

	\begin{lemma}\label{E1}
		In a $(d-1)$-dimensional topological hypergraph $G$ with $|V(G)|\geq d+1$, if $G$ is $1$-sphere connected, then each maximal connected region in $\mathbb{R}^d\backslash G$ is bounded by a $(d-1)$-sphere. 
		
	\end{lemma}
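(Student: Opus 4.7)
The plan is to induct on the number of hyper ears in a hyper ear decomposition $(G_0, G_1, \ldots, G_k = G)$ of $G$, whose existence is guaranteed by Lemma \ref{ear-exist}. For the base case, $G_0$ is homeomorphic to $S^{d-1}$, so the Jordan--Brouwer separation theorem (Lemma \ref{jordan-Brouwer}) immediately gives that $\mathbb{R}^d \setminus G_0$ splits into a bounded interior and an unbounded exterior, both of whose boundaries are exactly $G_0 \cong S^{d-1}$.

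For the inductive step, I would assume every maximal connected region of $\mathbb{R}^d \setminus G_i$ is bounded by a $(d-1)$-sphere, and then examine $G_{i+1} = G_i \cup P_i$, where $P_i$ is a hyper ear, i.e., a $(d-1)$-ball whose boundary $\partial P_i \cong S^{d-2}$ lies in $G_i$ and whose interior is disjoint from $G_i$. Since $\mathrm{int}(P_i)$ is connected and misses $G_i$, it lies entirely inside one maximal connected region $R$ of $\mathbb{R}^d \setminus G_i$; consequently $\partial P_i$ must lie in $\partial R$, which by the inductive hypothesis is a $(d-1)$-sphere, call it $S$.

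Next I would apply the generalized Schoenflies theorem (Lemma \ref{Schoenflies}) to the locally flat embedding $\partial P_i \cong S^{d-2} \hookrightarrow S \cong S^{d-1}$: it decomposes $S$ into two $(d-1)$-balls $B_1$ and $B_2$ meeting along $\partial P_i$. The ear $P_i$ itself then partitions $R$ into two subregions, whose boundaries are $B_1 \cup P_i$ and $B_2 \cup P_i$, respectively; each is the union of two $(d-1)$-balls glued along their common $(d-2)$-sphere boundary, hence homeomorphic to $S^{d-1}$. The remaining maximal connected regions of $\mathbb{R}^d \setminus G_i$ are untouched, so the induction advances.

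The hard part will be verifying the locally flat hypothesis needed to invoke Lemma \ref{Schoenflies}, and ruling out pathologies such as $\partial P_i$ failing to lie in a single boundary $(d-1)$-sphere, or $P_i$ contacting $G_i$ at interior points of $\mathrm{int}(P_i)$. Both issues should be resolved by the polytopal structure imposed in Definition \ref{general}, which forces every intersection to be along a common face and keeps every embedding in sight tame.
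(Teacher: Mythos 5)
Your proof follows exactly the same strategy as the paper's: induct on a hyper ear decomposition provided by Lemma~\ref{ear-exist}, with the base case handled by $G_0 \cong S^{d-1}$ and the inductive step showing the ear $P_i$ splits one region into two, each still bounded by a $(d-1)$-sphere. Where the paper simply asserts ``it is easy to see that these regions are also bounded by $(d-1)$-spheres,'' you supply the justification the paper elides --- applying Lemma~\ref{Schoenflies} to $\partial P_i \cong S^{d-2} \hookrightarrow S$ to split $S$ into two $(d-1)$-balls $B_1, B_2$ and observing that each $B_j \cup P_i$ is two $(d-1)$-balls glued along their common $(d-2)$-sphere boundary, hence a $(d-1)$-sphere --- so your write-up is the same argument made explicit.
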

	
	\begin{proof}
		Note that $G$ has a hyper ear decomposition by Lemma \ref{ear-exist}. 
		Consider a hyper ear decomposition $(G_0,G_1,...,G_k)$ of $G$, where $G_0$ is homeomorphic to $S^{d-1}$, $G_k= G$, and, for $0\leq i \leq k-2$, $G_{i+1}= G_i \cup P_i$ is a $1$-sphere connected subgraph of $G$, where $P_i$ is an ear of $G_i$ in $G$. Since $G_0$ is homeomorphic to $S^{d-1}$, the two maximal connected regions of $G_0$ are clearly bounded by a $(d-1)$-sphere. Assume, inductively, that all maximal connected regions of $G_i$ are bounded by $(d-1)$-spheres, where $i\geq 0$. Because $G_{i+1}$ is a $1$-sphere connected $(d-1)$-dimensional topological hypergraph, the ear $P_i$ of $G_i$ is contained in some maximal connected region $f$ of $G_i$. Each region of $G_i$ other than $f$ is a region of $G_{i+1}$ as well, and so, by the  induction hypothesis, is bounded by a $(d-1)$-sphere. On the other hand, the region $f$ of $G_i$ is divided by $P_i$ into two regions of $G_{i+1}$, and it is easy to see that these regions are also bounded by $(d-1)$-spheres. 
	\end{proof}
	
	%
	%

	\begin{lemma}\label{sphere}
		In a $(d-1)$-dimensional topological hypergraph $G$, if $G$ is $2$-sphere connected, then the neighbors of any vertex lie on a common $(d-1)$-sphere. 
	\end{lemma}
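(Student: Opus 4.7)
My plan is to exhibit a $(d-1)$-sphere subcomplex of $G$ containing $N(v)$ as the topological boundary of a $d$-dimensional neighborhood of $v$, built out of the regions adjacent to $v$.

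First, realize $G$ as a subset of $\mathbb{R}^d$ (equivalently $S^d$); by Lemma~\ref{E1}, each maximal connected region of the complement is bounded by a $(d-1)$-sphere.  Let $\mathrm{St}(v)$ denote the union of the relative interiors of all $(d-1)$-dimensional hyperedges of $G$ incident to $v$, and define
\[
U \;:=\; \{v\} \;\cup\; \mathrm{St}(v) \;\cup\; \bigcup_{R\ \text{adjacent to}\ v} R,
\]
where the union ranges over the finitely many maximal connected regions $R$ of $\mathbb{R}^d\setminus G$ whose closure contains $v$.  Intuitively, $U$ is the open $d$-dimensional neighborhood of $v$ obtained by including $v$ and filling in the hyperedges and regions that touch $v$.

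Second, I would show that $U$ is homeomorphic to an open $d$-ball.  The triviality of $\pi_i(G)$ for $i\le d-2$ (Definition~\ref{general}), together with the closure hypothesis (Definition~\ref{closed topological-hypergraph}), supplies the correct local model near $v$, while the $(d+1)$-connectivity of the $1$-skeleton prevents $U$ from decomposing into several balls meeting only at $v$.  Concretely, if $U\setminus\{v\}$ were disconnected, then, by selecting one vertex from each $(d-2)$-face shared between hyperedges lying in different components of $\mathrm{St}(v)$, one could exhibit a vertex cut of size at most $d$ in the $1$-skeleton, contradicting the $(d+1)$-connectivity hypothesis.

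Third, once $U$ is known to be an open $d$-ball, its topological boundary $\partial U$ is a $(d-1)$-sphere which sits inside $G$ as a subcomplex.  Every vertex $u\in N(v)$ lies on $\partial U$: the edge $uv$ is a $1$-face of some $(d-1)$-dimensional hyperedge $T\in \mathrm{St}(v)$, and $T$ borders some region $R$ adjacent to $v$, so $u\in \partial R\subseteq\partial U$.  Setting $\Sigma:=\partial U$ yields the required $(d-1)$-sphere passing through all of $N(v)$.

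The principal obstacle I foresee is the second step: rigorously showing that $U$ is a $d$-ball.  This is precisely where the $(d+1)$-connectivity hypothesis is genuinely used, via an interplay between the low-dimensional homotopy-triviality of $G$ and a graph-theoretic separation argument on the $1$-skeleton.  Once this local-to-global statement is in hand, the remainder reduces to the standard topological identification $\partial(\text{open } d\text{-ball})\cong S^{d-1}$.
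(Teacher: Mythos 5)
Your construction is the right one, and your set $U$ is in fact precisely the region of $\mathbb{R}^d\setminus(G-v)$ that contains the deleted vertex $v$: when $v$ and all hyperedges incident to $v$ are removed, the open regions of $\mathbb{R}^d\setminus G$ adjacent to $v$, together with $\mathrm{St}(v)$ and $\{v\}$ itself, merge into a single maximal connected region of $\mathbb{R}^d\setminus(G-v)$. Seeing $U$ this way dissolves your ``principal obstacle.'' The paper's proof is exactly this shortcut: since $G$ has a $(d+1)$-connected $1$-skeleton, the $1$-skeleton of $G-v$ is $d$-connected, so Lemma~\ref{E1} applies directly to $G-v$ and already asserts that each maximal connected region of $\mathbb{R}^d\setminus(G-v)$ --- in particular the region $f$ containing $v$ --- is bounded by a $(d-1)$-sphere. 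The neighbors of $v$ all lie on $\partial f$, and you are done. There is no need to prove $U$ is a $d$-ball or to run a separate separation/cut argument on $\mathrm{St}(v)$; the role that the $(d+1)$-connectivity hypothesis plays is simply to guarantee the $d$-connectivity needed for Lemma~\ref{E1} applied to $G-v$. So the idea is the same, but you applied Lemma~\ref{E1} to $G$ rather than to $G-v$; applying it to $G-v$ is what closes the gap you identified.
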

	
	\begin{proof}
		Let $v$ be a vertex of $G$, then $G-v$ is $1$-sphere connected, so each maximal connected region of $G-v$ is bounded by a sphere according to Lemma \ref{E1}. If $f$ is the region of $G-v$ in which the vertex $v$ was situated, the neighbors of $v$ lie on its bounding sphere $\partial(f)$. 
		
	\end{proof}

	\section{$S$-component and Connectivity}\label{connectivity}
	
	
	\begin{lemma}\label{10.33}
		Let $G$ be a $(d-1)$-dimensional topological hypergraph with a $(d-2)$-dimensional $1$-sphere cut $\{x_1,x_2, ..., x_p\}$, then each marked $\{x_1,x_2, ..., x_p\}$-component of $G$ is isomorphic to a minor of $G$. 
	\end{lemma}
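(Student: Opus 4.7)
The plan is to imitate the classical graph-theoretic proof (the $d=2$ case is Lemma 10.33 of Bondy and Murty \cite{bondy2008graph}). Write $S=\{x_1,\ldots,x_d\}$, let $H$ be a marked $S$-component with marker hyperedge $e$, and put $X = V(H)\setminus S$ and $\bar X = V(G)\setminus V(H)$. The subhypergraph $H-e$ is literally equal to the induced subhypergraph $G[V(H)]$, obtained from $G$ by deleting all vertices of $\bar X$ together with their incident hyperedges, so the whole content of the lemma is to produce the marker hyperedge $e$ on $S$ as the result of contracting a suitable subhypergraph of $G$ which is internally disjoint from $V(H)$.

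To do this I would choose one of the other $S$-components, call it $H'$, whose interior lies entirely in $\bar X$. Since the $1$-skeleton of $G$ is $d$-connected and $S$ is a $d$-cut, Menger's theorem (applied inside $H'\cup S$) yields an interior vertex $w$ of $H'$ together with $d$ internally disjoint $wx_i$-paths $P_1,\ldots,P_d$. I would then lift this ``$d$-star'' $T=P_1\cup\cdots\cup P_d$ to the $(d-1)$-dimensional level of $H'$: using the hyper ear decomposition of $H'$ provided by Lemma \ref{ear-exist} together with Lemma \ref{E1}, I would enumerate a finite sequence of $(d-1)$-hyperedges of $H'$ whose union contains $T$, and contract them one at a time, each contraction identifying internal vertices of $H'$ and pulling remaining attachment-vertices closer to $S$. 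After all contractions are performed, every vertex of $\bar X$ still present has been identified with some $x_i\in S$, and the $(d-1)$-hyperedges on $S$ accumulated in the process are merged, via Definition \ref{mul}, into a single $(d-1)$-hyperedge whose vertex set is exactly $S$; this is the required marker $e$.

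Finally, I would delete every $(d-1)$-hyperedge of the resulting object that does not belong to $H-e$ or to the merged marker, and delete any stray vertices. What remains is isomorphic to the marked $S$-component $H$, exhibiting $H$ as a minor of $G$ in the sense of the contraction/deletion operations introduced earlier in the paper.

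The main obstacle I expect is the middle step: showing that the sequence of contractions inside $H'$ really does collapse all of $\bar X$ down to $S$ and produces exactly one $(d-1)$-hyperedge on $S$, rather than several inequivalent leftover $(d-1)$-dimensional fragments or a structure of the wrong dimension. This requires careful bookkeeping using the closed-hypergraph hypothesis (Definition \ref{closed topological-hypergraph}) and the ear-by-ear structure of Section \ref{section-ED}. In particular one needs to verify that at each stage the intermediate object remains a legitimate $(d-1)$-dimensional topological hypergraph, so that Lemmas \ref{attach} and \ref{attach2} can be invoked to keep the relevant homotopy groups trivial after every contraction.
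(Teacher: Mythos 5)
Your proposal takes essentially the same route as the paper: choose another $S$-component $H'$, contract its interior onto $S$ to produce the marker hyperedge $e$ (the paper invokes the hyper ear decomposition to find a $(d-1)$-ball $B \subseteq H'$ with $B \cup e$ a $(d-1)$-sphere, then asserts that $H'$ may be contracted to a single hyperedge), and discard the rest. The paper's version is much terser and does not address the bookkeeping obstacle you flag at the end — it simply states ``it is easy to verify that $H$ is isomorphic to a minor of $G$ by contracting $H'$'' — so your Menger-plus-ear-decomposition elaboration, while not strictly necessary for the paper's standard of rigor, is a more careful rendering of the same idea rather than a genuinely different argument.
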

	
	\begin{proof}
		Let $H$ be an $\{x_1,x_2, ..., x_p\}$-component of $G$, with marker topological hyperedge $e$. Let $H'$ be another $\{x_1,x_2, ..., x_p\}$-component of $G$, with marker topological hyperedge $e$, then there is a $(d-1)$-ball $B$ such that $B\subseteq H'$ and $B\cup e$ is a $(d-1)$-sphere by Lemma \ref{ear-exist}. It is easy to verify that $H$ is isomorphic to a minor of $G$ by contract $H'$ into a single topological hyperedge $e$. 
		
	\end{proof}

	\begin{lemma}\label{Exercise 10.4.1}
		Let $G_1$ and $G_2$ be $(d-1)$-dimensional topological hypergraphs whose intersection is isomorphic to a $(d-1)$-dimensional topological hyperedge $\sigma_S$ with $V(\sigma_S)= \{x_1,x_2, ..., x_p\}$, 
		then $G_1\cup G_2$ is an $\mathbb{R}^d$-hypergraph. 
	\end{lemma}
	
	\begin{proof}
		Let $H$ be a $(d-1)$-hyperplane, $\sigma_S\subseteq H$. At this point, the hyperplane $H$ divides $\mathbb{R}^d$ into two disconnected regions, denoted by $R_1$ and $R_2$, respectively. We embed $G_1$ into $R_1$ and $G_2$ into $R_2$ in such a way that $G_1$ and $G_2$ intersect only at $\sigma_S$. 
		
		By contradiction, suppose the $i$-th homotopy group of $G_1 \cup G_2$ is nontrivial for some $i \in \{1, 2, \ldots, d-2\}$, then there must exist an $i$-sphere $S^i$ that cannot be continuously contracted to the base point. If $S^i$ belongs to either $G_1$ or $G_2$, then it can be continuously contracted to the base point, which leads to a contradiction. Therefore, $S^i$ must intersect both $G_1$ and $G_2$. Let $S^i\cap G_1= L_1$, $S^i\cap G_2= L_2$. We first transform $L_1$ into $L_3$ by homotopy, such that $L_3$ belongs to $H$. It is easy to verify that $L_2$ and $L_3$ belong to $G_2$, thus they can be continuously contracted to the base point. By combining the two homotopy transformations, we obtain that $S^i$ can be continuously contracted to the base point, a contradiction. In conclusion, the assumption is invalid, and the theorem is proven.
		
	\end{proof}

	\begin{lemma}\label{10.34}
		Let $G$ be a $(d-1)$-dimensional topological hypergraph with a $(d-2)$-dimensional $1$-sphere cut $\{x_1,x_2, ..., x_p\}$, then $G$ is an $\mathbb{R}^d$-hypergraph if and only if each of its marked $\{x_1,x_2, ..., x_p\}$-components is an $\mathbb{R}^d$-hypergraph. 
	\end{lemma}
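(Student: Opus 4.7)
The claim is an equivalence and I would prove the two directions separately, invoking the earlier Lemmas \ref{10.33} and \ref{Exercise 10.4.1}.

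For the forward direction, suppose $G$ is an $\mathbb{R}^d$-hypergraph. By Lemma \ref{10.33} each marked $\{x_1,\ldots,x_d\}$-component of $G$ is isomorphic to a minor of $G$. Since the minor operations on $(d-1)$-dimensional topological hypergraphs -- deletion and contraction of topological hyperedges -- manifestly preserve embeddability in $\mathbb{R}^d$ (deletion is immediate; contraction is realized by continuously collapsing the embedded hyperedge inside the ambient embedding), each marked component, being a minor of the embeddable hypergraph $G$, is itself an $\mathbb{R}^d$-hypergraph.

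For the backward direction, assume that every marked component $H_1, H_2, \ldots, H_k$ is an $\mathbb{R}^d$-hypergraph; since $\{x_1,\ldots,x_d\}$ is a vertex cut we have $k \geq 2$. I would induct on $k$. In the base case $k = 2$, the shared marker $M$ is a $(d-1)$-simplex on $\{x_1,\ldots,x_d\}$ whose boundary is precisely the $(d-2)$-dimensional hypergraph $K_d$ of Lemma \ref{Exercise 10.4.1}. Fix $\mathbb{R}^d$-embeddings of $H_1$ and $H_2$; using the generalized Schoenflies theorem (Lemma \ref{Schoenflies}) and the local flatness of these embeddings, I would apply an ambient homeomorphism of $\mathbb{R}^d$ to each so that $M$ lies on a common hyperplane $P$, with $H_1 \backslash M$ in one closed half-space $R_1$ and $H_2 \backslash M$ in the opposite closed half-space $R_2$ -- the natural higher-dimensional analog of placing two $S$-components of a planar graph on opposite sides of the line through a 2-cut. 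The union then gives an embedding of $H_1 \cup H_2$ in $\mathbb{R}^d$, and deleting the marker (if $M \notin E(G)$) recovers an embedding of $G$. Triviality of the homotopy groups is verified by adapting the argument in the proof of Lemma \ref{Exercise 10.4.1}: any $i$-sphere in $H_1 \cup H_2$ with $i \leq d-2$ decomposes as $L_1 \cup L_2$ with $L_j \subseteq H_j$, and $L_1$ can be continuously pushed through the half-space $R_1$ onto $M \subseteq H_2$, whereupon the triviality of $\pi_i(H_2)$ contracts the combined loop to a point.

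For the inductive step $k \geq 3$, set $G^{\ast} := \bigcup_{i=1}^{k-1} H_i$, which is itself a $(d-1)$-dimensional topological hypergraph whose marked $\{x_1,\ldots,x_d\}$-decomposition is exactly $H_1, \ldots, H_{k-1}$; by the inductive hypothesis $G^{\ast}$ is an $\mathbb{R}^d$-hypergraph, and applying the $k = 2$ case to $G^{\ast}$ and $H_k$ delivers $G = (G^{\ast} \cup H_k) \backslash M$. The main obstacle is the geometric harmonization step in the base case: the two given $\mathbb{R}^d$-embeddings of $H_1$ and $H_2$ are a priori incompatible, so one must construct an ambient homeomorphism of $\mathbb{R}^d$ that simultaneously flattens the marker $M$ onto the prescribed hyperplane $P$ and pushes each $H_i \backslash M$ into its assigned half-space. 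This is precisely where the full strength of Lemma \ref{Schoenflies} and local flatness is needed, in order to rule out wild embeddings of $M$ and its complement analogous to Alexander's horned sphere.
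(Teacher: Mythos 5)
Your proposal is correct and takes essentially the same route as the paper: the forward direction invokes Lemma~\ref{10.33} to realize each marked component as a minor of $G$, and the backward direction combines Lemma~\ref{Exercise 10.4.1} with induction on the number $k$ of marked components, adding the marker hyperedge and deleting it at the end. The geometric harmonization via the hyperplane $P$ and the homotopy-group check that you spell out in the base case are precisely the content that the paper delegates to Lemma~\ref{Exercise 10.4.1}'s own proof, so you have simply inlined that argument rather than diverging from it.
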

	
	\begin{proof}
		Suppose, first, that $G$ is an $\mathbb{R}^d$-hypergraph. By Lemma \ref{10.33}, each marked $\{x_1,x_2, ..., x_p\}$-component of $G$ is isomorphic to a minor of $G$, hence is $\mathbb{R}^d$-hypergraph. 
		
		Conversely, suppose that $G$ has $k$ marked $\{x_1,x_2, ..., x_p\}$-components each of which is an $\mathbb{R}^d$-hypergraph. Let $e$ denote their common marker topological hyperedge. Applying Lemma \ref{Exercise 10.4.1} and induction on $k$, it follows that $G + e$ is an $\mathbb{R}^d$-hypergraph, hence so is $G$. 
	\end{proof}
	
	By Lemma \ref{10.34}, we know that to prove a $(d-1)$-dimensional topological hypergraph can be embedded in $\mathbb{R}^d$, it is sufficient to show that all of its marked $\{x_1,x_2, ..., x_p\}$-components can be embedded in $\mathbb{R}^d$.


	Before proving Theorem \ref{anti-minor}, we need a lemma regarding sphere connectivity. 
	
	\begin{lemma}
		\label{connected}
		Let $G$ be a $2$-sphere connected topological hypergraph on at least $(d+2)$ vertices, then $G$ contains an edge $e$ such that $G/e$ is $2$-sphere connected. 
	\end{lemma}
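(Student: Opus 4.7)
The plan is to adapt the classical contractible-edge argument of Thomassen (originally for $3$-connected graphs) to the $(d+1)$-connected setting. I will argue by contradiction: assume that no edge $e$ of $G$ makes $G/e$ remain $(d+1)$-connected. Then for every edge $e=uv$ the graph $G/e$ admits a vertex cut of size at most $d$; lifting this cut back through the contraction and using that $G$ itself has no $d$-cut, one shows that every edge $e=uv$ of $G$ is contained in a $(d+1)$-cut $S$ with $\{u,v\}\subseteq S$.

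Next, among all such pairs $(e,S)$, I would choose one that minimises the number of vertices of the smallest component of $G-S$; call this component $H_0$ and the chosen cut $S_0$. Because $S_0$ is a minimum separator, each vertex of $S_0$ has a neighbour inside $V(H_0)$ (otherwise deleting that vertex from $S_0$ would already disconnect $H_0$ from the rest of $G$, contradicting $(d+1)$-connectivity). Since $|S_0|=d+1\geq 3$, there is some $w\in S_0\setminus\{u,v\}$ together with a vertex $x\in V(H_0)$ adjacent to $w$ in $G$. Applying the standing assumption to the edge $wx$ yields a second $(d+1)$-cut $S_1$ with $\{w,x\}\subseteq S_1$.

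The main step is then a submodularity argument comparing $S_0$ and $S_1$. Writing $A$ for the side of $G-S_0$ containing $H_0$ and $C,D$ for the two sides of $G-S_1$, I will analyse the corner sets $V(H_0)\cap C$ and $V(H_0)\cap D$. Using the standard submodular inequality $|S_0\cap S_1|+|S_0\cup S_1|\leq |S_0|+|S_1|$ on vertex separators of $G$, together with the fact that $w\in S_0$ has moved into the \emph{interior} of one side of $S_1$ while $x$ has moved from $H_0$ into $S_1$ itself, one of the corners must form a nonempty component of $G-S'$ for some $(d+1)$-cut $S'\subseteq S_0\cup S_1$, and this component has strictly fewer vertices than $H_0$. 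Since $S'$ necessarily contains at least one edge of $G$ (as an endpoint-pair of a shortest path from the new component back through $S'$), we can apply the hypothesis to that edge and contradict the minimality of $|V(H_0)|$.

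The hard part will be the submodular step. When $d=2$ the cut has only three vertices and the case analysis is essentially three-way, but for general $d$ all four corner sets $A\cap C,\,A\cap D,\,(G\setminus(A\cup S_0))\cap C,\,(G\setminus(A\cup S_0))\cap D$ can be simultaneously nonempty, and the book-keeping needed to isolate a strictly smaller component of some new $(d+1)$-cut becomes delicate. One must track carefully how the vertices $u,v,w,x$ are redistributed across $S_0\cap S_1$, $S_0\setminus S_1$, $S_1\setminus S_0$ and the two sides, and ensure that the new, smaller separator $S'$ still carries an edge to which the standing assumption can be applied. I expect this to be the sole substantive obstacle; once it is resolved, the contradiction closes the argument.
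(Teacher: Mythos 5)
You flag the submodularity/corner step as the sole substantive obstacle, and in fact it cannot be overcome, because the lemma is false for $d\ge 3$. Take $G=C_6^2$, the square of the $6$-cycle, with vertex set $\{1,\dots,6\}$ and $i\sim j$ iff $|i-j|\in\{1,2\}\pmod{6}$ (equivalently, $G$ is the octahedron $K_{2,2,2}$). This graph is $4$-connected on $6\ge d+2$ vertices (so $(d+1)$-connected with $d=3$), yet every edge contraction produces a $3$-cut: $C_6^2/12$ has the cut $\{z,4,5\}$ separating $3$ from $6$, and $C_6^2/13$ has the cut $\{z,4,6\}$ separating $2$ from $5$; by the rotational symmetry of $C_6^2$ these two orbits cover all edges. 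Thomassen's contractible-edge theorem is a genuinely $3$-connected phenomenon and does not lift to higher connectivity, so the delicate book-keeping you anticipate is precisely where the argument is destined to break: no choice of minimal fragment and no assignment of $u,v,w,x$ to the corners can extract a strictly smaller fragment, because in this example there is none.

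For comparison, the paper's proof is the other standard variant of Thomassen's argument (maximising the fragment $F$ rather than minimising), and it has a gap at the assertion that $H=G[V(F)\cup\{x,y,z_2,\dots,z_{d-1}\}]$ is $d$-connected. For $d=2$ this is sound because the marker edge $xy$ is actually present in $G$, so $H$ coincides with the marked $\{x,y\}$-component of $G-z_1$; for $d\ge 3$ one would need a clique on all of $\{x,y,z_2,\dots,z_{d-1}\}$, which is not available. In the octahedron example with $e=12$, cut $\{1,2,4,5\}$, $z_1=4$ and $F=\{3\}$, the resulting $H=G[\{1,2,3,5\}]$ is $K_4$ minus the edge $25$, hence only $2$-connected. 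So neither your route nor the paper's closes the argument; the lemma needs a corrected statement or additional hypotheses (for instance, structural restrictions coming from $G$ being the $1$-skeleton of $U^{d-1}(G)$) before any version of the Thomassen argument can succeed.
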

	
	\begin{proof}
		We now distinguish two cases.
		
		\noindent\textbf{Case 1.} There exists an edge $e = xy$ in $G$ such that the vertex connectivity of the $1$-skeleton of $G/e$ is the same as that of $G$.
		
		\noindent\textbf{Case 2.} For every edge $e = xy$ in $G$, the vertex connectivity of $G/e$ is strictly smaller than that of $G$.
		
		In what follows, we only prove Case~1, since Case~2 can be established by analogous arguments.

		Suppose the theorem is false. Then, for any edge $e = xy$ of $G$, the contraction $G/e$ is not $2$-sphere connected. Let $w$ be the vertex resulting from the contraction of $e$. 
		By Lemma \ref{not-c}, there exists vertex set $\{x, y, z_1, z_2, ..., z_{\kappa -1}\}$ such that $\{x, y, z_1, z_2, ..., z_{\kappa -1}\}$ is a $2$-sphere cut of $G$. 
		
		Choose $e=xy$ and $\{z_1, z_2, ..., z_{\kappa -1}\}$ in such a way that $G - \{x,y,z_1, z_2, ..., z_{\kappa -1}\}$ has a component $F$ with as many vertices as possible. 
		Consider the graph $G - \{z_{\kappa -1}\}$. Because $G$ is $2$-sphere connected, $G- \{z_{\kappa -1}\}$ is $1$-sphere connected. Moreover $G-\{z_{\kappa -1}\}$ has the $1$-sphere cut $\{x,y, z_1, ..., z_{\kappa -2}\}$. It follows that the $\{x,y, z_1, ..., z_{\kappa -2}\}$-component $H = G[V (F) \cup \{x,y, z_1, ..., z_{\kappa -2}\}]$ is $1$-sphere connected. 
		
		Let $u$ be a neighbour of $z_{\kappa -1}$ in a component of $G - \{x,y,z_1, z_2, ..., z_{\kappa -1}\}$ different from $F$. Since $f = z_{\kappa -1}u$ is an edge of $G$, and $G$ is a counterexample to Lemma \ref{connected}, there is a vertex set $\{v_1, v_2, ..., v_{\kappa -1}\}$ such that $\{z_{\kappa -1},u, v_1, v_2, ..., v_{\kappa-1}\}$ is a $2$-sphere cut of $G$, too. Here we assume that the vertex connectivity of the $1$-skeleton of $G/f$ is the same as that of $G/e$. If the vertex connectivity of the $1$-skeleton of $G/f$ is smaller than that of $G/e$, the situation reduces to an easier case. The vertices $\{v_1, v_2, ..., v_{\kappa -1}\}$ might or might not lie in $H$. 
		
		Let $v$ denote the vertex obtained by contracting the edge $f = z_{\kappa-1}u$. Observe that the induced subgraph $G/f[\{v_1, v_2, \dots, v_{\kappa-1}, v\}]$ is a $(d-2)$-sphere. Consequently, the induced subgraph $G[\{v_1, v_2, \dots, v_{\kappa-1}\}]$ is a $(d-2)$-ball. 
		
		Moreover, because $H$ is $1$-sphere connected and $G[\{v_1, v_2, ..., v_{\kappa -1}\}]$ is not a $(d-2)$-sphere, $H-\{v_1, v_2, ..., v_{\kappa -1}\}$ is connected (where, if there exists $v_i\in \{v_1, v_2, ..., v_{\kappa -1}\}$ such that $v_i \in V (H)$, we set $H- v_i = H$), and thus is contained in a component of $G - \{z,u, v_1, v_2, ..., v_{\kappa -1}\}$. But this component has more vertices than $F$ (because $H$ has $\kappa$ more vertices than $F$), contradicting the choice of the edge $e$ and the vertex $v$. 
		
	\end{proof}
	
	\begin{lemma}\label{not-c}
		Let $G$ be a $2$-sphere connected topological hypergraph on at least $(d+2)$ vertices, and let $e = xy$ be an edge of $G$ such that $G/e$ is not $2$-sphere connected. Then there exist some vertices such that $G[\{x,y, z_1, z_2, ..., z_{\kappa -1}\}]$ is a $2$-sphere cut of $G$ ($\kappa$ is the vertex connectivity of the $1$-skeleton of $G/e$). 
	\end{lemma}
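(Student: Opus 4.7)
The plan is to lift a small vertex cut of $G/e$ back to $G$ and show that it is forced to contain both endpoints of $e$. Let $w$ denote the vertex of $G/e$ obtained by contracting $e$. Since $G/e$ is not $(d+1)$-connected, there exists a subset $S \subseteq V(G/e)$ with $|S| \leq d$ such that $G/e - S$ is (genuinely) disconnected. The proof proceeds by a case analysis on whether or not $w \in S$.

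If $w \notin S$, then I would regard $S$ as a subset of $V(G) \setminus \{x,y\}$ and observe that $G - S$ has the same edge structure as $G/e - S$ except that $w$ is split into the adjacent pair $\{x,y\}$. Since $x$ and $y$ remain joined by $e$ in $G - S$, each component of $G/e - S$ lifts verbatim to a component of $G - S$ (the component containing $w$ simply gains the pair $\{x,y\}$), so $G - S$ is disconnected. This would give a vertex cut of $G$ of size at most $d$, contradicting $(d+1)$-connectivity. Hence $w \in S$, and I set $S' := (S \setminus \{w\}) \cup \{x,y\}$. The graphs $G - S'$ and $G/e - S$ coincide on the common vertex set $V(G) \setminus \{x,y\} = V(G/e) \setminus \{w\}$, so $G - S'$ is disconnected. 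Since $|S'| = |S| + 1 \leq d+1$ and $G$ is $(d+1)$-connected, one must have $|S'| = d+1$ exactly, producing a cut of the required form $\{x,y,z_1,z_2,\ldots,z_{d-1}\}$ with $\{z_1,\ldots,z_{d-1}\} := S \setminus \{w\}$.

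The only delicate point is the boundary regime $|V(G)| = d+2$, in which $G/e$ has only $d+1$ vertices and automatically fails to be $(d+1)$-connected on cardinality grounds alone; here one must verify that the disconnection witnessing the failure of $(d+1)$-connectivity in $G/e$ is genuine (i.e.\ leaves at least two nontrivial components) before the lifting argument yields an honest $(d+1)$-cut of $G$. Since Lemma~\ref{not-c} is invoked inside the proof of Lemma~\ref{connected} in a context where $G$ has strictly more than $d+2$ vertices, this bookkeeping is harmless. Apart from this small-case verification, the argument is a direct case split on the position of the contracted vertex $w$, and I do not expect any serious obstacle.
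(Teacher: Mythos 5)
Your proof is correct and follows essentially the same route as the paper's: both arguments take a cut set of $G/e$ of size at most $d$, observe that if the contracted vertex $w$ is not in it then the set would also disconnect $G$ (contradicting $(d+1)$-connectivity), and otherwise replace $w$ by $\{x,y\}$ to obtain the claimed $(d+1)$-cut of $G$. Your observation about the boundary case $|V(G)|=d+2$ is a genuine sharpening that the paper glosses over: when $G=K_{d+2}$ the graph $G/e=K_{d+1}$ fails to be $(d+1)$-connected for cardinality reasons but has no vertex cut at all, so both your argument and the paper's implicitly require $|V(G)|\geq d+3$ for a bona fide cut to exist.
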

	
	\begin{proof}
		Let $\{z_1, z_2, ..., z_{\kappa -1}, w\}$ be an $1$-sphere cut of $G/e$. At least $\kappa -1$ of these $\kappa$ vertices, say $\{z_1, z_2, ..., z_{\kappa -1}\}$, are not the vertices resulting from the contraction of $e$. Set $F = G- \{z_1, z_2, ..., z_{\kappa-1}\}$, then $F/e  = (G/e) - \{z_1, z_2, ..., z_{\kappa-1}\}$ has a cut vertex, namely $w$. 
		
		If $w$ is not the vertex resulting from the contraction of $e$, then $\{z_1, z_2, ..., z_{\kappa-1}, w\}$ must be a $2$-sphere cut of $G$, a contradiction. Hence $w$ must be the vertex resulting from the contraction of $e$. Therefore $G - \{x,y,z_1, z_2, ..., z_{\kappa-1}\} = (G/e) - \{z_1, z_2, ..., z_{\kappa-1},w\}$ is disconnected, in other words, $\{x,y,z_1, z_2, ..., z_{\kappa-1}\}$ is a $2$-sphere cut of $G$.
	\end{proof}

	\section{Proof of Theorem \ref{anti-minor} and \ref{chromatic-number}}\label{recognizing}
	
	Before proceeding to the proof of Theorem \ref{anti-minor}, we need to define a specific class of structures based on bipartite graphs that cannot be embedded into $\mathbb{R}^d$.

	\subsection{Complete bipartite hypergraph}
	
	
	\begin{definition}[Link of a Vertex \cite{hatcher2002algebraic}]\label{link}
		Let $K$ be a simplicial complex and let $v \in V(K)$ be a vertex of $K$. The \textit{link} of $v$ in $K$, denoted by $\mathrm{Lk}(v, K)$ (or simply $\mathrm{Lk}(v)$), is the subcomplex of $K$ consisting of all simplices $\sigma \in K$ such that $v \notin \sigma$ but $\{v\} \cup \sigma \in K$. Formally:
		\[
		\mathrm{Lk}(v, K) = \{ \sigma \in K \mid v \notin \sigma \text{ and } \{v\} \cup \sigma \in K \}.
		\]
		Geometrically, the link represents the boundary of the neighborhood of $v$ in the complex.
	\end{definition}
	
	\begin{definition}[Suspension \cite{hatcher2002algebraic}]\label{suspension}
		Let $X$ be a topological space. The \textit{suspension} of $X$, denoted by $\Sigma X$, is the quotient space formed from the product $X \times [0, 1]$ by collapsing $X \times \{0\}$ to a single point (the south pole) and $X \times \{1\}$ to a single point (the north pole). Formally:
		\[
		\Sigma X = (X \times [0, 1]) / \sim,
		\]
		where the equivalence relation $\sim$ identifies all points $(x, 0)$ to a point $v_S$ and all points $(x, 1)$ to a point $v_N$. 
		
	\end{definition}


	\begin{definition}[complete dimension-raising function]\label{complete-dimension-raising}
		Let $G$ be a graph, and let $U^x_{comp}(G)$ denote the \emph{complete dimension-raising function} of the graph $G$. 
		This definition can be obtained from Definition~\ref{dimension-raising} by replacing the phrase ``induced $i$-sphere'' with ``chordless $i$-sphere'', while leaving all other parts unchanged. If $G$ is a bipartite graph, then $U^x_{\mathrm{comp}}(G)$ is called a \emph{complete bipartite hypergraph}. 
	\end{definition}
	
	For example, $U^2_{comp}(K_{2,3})$ denotes the complex obtained by filling every $4$-cycle (chordless $1$-sphere) of the complete bipartite graph $K_{2,3}$ with a $2$-ball. It is easy to see that $U^2_{comp}(K_{2,3})$ is homeomorphic to a triangular bipyramid.

	We get the following theorem by Corollary \ref{a+b-3=d} and Lemma \ref{a+b-4=d}. 
	
	\begin{theorem}\label{min-emb}
		Let $K_{a,b}$ be a complete bipartite graph with $a\geq 2$, $b\geq 2$, $a+b\geq 7$ and $d = a + b - 4$. 
		Then $U^{a+b-5}_{comp}(K_{a,b})$ is minimally non-embeddable in $\mathbb{R}^d$. 
		That is, for any $i$-face $f$ is removed from $U^{a+b-5}_{comp}(K_{a,b})$ with $i \geq 2$, 
		the resulting $U^{a+b-5}_{comp}(K_{a,b}) \setminus f$ can be embeded in $\mathbb{R}^d$. 
	\end{theorem}

	Before proving Theorem~\ref{min-emb}, we first establish several auxiliary lemmas.

	By Definition~\ref{suspension}, we obtain the following corollary.
	
	\begin{corollary}
		\label{lemma:suspension}
		Let $\Delta^{d+1}$ be the standard simplex on $d+2$ vertices, and let $(\Delta^{d+1})^{(k)}$ denote its $k$-skeleton. Then $U_{comp}^{d-1}(K_{2,d+2})$ is homeomorphic to the suspension of the $(d-2)$-skeleton of $\Delta^{d+1}$:
		\[
		U_{comp}^{d-1}(K_{2,d+2}) \cong \Sigma \left( (\Delta^{d+1})^{(d-2)} \right).
		\]
	\end{corollary}
	
	%

	\begin{lemma}[Local Embedding Condition \cite{rourke2012introduction}]
		\label{lemma:link}
		If a simplicial complex $K$ embeds piecewise linearly into $\mathbb{R}^d$, then for every vertex $v \in K$, the link $\text{Lk}(v, K)$ must embed into the sphere $S^{d-1}$ (and consequently into $\mathbb{R}^{d-1}$ via stereographic projection, assuming the link is not the entire sphere). 
	\end{lemma}

	%

	The following lemma is a consequence of the Van Kampen-Flores theorem and the theory of piecewise linear embeddings. Instead of appealing to these results abstractly, we provide explicit coordinates for each vertex, yielding a more intuitive construction of an embedding and thereby proving that the $(d-1)$-skeleton $(\Delta^{d+1})^{(d-1)}$ can be embedded into $\mathbb{R}^d$. 
	Moreover, by adding one additional vertex to $(\Delta^{d+1})^{(d-1)}$ and applying Theorem~\ref{jordan-Brouwer}, one can show that the $(d-1)$-skeleton $(\Delta^{d+2})^{(d-1)}$ cannot be embedded into $\mathbb{R}^d$.

	\begin{lemma}\label{lemma:flo}
		$(\Delta^{d+1})^{(d-1)}$ can be embedded into $\mathbb{R}^d$; $(\Delta^{d+2})^{(d-1)}$ cannot be embedded into $\mathbb{R}^d$ for $d \geq 2$.
	\end{lemma}
	
	\begin{proof}
		As shown in Figure \ref{five-vertex-emb}, we assign a coordinate in $\mathbb{R}^d$ to each vertex of the simplex $\Delta^{d+1}$. Let the $d+2$ vertices of $\Delta^{d+1}$ be labeled $v_0, v_1, \dots, v_d, v_{d+1}$. Their coordinates are given respectively as $v_0 = (0, 0, \dots, 0)$, $v_1 = (1, 0, \dots, 0)$, $v_2 = (0, 1, \dots, 0)$, ..., $\dots$, $v_d = (0, \dots, 0, 1)$, and $v_{d+1} = (\frac{1}{d+1}, \frac{1}{d+1}, \dots, \frac{1}{d+1})$ (The vertex $v_{d+1}$ is located at the geometric center of the simplex with vertices $v_0, v_1, \dots, v_d$). Taking any $d$ vertices from $\{v_0, v_1, \dots, v_{d+1}\}$ will form a $(d-1)$-simplex. In this case, it is easy to observe that the final complex is the $(d-1)$-skeleton of $\Delta^{d+1}$, and it can be embedded into $\mathbb{R}^d$. 
		
		Furthermore, by the Jordan-Brouwer Separation Theorem (Theorem~\ref{jordan-Brouwer}), $(\Delta^{d+2})^{(d-1)}$ cannot be embedded into $\mathbb{R}^d$. 
		
	\end{proof}
	
	\begin{figure}[htbp]
		\centering
		\tdplotsetmaincoords{70}{120}
		
		\begin{tikzpicture}[tdplot_main_coords, scale=4]
			
			\draw[->, gray] (0,0,0) -- (1.2,0,0) node[below left] {$x$};
			\draw[->, gray] (0,0,0) -- (0,1.2,0) node[below right] {$y$};
			\draw[->, gray] (0,0,0) -- (0,0,1.2) node[above] {$z$};
			
			\coordinate (v0) at (0,0,0);
			\coordinate (v1) at (1,0,0);
			\coordinate (v2) at (0,1,0);
			\coordinate (v3) at (0,0,1);
			\coordinate (v4) at (0.3,0.3,0.3);
			
			\filldraw[fill=red!60, opacity=0.45, draw=black]     (v0)--(v1)--(v2)--cycle;
			\filldraw[fill=blue!60, opacity=0.45, draw=black]    (v0)--(v1)--(v3)--cycle;
			\filldraw[fill=green!60, opacity=0.45, draw=black]   (v0)--(v1)--(v4)--cycle;
			\filldraw[fill=orange!70, opacity=0.45, draw=black]  (v0)--(v2)--(v3)--cycle;
			\filldraw[fill=purple!60, opacity=0.45, draw=black]  (v0)--(v2)--(v4)--cycle;
			\filldraw[fill=cyan!60, opacity=0.45, draw=black]    (v0)--(v3)--(v4)--cycle;
			\filldraw[fill=yellow!70!brown, opacity=0.45, draw=black] (v1)--(v2)--(v3)--cycle;
			\filldraw[fill=teal!60, opacity=0.45, draw=black]    (v1)--(v2)--(v4)--cycle;
			\filldraw[fill=magenta!60, opacity=0.45, draw=black] (v1)--(v3)--(v4)--cycle;
			\filldraw[fill=lime!60, opacity=0.45, draw=black]    (v2)--(v3)--(v4)--cycle;
			
			\foreach \v/\name/\x/\y/\z in {
				v0/$v_0$/0/0/0,
				v1/$v_1$/1/0/0,
				v2/$v_2$/0/1/0,
				v3/$v_3$/0/0/1,
				v4/$v_4$/\frac{1}{3}/\frac{1}{3}/\frac{1}{3}
			}
			{
				\filldraw[black] (\v) circle (0.6pt)
				node[above right] {\name\ {\scriptsize $(\x,\y,\z)$}};
			}
			
		\end{tikzpicture}
		
		\caption{A geometric realization of the $2$-skeleton of $\Delta^{4}$ in $\mathbb{R}^3$.
			All $\binom{5}{3}=10$ triangular faces are shown in distinct colors. An illustration showing that the $2$-skeleton of $\Delta^{5}$ cannot be embedded into $\mathbb{R}^3$ is provided in Figure~\ref{fig:right-embedding}.}
		
		\label{five-vertex-emb}
	\end{figure}

	\begin{lemma}\label{a=2,b=d+1}
		$U_{comp}^{d-1}(K_{2,d+1})$ admits an embedding into $\mathbb{R}^d$.
	\end{lemma}
	
	\begin{proof}
		By Lemma~\ref{lemma:suspension}, the complex $U_{comp}^{d-1}(K_{2,d+1})$ is homeomorphic to the suspension of the $(d-2)$-skeleton of $\Delta^{d}$. 
		By Lemma~\ref{lemma:flo}, the $(d-2)$-skeleton $(\Delta^{d})^{(d-2)}$ can be embedded into $\mathbb{R}^{d-1}$. 
		It follows that $U_{comp}^{d-1}(K_{2,d+1})$ admits an embedding into $\mathbb{R}^{d}$.
		
	\end{proof}

	Consider the complete bipartite graph $K_{2,d+1}$ with bipartition $A = \{u_1, u_2\} \quad \text{and} \quad B = \{v_1, v_2, \dots, v_{d+1}\}$. 
	It is straightforward to observe that adding edges between vertices within the set $B$ does not affect the embeddability of the complex $U_{comp}^{d-1}(K_{2,d+1})$. This observation leads to the following corollary.
	
	\begin{corollary}\label{a+b-3=d}
		Let $K_{a,b}$ be a complete bipartite graph and let $d = a + b - 3$. Then $U_{comp}^{d-1}(K_{a,b})$ admits an embedding into $\mathbb{R}^d$.
	\end{corollary}

	\begin{lemma}\label{a=2,b=d+2}
		$U_{comp}^{d-1}(K_{2,d+2})$ cannot be embedded into $\mathbb{R}^d$. 
	\end{lemma}
	
	\begin{proof}
		Let $U = U_{comp}^{d-1}(K_{2,d+2})$. Assume the contrary that there exists an embedding $f: U \to \mathbb{R}^d$.
		Consider the vertex $a_1 \in A$. By Lemma \ref{lemma:link}, the existence of an embedding for $U$ implies that the link of $a_1$, denoted $\text{Lk}(a_1, U)$, must embed into $S^{d-1}$.
		From Lemma \ref{lemma:suspension}, we established that:
		\[ 
		\text{Lk}(a_1, U) \cong (\Delta^{d+1})^{(d-2)},
		\]
		implying that $\text{Lk}(a_1, U)$ is the $(d-2)$-skeleton of a simplex with $d+2$ vertices.
		We attempt to embed this skeleton into $S^{d-1}$ (or equivalently $\mathbb{R}^{d-1}$ since the skeleton is not the whole sphere).
		However, by Lemma \ref{lemma:flo}, $(\Delta^{d+1})^{(d-2)}$ cannot be embedded into $\mathbb{R}^{d-1}$.
		This contradiction implies that $U$ cannot be embedded into $\mathbb{R}^d$.
	\end{proof}
	
	It is now known that $U_{comp}^{d-1}(K_{2,d+2})$ is not embeddable in $\mathbb{R}^d$, whereas $U_{comp}^{d-1}(K_{2,d+1})$ admits an embedding in $\mathbb{R}^d$.
	
	For $U_{comp}^{d-1}(K_{2,d+1})$, let $A=\{a_1,a_2\}$ and $B=\{b_1,b_2,\dots,b_{d+1}\}$. 
	Place the vertices of $B$ on a $(d-1)$-dimensional hyperplane $\mathbb{H}\subset \mathbb{R}^d$, and place $a_1$ and $a_2$ in two distinct connected open regions of $\mathbb{R}^d\setminus \mathbb{H}$. 
	This yields an embedding of $U_{comp}^{d-1}(K_{2,d+1})$ in $\mathbb{R}^d$. 
	In this configuration, there always exists a vertex $b_i\in B$ that is enclosed by a $(d-1)$-sphere. 
	Consequently, if one adds an additional vertex $b_{d+2}$ to the original graph, the resulting complex $U_{comp}^{d-1}(K_{2,d+2})$ is no longer embeddable in $\mathbb{R}^d$.
	
	However, if we delete any $(d-1)$-face $f$ from $U_{comp}^{d-1}(K_{2,d+1})$, then there exists a connected open set $S'$ in $\mathbb{R}^d\setminus U_{comp}^{d-1}(K_{2,d+1})$ such that all vertices in $A=\{a_1,a_2\}$ and $B=\{b_1,b_2,\dots,b_{d+1}\}$ lie on the boundary $\partial S'$. 
	By placing the vertex $b_{d+2}$ in the interior of $S'$, we obtain an embedding of $U_{comp}^{d-1}(K_{2,d+2})\setminus f$ in $\mathbb{R}^d$.
	
	Combining these observations with Lemmas~\ref{a=2,b=d+1} and~\ref{a=2,b=d+2}, we obtain the following corollary.

	\begin{corollary}
		$U = U_{comp}^{d-1}(K_{2,d+2})$ is minimally non-embeddable in $\mathbb{R}^d$. That is, if any $(d-1)$-face is removed from $U$, the resulting $U'$ embeds into $\mathbb{R}^d$.
	\end{corollary}

	We get the following corollary by Definition~\ref{link}. 
	
	\begin{corollary}\label{lemma:reduction}
		Let $v \in A$ be a vertex in the partition $A$ of $K_{a,b}$. The link of $v$ in the complex $U_{comp}^{d-1}(K_{a,b})$, denoted by $\text{Lk}(v)$, is combinatorially isomorphic to the complex $U_{comp}^{d-2}(K_{a-1, b})$.
	\end{corollary}

	\begin{lemma}\label{a+b-4=d}
		The complex $U = U_{comp}^{d-1}(K_{a, d-a+4})$ cannot be embedded into $\mathbb{R}^d$ for any $a \geq 2$.
	\end{lemma}
	
	\begin{proof}
		We proceed by induction on the pair $(a,d)$ with respect to the lexicographic order.
		
		Consider the base case $a=2$. In Lemma \ref{a=2,b=d+2}, we proved that $U_{comp}^{d-1}(K_{2, d+2})$ does not embed in $\mathbb{R}^d$ for $d\geq 3$; and it is ready to verify that  $U_{comp}^{2}(K_{3, 4})$ does not embed in $\mathbb{R}^3$. The base case holds.
		
		Assume that for any integer $2\leq a' < a$ and $3\leq d'< d$, the complex $U_{comp}^{d'-1}(K_{a', d'-a'+4})$ cannot embed into $\mathbb{R}^{d'}$.
		Consider the case for $a$ and $d$. Suppose, for the sake of contradiction, that there exists an embedding:
		\[ 
		f: U_{comp}^{d-1}(K_{a, d-a+4}) \rightarrow \mathbb{R}^d. 
		\]
		
		Let $v \in A$ be a vertex. If the complex embeds in $\mathbb{R}^d$, then the link $\text{Lk}(v)$ must embed in the sphere $S^{d-1}$, and consequently into $\mathbb{R}^{d-1}$. 
		
		By Lemma \ref{lemma:reduction}, we have:
		\[ 
		\text{Lk}(v) \cong U_{comp}^{(d-1)-1}(K_{a-1, (d-1)-(a-1)+4}). 
		\]
		
		
		Thus, the embedding of $\text{Lk}(v)$ into $\mathbb{R}^{d-1}$ would contradict the inductive hypothesis.
		
		Therefore, $U_{comp}^{d-1}(K_{a, d-a+4})$ cannot embed in $\mathbb{R}^d$.
	\end{proof}

	


	\subsection{Proof of Theorem \ref{anti-minor} (necessity)}\label{section9}
	
	
	We get the following lemma by Lemma \ref{lemma:flo}. 
	
	\begin{lemma}\label{comp}
		$U^{d-1}(K_{d+3})$ is a non-$\mathbb{R}^d$-hypergraph. 
	\end{lemma}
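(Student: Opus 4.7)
The plan is first to identify $U^{d-1}(K_{d+3})$ combinatorially, and then to derive a contradiction by an Euler-characteristic computation combined with a face-counting argument once we assume it embeds in $\mathbb{R}^d$.

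First, I would show by induction on $k$ that $U^k(K_n)$ equals the $k$-skeleton of the $(n-1)$-simplex for each $1 \le k \le n-1$. The base case $k=1$ is immediate from $U^1(K_n)=K_n$. For the inductive step, the induced subcomplex of $U^k(K_n)$ on any vertex subset $S$ is the $k$-skeleton of the simplex spanned by $S$; this is homeomorphic to $S^k$ precisely when $|S|=k+2$, and the complement of such a subcomplex in $U^k(K_{d+3})$ is connected because $d+3-(k+2)\ge 1$ at every stage of our construction. Hence passing from $U^k$ to $U^{k+1}$ fills exactly the boundaries of the $(k+1)$-faces of the $(d+2)$-simplex, and inductively $U^{d-1}(K_{d+3})$ is the $(d-1)$-skeleton of the $(d+2)$-simplex, with $\binom{d+3}{i+1}$ cells of dimension $i$ for each $0 \le i \le d-1$.

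Second, I would suppose for contradiction that $U^{d-1}(K_{d+3})$ embeds in $S^d$. By Lemma \ref{structure} (together with Lemma \ref{Schoenflies}), each of the $f$ complementary connected components is an open $d$-ball bounded by a $(d-1)$-sphere triangulated by $(d-1)$-simplices of the complex. The resulting CW decomposition of $S^d$ gives
\[
\chi(S^d) \;=\; \chi\bigl(U^{d-1}(K_{d+3})\bigr) + (-1)^d f.
\]
Using $(1-1)^{d+3}=0$ to complete the alternating sum $\sum_{i=0}^{d-1}(-1)^i\binom{d+3}{i+1}$, one obtains
\[
\chi\bigl(U^{d-1}(K_{d+3})\bigr) \;=\; 1 + (-1)^{d+1}\tfrac{(d+1)(d+2)}{2},
\]
which combined with $\chi(S^d)=1+(-1)^d$ yields
\[
f \;=\; 1 + \tfrac{(d+1)(d+2)}{2}.
\]

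Finally, I would count incidences between $(d-1)$-simplices and complementary regions. Each $(d-1)$-simplex is locally flat in $\mathbb{R}^d$ and therefore has exactly two sides, each contained in some region; summing over all simplices gives $2\binom{d+3}{d}$ (simplex, side) incidences in total. Since each region is an open $d$-ball whose bounding $(d-1)$-sphere admits at least $d+1$ $(d-1)$-simplices in any triangulation (the minimum being realised by $\partial\Delta^d$), one has
\[
(d+1)f \;\le\; 2\binom{d+3}{d}.
\]
Substituting the value of $f$ and simplifying reduces the inequality to $d(d-1)\le 0$, which fails for every $d\ge 2$, producing the desired contradiction.

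The main obstacle is the bookkeeping in the identification step: one must verify at each stage of the dimension-raising that the only induced subcomplexes of the $k$-skeleton of a simplex that are homeomorphic to $S^k$ are the boundaries of $(k+1)$-faces, and that the residual connectivity clause in the definition of an induced $i$-sphere is always met. Once this identification is secured, the Euler-characteristic and counting argument is a natural higher-dimensional analogue of the classical proof that $K_5$ is nonplanar.
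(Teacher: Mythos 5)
Your proof is correct but takes a genuinely different route from the paper. The paper argues via the Jordan--Brouwer Separation Theorem: assuming an embedding, it fixes the induced subcomplex on $\{v_1,\dots,v_{d+2}\}$, observes that each $(d+1)$-subset $V_i$ induces a $(d-1)$-sphere $U^{d-1}(K_{d+3})[V_i]$ that separates $\mathbb{R}^d$, places $v_{d+3}$ inside one such sphere, and then derives a contradiction because the hyperedge joining $v_{d+3}$ to the excluded vertex $v_1$ would have to cross the separating sphere. That is the higher-dimensional analogue of the ``put four vertices down and the fifth cannot be joined to all of them'' topological proof that $K_5$ is nonplanar. Your argument, by contrast, is the higher-dimensional analogue of the Euler-formula/face-counting proof: you identify $U^{d-1}(K_{d+3})$ with the $(d-1)$-skeleton of $\Delta^{d+2}$, compute the Euler characteristic to pin down the number $f$ of complementary regions, and then use the fact that every simplicial $(d-1)$-sphere has at least $d+1$ facets (the dual graph is $d$-regular, hence has at least $d+1$ vertices) to force the inequality $(d+1)f\le 2\binom{d+3}{d}$, which reduces to $d(d-1)\le 0$. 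Your approach is more computational and pins down explicit face counts; the paper's is shorter and more qualitative but relies on correctly locating the separating sphere that isolates $v_{d+3}$. Both make essential use of Lemma~\ref{structure}. The one spot worth tightening in your write-up is the CW-complex bookkeeping: you should note explicitly that by the generalized Schoenflies theorem each complementary region has closed $d$-ball closure, so $S^d / U^{d-1}(K_{d+3})$ is a wedge of $f$ copies of $S^d$, which is what legitimizes $\chi(S^d)=\chi\bigl(U^{d-1}(K_{d+3})\bigr)+(-1)^d f$; and that the boundary sphere of each region is a union of $(d-1)$-cells of the complex, so the incidence count $(d+1)f\le 2\binom{d+3}{d}$ is well-posed.
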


	We observe that, except for complete graphs, every graph listed in Table~\ref{minor} contains a bipartite graph $K_{a,d-a+4}$ as a subgraph. 
	Therefore, by Theorem~\ref{min-emb} and Lemma~\ref{comp}, we obtain the following lemma.

	\begin{lemma}\label{bipa}
		If $G$ is a graph in Table \ref{minor}, then $U^{d-1}(G)$ is a non-$\mathbb{R}^d$-hypergraph. 
	\end{lemma}

	%
	%
	%
	
	\begin{definition}[{\it anti-$d$-dimension minor}]
		If a graph $G$ has a minor in Table \ref{minor}, then we call $G$ an {\it anti-$d$-dimension minor}. 
	\end{definition}

	\subsection{Proof of Theorem \ref{anti-minor} (sufficiency)}\label{last}
	
	\begin{proof}
		In view of Lemmas \ref{10.33} and \ref{10.34}, it suffices to prove Theorem \ref{anti-minor} for $(d-1)$-dimensional topological hypergraph which is $2$-sphere connected. 
		
		Note that $U = U^{d-1}(G)$ is a $(d-1)$-dimensional topological hypergraph. 
		Now we assume that $U$ is a non-$\mathbb{R}^d$-hypergraph, $U$ is $2$-sphere connected, and $U$ is simple. Because all hypergraphs on $(d+2)$ or fewer vertices can be embedded in $\mathbb{R}^d$, we have $|V(U)|\geq d+3$. We proceed by induction on $|V(U)|$. By Lemma \ref{connected}, $G$ contains an $1$-dimensional topological hyperedge $e = xy$ such that $H= U/e$ is $2$-sphere connected. 
		If $H$ is a non-$\mathbb{R}^d$-hypergraph, the $1$-skeleton of $H$ contains a minor in Table \ref{minor}, by induction. Since every minor of $H$ is also a minor of $U$, we deduce that the $1$-skeleton of $U$ contains a minor in Table \ref{minor}, too. So we may assume that $H$ is an $\mathbb{R}^d$-hypergraph. 
		
		Consider an $\mathbb{R}^d$-embedding $H'$ of $H$. Denote by $z$ the vertex of $H$ formed by contracting $e$. Because $H$ is $2$-sphere connected, by Lemma \ref{sphere}, the neighbors of $z$ lie on a $(d-1)$-sphere $S^{d-1}$, the boundary of some polytope $W$ of $H'- z$. Denote by $B_x$ and $B_y$ the bridges of $W$ in $U \backslash e$ that contain the vertices $x$ and $y$, respectively. 
		

		\begin{figure}
			\centering     
			\begin{tikzpicture}[scale=0.9]
				\coordinate (x2) at (0, 4);   
				\coordinate (x1) at (0, -4);  
				\coordinate (y1) at (-4, 0);  
				\coordinate (y2) at (4, 0);   
				
				\coordinate (x) at (-1.1, 2.5);     
				\coordinate (yp1) at (-1.8, -1.2);  
				\coordinate (yp2) at (1.8, 0.8);    
				
				\draw[red, dashed, thick] (y1) -- (y2) node[midway, left=-0.3, above=0.2, text=black] {$B^{d-2}$};
				\draw[red, dashed, thick] (x) -- (x1) node[midway, above=2cm, text=black] {$e_x$};
				\draw[red, dashed, thick] (yp1) -- (yp2) node[midway, left=-2.2, text=black] {$B'$};
				
				\draw[dashed, thick] (y1) to[bend left=20] (x);
				\draw[dashed, thick] (x) to[bend left=20] (y2);
				\draw[dashed, thick] (x) to[bend right=10] (yp2);
				\draw[dashed, thick] (x) to[bend right=15] (yp1);
				\draw[dashed, thick] (y2) to[bend right=15] (yp2);
				\draw[dashed, thick] (yp2) to[bend left=10] (x1);
				
				\draw[dashed, thick] (y1) to[bend left=12] (yp2);
				
				\draw[thick] (0,0) circle (4cm);
				
				\draw[thick] (y1) to[bend right=15] (yp1);
				\draw[thick] (yp1) -- (x1); 
				
				\draw[thick] (yp1) to[bend right=12] (y2);
				
				\foreach \p in {x, x1, x2, y1, y2, yp1, yp2} {
					\filldraw[black] (\p) circle (4pt);
				}
				
				\node[above=0.2cm] at (x2) {\Large $x_2$};
				\node[below=0.2cm] at (x1) {\Large $x_1$};
				\node[left=0.2cm] at (y1) {\Large $y_1$};
				\node[right=0.2cm] at (y2) {\Large $y_2$};
				\node[above=0.2cm] at (x) {\Large $x$};
				\node[below left=0.1cm] at (yp1) {\Large $y'_1$};
				\node[above right=0.1cm] at (yp2) {\Large $y'_2$};
				
				\node[above right] at (3, 2.5) {\Large $S_2^{d-1}$};
				\node[below right] at (3.5, -2) {\Large $S_1^{d-1}$};
				
			\end{tikzpicture}
			\caption{$e = xx_1$ is not an edge of $G$, and $d=3$.}
			\label{erwei}
		\end{figure}

		Note that $B_x$ and $B_y$ cannot avoid each other. It follows that $B_x$ and $B_y$ overlap. By Lemma \ref{bridges}, they are therefore either skew or else equivalent $(d+1)$-bridges. In the latter case, it easy to verify that $G$ has a $K_{d+3}$-minor; In the former case, we need to discuss the following subcases. 
		
		Let a $(d-1)$-ball $B' \subset B_y$ divide $S^{d-1}$ into two spheres, denoted $S^{d-1}_1$ and $S^{d-1}_2$. Without loss of generality, assume that the vertex $x$ lies in the interior of $S^{d-1}_2$. Since $B_x$ and $B_y$ are skew, there exists a $(d-1)$-dimensional topological hyperedge $e_x \subset B_x$ that contains both $x$ and $x_1$ (see Figure~\ref{erwei} for an example with $d=3$); and there exists a $(d-1)$-dimensional topological hyperedge $e_x' \subset B_x$ that contains both $x$ and $x_2$. In this case, $e_x$ must intersect $B'$.  		

		We proceed to prove that $S^{d-1}$ contains at least $d+2$ vertices.
		
		First, the intersection $B_y \cap S^{d-1}$ must contain at least $d$ vertices (If the number of vertices in the intersection were strictly less than $d$, it would result in multiple topological hyperedges). Then, there must exist vertices $x_1 \in S_1^{d-1}$ and $x_2 \in S_2^{d-1}$ such that neither $x_1$ nor $x_2$ belongs to $B_y \cap S^{d-1}$. Combining these observations, the total number of vertices on $S^{d-1}$ is at least $d + 2$.
		
		For the case $d=2$, the edges $xx_1$ and $xx_2$ necessarily exist. This immediately implies that $G$ contains a $K_{3,3}$-minor. However, for $d \geq 3$, the existence of the edges $xx_1$ and $xx_2$ is not guaranteed.
		
		We now provide a counterexample where the edge $xx_1$ is absent. As shown in Figure \ref{erwei} for the case $d=3$, the edge $xx_1$ does not exist; instead, there exists a $2$-face $e_x = x y_1' x_1 y_2'$. It is straightforward to verify that $e_x$ intersects $B' \cup S^{d-1}$ by Theorem \ref{jordan-Brouwer}. This confirms that for $d \geq 3$, the edges $xx_1$ and $xx_2$ need not exist.
		
		Having established that $S^{d-1}$ contains at least $d+2$ vertices, we may assume without loss of generality that $|V(S^{d-1})| = d+2$ (any additional vertices can be removed via contraction). Additionally, considering the two internal vertices $x$ and $y$, a minimal skew must contain $d+4$ vertices. Let $M$ denote this set of vertices. 
		We now proceed to analyze the independent sets within $M$.
		
		\medskip
		\noindent \textbf{Case 1.} Every independent set in $M$ consists of a single vertex. In this case, $G$ contains a $K_{d+3}$-minor.
		
		\medskip
		\noindent \textbf{Case 2.} There exists an independent set $A = \{a_1, a_2\} \subset M$. Let $B = M \setminus A = \{b_1, b_2, \dots, b_{d+2}\}$.

		By Theorem~\ref{min-emb}, we know that if a $(d-1)$-dimensional topological hypergraph on $d+4$ vertices cannot embedded into $\mathbb{R}^d$, then every chordless $i$-sphere must be present. However, in our procedure for increasing the dimension of the graph $G$, we fill each induced $i$-sphere with an $(i+1)$-ball. In other words, in order to ensure that the $(d-1)$-dimensional topological hypergraph formed by these $d+4$ vertices cannot embedded into $\mathbb{R}^d$, every chordless $i$-sphere appearing in Theorem~\ref{min-emb} must in fact be an induced $i$-sphere. Consequently, the vertices in the set $B = \{b_1, b_2, \dots, b_{d+2}\}$ must satisfy the following condition:
		
		\begin{itemize}
			\item For any subset $B' \subseteq B$ with $|B'| = d-1$, if the induced subgraph $G[B']$ is not the complete graph $K_{d-1}$, then the induced subgraph $G[V(G)\setminus B']$ is connected.
		\end{itemize}
		
		By Lemma~\ref{jitu}, we obtain the following classification. Note that when $|A| = 2$, \textbf{Case III} in the proof of Lemma~\ref{jitu} cannot occur (otherwise, a $2$-vertex cut would arise; however, by the definition of the marked $S$-decomposition, all $2$-cuts are eliminated during this process). Thus we only consider the remaining \textbf{Case I} and \textbf{Case II}: 
		\begin{itemize}
			\item If $d = 4k$, then $G \cong K_{2 \otimes 2(k+1)}$ or $G \cong \overline{K_2} \wedge \overline{K_4} \wedge K_{4k-2}$;
			\item If $d = 4k+1$, then $G \cong K_{1, 2 \otimes 2(k+1)}$ or $G \cong \overline{K_2} \wedge \overline{K_4} \wedge K_{4k-1}$;
			\item If $d = 4k+2$, then $G \cong K_{2 \otimes (2k+3)}$ or $G \cong \overline{K_2} \wedge \overline{K_4} \wedge K_{4k}$;
			\item If $d = 4k+3$, then $G \cong K_{1, 2 \otimes (2k+3)}$ or $G \cong \overline{K_2} \wedge \overline{K_4} \wedge K_{4k+1}$.
		\end{itemize}

		By Lemma~\ref{jitu}, we proceed to analyze the size of the independent set $A$ case by case; suppose this procedure has reached \textbf{Case $j$} (namely, there exists an independent set $A = \{a_1, a_2, \dots, a_j\} \subset M$).
		
		It should be noted that when the number of vertices in either $A$ or $B=M\setminus A$ exceeds $4$, the set $C \in \{A,B\}$ is required to satisfy the following condition:
		\begin{itemize}
			\item For any subset $C' \subseteq C$ with $|C'| = |C|-3$, if the induced subgraph $G[C']$ is not the complete graph $K_{|C|-3}$, then the induced subgraph $G[V(G)\setminus C']$ is connected.
		\end{itemize}
		
		\medskip
		
		Following the approaches of solving \textbf{Case 1} and \textbf{Case 2}, we classify the graph according to the size of the independent set $A$.  
		For each $j \in \{1, 2, \dots, \lfloor \tfrac{d+4}{2} \rfloor \}$, when $|A| = j$, we obtain the following cases.
		
		
		\noindent \textbf{Case $j$.}
		According to Lemma~\ref{jitu}, it is worth noting that, now for the sets $A$ and $B$,  each admits three possible cases. 
		In principle, this yields nine possible graph classes. 
		However, three of them are redundant and should be excluded. 
		Consequently, only six distinct graph classes arise.
		Therefore, we obtain the following classification.
		
		\begin{itemize}
			\item \textbf{If $d=4k$.}
			\begin{itemize}
				\item If $j=2i$, then $G$ contains one of the following four graphs as minor:
				\[
				K_{2\otimes i}\wedge K_{2\otimes(2k+2-i)},\quad
				K_{2\otimes i}\wedge \overline{K_4}\wedge K_{4k-2i},
				\]
				\[
				\overline{K_4}\wedge K_{2i-4}\wedge \overline{K_4}\wedge K_{4k-2i},\quad
				\overline{K_4}\wedge K_{2i-4}\wedge (K_1\cup K_{4k+3-2i}).
				\]
				\[
				(K_1\cup K_{2i-1})\wedge (K_1\cup K_{4k+3-2i}),\quad
				(K_1\cup K_{2i-1})\wedge K_{2\otimes(2k+2-i)}.
				\]
				\item If $j=2i+1$, then $G$ is isomorphic to one of the following four graphs:
				\[
				K_{1,2\otimes i}\wedge K_{1,2\otimes(2k+1-i)},\quad
				K_{1,2\otimes i}\wedge \overline{K_4}\wedge K_{4k-1-2i},
				\]
				\[
				\overline{K_4}\wedge K_{2i-3}\wedge \overline{K_4}\wedge K_{4k-1-2i},\quad
				\overline{K_4}\wedge K_{2i-3}\wedge (K_1\cup K_{4k+2-2i}).
				\]
				\[
				(K_1\cup K_{2i})\wedge (K_1\cup K_{4k+2-2i}),\quad
				(K_1\cup K_{2i})\wedge K_{1,2\otimes(2k+1-i)}.
				\]
			\end{itemize}
			
			\item \textbf{If $d=4k+1$.}
			\begin{itemize}
				\item If $j=2i$, then $G$ contains one of the following four graphs as minor:
				\[
				K_{2\otimes i}\wedge K_{1,2\otimes(2k+2-i)},\quad
				K_{2\otimes i}\wedge \overline{K_4}\wedge K_{4k+1-2i},
				\]
				\[
				\overline{K_4}\wedge K_{2i-4}\wedge \overline{K_4}\wedge K_{4k+1-2i},\quad
				\overline{K_4}\wedge K_{2i-4}\wedge (K_1\cup K_{4k+4-2i}).
				\]
				\[
				(K_1\cup K_{2i-1})\wedge (K_1\cup K_{4k+4-2i}),\quad
				(K_1\cup K_{2i-1})\wedge K_{1,2\otimes(2k+2-i)}.
				\]
				\item If $j=2i+1$, then $G$ is isomorphic to one of the following four graphs:
				\[
				K_{1,2\otimes i}\wedge K_{2\otimes(2k+2-i)},\quad
				K_{1,2\otimes i}\wedge \overline{K_4}\wedge K_{4k-2i},
				\]
				\[
				\overline{K_4}\wedge K_{2i-3}\wedge \overline{K_4}\wedge K_{4k-2i},\quad
				\overline{K_4}\wedge K_{2i-3}\wedge (K_1\cup K_{4k+3-2i}).
				\]
				\[
				(K_1\cup K_{2i})\wedge (K_1\cup K_{4k+3-2i}),\quad
				(K_1\cup K_{2i})\wedge K_{2\otimes(2k+2-i)}.
				\]
			\end{itemize}
			
			\item \textbf{If $d=4k+2$.}
			\begin{itemize}
				\item If $j=2i$, then $G$ contains one of the following four graphs as minor:
				\[
				K_{2\otimes i}\wedge K_{2\otimes(2k+3-i)},\quad
				K_{2\otimes i}\wedge \overline{K_4}\wedge K_{4k+2-2i},
				\]
				\[
				\overline{K_4}\wedge K_{2i-4}\wedge \overline{K_4}\wedge K_{4k+2-2i},\quad
				\overline{K_4}\wedge K_{2i-4}\wedge (K_1\cup K_{4k+5-2i}).
				\]
				\[
				(K_1\cup K_{2i-1})\wedge (K_1\cup K_{4k+5-2i}),\quad
				(K_1\cup K_{2i-1})\wedge K_{2\otimes(2k+3-i)}.
				\]
				\item If $j=2i+1$, then $G$ is isomorphic to one of the following four graphs:
				\[
				K_{1,2\otimes i}\wedge K_{1,2\otimes(2k+2-i)},\quad
				K_{1,2\otimes i}\wedge \overline{K_4}\wedge K_{4k+1-2i},
				\]
				\[
				\overline{K_4}\wedge K_{2i-3}\wedge \overline{K_4}\wedge K_{4k+1-2i},\quad
				\overline{K_4}\wedge K_{2i-3}\wedge (K_1\cup K_{4k+4-2i}).
				\]
				\[
				(K_1\cup K_{2i})\wedge (K_1\cup K_{4k+4-2i}),\quad
				(K_1\cup K_{2i})\wedge K_{1,2\otimes(2k+2-i)}.
				\]
			\end{itemize}
			
			\item \textbf{If $d=4k+3$.}
			\begin{itemize}
				\item If $j=2i$, then $G$ contains one of the following four graphs as minor:
				\[
				K_{2\otimes i}\wedge K_{1,2\otimes(2k+3-i)},\quad
				K_{2\otimes i}\wedge \overline{K_4}\wedge K_{4k+3-2i},
				\]
				\[
				\overline{K_4}\wedge K_{2i-4}\wedge \overline{K_4}\wedge K_{4k+3-2i},\quad
				\overline{K_4}\wedge K_{2i-4}\wedge (K_1\cup K_{4k+6-2i}).
				\]
				\[
				(K_1\cup K_{2i-1})\wedge (K_1\cup K_{4k+6-2i}),\quad
				(K_1\cup K_{2i-1})\wedge K_{1,2\otimes(2k+3-i)}.
				\]
				\item If $j=2i+1$, then $G$ is isomorphic to one of the following four graphs:
				\[
				K_{1,2\otimes i}\wedge K_{2\otimes(2k+3-i)},\quad
				K_{1,2\otimes i}\wedge \overline{K_4}\wedge K_{4k+2-2i},
				\]
				\[
				\overline{K_4}\wedge K_{2i-3}\wedge \overline{K_4}\wedge K_{4k+2-2i},\quad
				\overline{K_4}\wedge K_{2i-3}\wedge (K_1\cup K_{4k+5-2i}).
				\]
				\[
				(K_1\cup K_{2i})\wedge (K_1\cup K_{4k+5-2i}),\quad
				(K_1\cup K_{2i})\wedge K_{2\otimes(2k+3-i)}.
				\]
			\end{itemize}
		\end{itemize}
		
		Although the above classes of graphs may appear complicated, it suffices to identify the minimal ones among them. 
		
		It should be noted that when the dimension $d$ is small, certain degenerate cases may occur. 
		We list these degenerate situations below.
		
		When $d=3$, the above procedure yields four minors, namely
		\[
		K_6,\quad K_{3,4},\quad K_{1,2\otimes 2},\quad \overline{K_2} \wedge (K_1 \cup K_4).
		\]
		We observe that $K_{3,4} \subseteq K_{1,2\otimes 2}$, and that the graph $\overline{K_2} \wedge (K_1 \cup K_4)$ contains a $2$-cut. 
		Consequently, when $d=3$, the only minors satisfying the required conditions are $K_6$ and $K_{3,4}$.
		
		Similarly, when $d=4$, the minors satisfying the conditions are
		\[
		K_7,\quad K_{4,4},\quad \overline{K_3} \wedge (K_1 \cup K_4).
		\]
		
		In summary, for $d \geq 5$, by exhausting all the cases listed above and eliminating redundancies, 
		we conclude that $G$ must contain one of the graphs in Table~\ref{minor} as a minor.

	\end{proof}

	\subsection{Proof of Lemma \ref{jitu}}
	
	If a complete $p$-partite graph has exactly $q$ vertices in each part, we denote it by $K_{q \otimes p}$. For example, the graph $K_{2,2,2,2}$ can be written as $K_{2 \otimes 4}$, and $K_{1,2,2,2}$ can be written as $K_{1,2 \otimes 3}$.

	\begin{lemma}\label{jitu}
		Let $G$ be a graph with $|V(G)| = n \geq 5$. Suppose that for every subset $V' \subseteq V(G)$ with $|V'| = n-3$, if $G - V'$ is disconnected, then $G[V'] \cong K_{n-3}$. 
		If the graph $G$ is minimal with respect to the above conditions (that is, removing any edge from $G$ causes the conditions to fail), then the following conclusions hold:
		\begin{itemize}
			\item If $n = 2k$, then $G$ is isomorphic to one of the following three classes of graphs:
			\[
			K_{2 \otimes \frac{n}{2}}, \qquad \overline{K_4} \wedge K_{n-4}, \qquad \text{or } K_1 \cup K_{n-1};
			\]
			
			\item If $n = 2k+1$, then $G$ is isomorphic to one of the following three classes of graphs:
			\[
			K_{1, 2 \otimes \frac{n-1}{2}}, \qquad \overline{K_4} \wedge K_{n-4}, \qquad \text{or } K_1 \cup K_{n-1}.
			\]
		\end{itemize}
	\end{lemma}
	
	
	\begin{proof}
		
		\noindent \textbf{Case I.}
		$G$ is a connected graph. 
		For any three vertices of $G$, the induced subgraph on these vertices is connected.
		It is straightforward to see that, in this situation, $G$ is isomorphic to a complete
		graph with a maximum matching removed. More precisely, when $n = 2k$, the graph $G$
		is isomorphic to $K_{2 \otimes \frac{n}{2}}$, and when $n = 2k+1$, the graph $G$ is
		isomorphic to $K_{1, 2 \otimes \frac{n-1}{2}}$. 
		It is straightforward to verify that deleting any edge from a graph in this class causes it to violate the conditions of the theorem. Hence, graphs of this type are minimal.

		\noindent \textbf{Case II.}
		$G$ is a connected graph. 
		There exist three vertices $\{u, v, w\}$ in $G$ such that the induced subgraph $G[\{u, v, w\}]$ is disconnected. 
		
		Without loss of generality, we may assume that there are no edges among the three vertices $\{u,v,w\}$. 
		(The case where some edges are present can be treated similarly, and one can ultimately show that such edges may be deleted.)
		
		In this situation, the induced subgraph $G[V(G)\setminus\{u,v,w\}] \cong K_{n-3}$. 
		If, for every $a' \in \{u,v,w\}$, the vertex $a'$ is adjacent to all vertices in $V(G)\setminus\{u,v,w\}$, then the graph $G$ clearly satisfies the required conditions.
		
		To obtain a graph with the minimum number of edges, we may delete an edge $up$ from the above graph $G$. 
		It is easy to verify that $G\setminus up$ still satisfies the conditions.
		
		If we further delete another edge incident with $u$, say $uq$, then the resulting graph $G' = G \setminus \{up, uq\}$ satisfy that $G' \setminus u$ is a complete graph. 
		This case can be reduced to \textbf{Case III}, and hence we may only delete edges that are not incident with $u$.
		
		Next, consider deleting an edge $vr$ incident with $v$. 
		If $r=p$, then the graph $G' = G \setminus \{up, vr\}$
		still satisfies the required conditions. 
		However, if $r \neq p$, then $G' = G \setminus \{up, vr\}$ does not satisfy the conditions, since graph $G[\{u,w,p\}]$ is disconnected and graph $G' \setminus \{u,w,p\}$ is not complete.
		
		In summary, at most three edges, namely $up$, $vp$, and $wp$, can be deleted from $G$. 
		It is easy to see that the resulting graph is isomorphic to $\overline{K_4} \wedge K_{n-4}$.
		One can verify that the deletion of any edge from $\overline{K_4} \wedge K_{n-4}$ yields a graph that fails to satisfy the conditions of the lemma.

		\noindent \textbf{Case III.}
		$G$ is not a connected graph. It is easy to verify that $G$ is isomorphic to $K_1 \cup K_{n-1}$. 
		
		\medskip
		\begin{remark}
			When $n = 5$, a degenerate situation arises: in this case,
			$\overline{K_4} \wedge K_{1}$ is a proper subgraph of $K_{1,2,2}$, and hence one can
			directly conclude that $G$ must be isomorphic to $\overline{K_4} \wedge K_{1}$.
			When $n \ge 6$, the graph $G$ falls into one of the three cases described above. 
		\end{remark}
	\end{proof}

	\subsection{Proof of Theorem \ref{chromatic-number}}\label{last2}
	
	At present, we have not found an effective method to further improve the upper bound for the chromatic number. 
	Here we only derive a rough upper bound based on the average degree. 
	Theorem \ref{chromatic-number} is the corollary of Theorem \ref{chromatic-number1} and Corollary \ref{nmv}.

	\begin{theorem}\label{chromatic-number1}
		Let $U^{d-1}(G)$ be an $R^d$-hypergraph, $G$ be the $1$-skeleton of $U^{d-1}(G)$, and $E(G)$ be the edge set of $G$, then $|E(G)| \leq (3\cdot 2^{d-2})|V(G)| - 3\cdot 2^{d-2}\cdot(d+1) + \frac{d(d+1)}{2}$ if $|V(G)|\geq d+1$. 
		
	\end{theorem}
	
	\begin{proof}
		Since the purpose of this theorem is to derive an upper bound for the number of edges, 
		we may assume that the closure of each connected open set in 
		$\mathbb{R}^d \setminus U^{d-1}(G)$ is homeomorphic to a $d$-simplex.
		
		By induction on $d$ (denoted by {\bf Induction $\alpha$}). The theorem holds trivially for $d=2$. Assume that it
		holds for all integers less tha $d$, and let $G$ be the $1$-skeleton of  an $R^{d-1}$-hypergraph with 
		$$|E(G)| \leq (3\cdot 2^{d-3})|V(G)| - 3\cdot 2^{d-3}\cdot d + \frac{d(d-1)}{2},$$
		then 
		$$\frac{2|E(G)|}{|V(G)|} \leq 3\cdot 2^{d-2}-\frac{3\cdot 2^{d-3}\cdot d - \frac{d(d-1)}{2}}{|V(G)|}< 3\cdot 2^{d-2},$$
		which implies that there exists a vertex $v\in V(G)$ such that $d_{G}(v)\leq 3\cdot 2^{d-2}-1$. 
		
		Now we prove that the theorem holds for $d$. 
		
		\begin{claim}
			Theorem \ref{chromatic-number1} holds for $d$. 
		\end{claim} 
		
		\begin{proof}
			By induction on $|V(G)|$ (denoted by {\bf Induction $\beta$}). The theorem holds trivially for $|V(G)|=d+1$. Let $m$ be an integer with $m>d+1$. Assume that the theorem holds for $|V(G)|< m$. 
			
			Now we prove the theorem holds for $|V(G)|= m$. 
			
			$\forall u \in V(G)$, let $d_{G}(u)$ be the number of vertices which are adjacent to $u$ and $G_1$ be the $1$-skeleton of $U_1 = U^{d-1}(G)\backslash u$ with $V(G_1)=V(G)\backslash \{u\}$. 
			Observe that after removing $u$, there exists an open set $W$ in $\mathbb{R}^d \setminus U_1$ such that, prior to the deletion of $u$, $u$ lies in the interior of $W$. In this situation, the boundary $\partial W$ forms an $\mathbb{R}^{d-1}$-hypergraph, which we denote by $G_0$. 
			
			
			Observe that $G_0$ is homeomorphic to $S_{d-1}$, and $|V(G_1)|\geq d+1$ since $|V(G)|=m > d+1$, then there exists a vertex $v\in V(G_0)$ such that $d_{G_0}(v)\leq 3\cdot 2^{d-2}-1$ by the induction hypothesis ({\bf Induction $\alpha$}). 
			
			We triangulate $U_1$ into $U_1'$, and let $G_1'$ be the $1$-skeleton of $U_1'$. 
			At this stage, $W$ is the only region that is not a $d$-simplex. Thus it suffices to triangulate $W$ into a collection of $d$-simplices. 
			Without loss of generality, we may assume that the triangulation is obtained by projecting from $v$ to all remaining vertices, edges, and faces. 
			This yields the following equality. 
			\begin{align*}
				|E(G)|-d_{G}(u) &= |E(G_1)|=|E(G_1')|-(d_{G}(u)-d_{G_0}(v)-1),\\
				|E(G_1')|&=|E(G)|-d_{G_0}(v)-1.
			\end{align*}
			
			Observe that $U_1'$ is also an $R^d$-hypergraph, then $|E(G_1')| \leq 3\cdot 2^{d-2}|V(G_1')| - 3\cdot 2^{d-2}\cdot(d+1) + \frac{d(d+1)}{2}$ by the induction hypothesis ({\bf Induction $\beta$}). 
			
			
			On the other hand, $|E(G_1')|=|E(G)|-d_{G_0}(v)-1 \geq |E(G)| -3\cdot 2^{d-2}$ since $d_{G_0}(v)\leq 3\cdot 2^{d-2}-1$, and we also know that $|V(G_1)|=|V(G)|-1$. 
			Therefore, 
			\begin{align*}
				|E(G)|-3\cdot 2^{d-2}&\leq |E(G_1')| \leq  3\cdot 2^{d-2}(|V(G)|-1) - 3\cdot 2^{d-2}\cdot(d+1) + \frac{d(d+1)}{2}, \\
				|E(G)| &\leq 3\cdot 2^{d-2}|V(G)| - 3\cdot 2^{d-2}\cdot(d+1) + \frac{d(d+1)}{2},
			\end{align*}
			and the claim follows by induction. 
			
		\end{proof}
		
		We infer that Theorem~\ref{chromatic-number1} holds for $d$, and Theorem~\ref{chromatic-number} follows by induction. 
		
	\end{proof}

	\begin{corollary}\label{nmv}
		Let $U^{d-1}(G)$ be an $R^d$-hypergraph and $G$ be the $1$-skeleton of $U^{d-1}(G)$, then there exists a $(3\cdot 2^{d-1}-1)^-$-vertex in $G$, where an $i^-$-vertex $v$ represent that $d(v)\leq i$.
		
	\end{corollary}
	
	\begin{proof}
		By Theorem \ref{chromatic-number1}, we have
		$$\frac{2|E(G)|}{|V(G)|} \leq 3\cdot 2^{d-1}-\frac{3\cdot 2^{d-2}\cdot (d+1) - \frac{d(d+1)}{2}}{|V(G)|}< 3\cdot 2^{d-1},$$
		which implies that there exists a $(3\cdot 2^{d-1}-1)^-$-vertex in $G$. 
		
	\end{proof}

	\section{Discharging in $\mathbb{R}^d$}\label{discharging}
	
	In graph theory, the Discharging method is a powerful proof technique primarily used in structural and coloring problems, especially for planar graphs. It gained prominence through its central role in the proof of the Four Color Theorem. 
	Based on the theoretical framework proposed in this paper, we can extend the Discharging method, originally used to address problems on planar graphs, to $\mathbb{R}^d$.
	Although we are currently unable to use the Discharging method to improve Theorem~\ref{chromatic-number}, we have found that this approach can be applied to the coloring problem of $(d-2)$-faces in $d$-uniform $\mathbb{R}^d$-hypergraphs. For this purpose, we first introduce the following definitions.
	
	
	
	\begin{definition}[$i$-dimensional $k$-coloring]
		Let $H$ be an $\mathbb{R}^d$-hypergraph, $A_i$ be the set of $i$-face of $G$ (Definition~\ref{def:Ai}),  		
		and $\phi : A_i \rightarrow \{1,2, \dots, k\}$ be a coloring. For each $a_i \in A_i$, denote by $c(a_i)$ the color assigned to $a_i$ under this mapping. If, for every $a_{i+1} \in A_{i+1}$, the set $\{c(a_i) \mid a_i \subseteq a_{i+1}\}$ contains at least two distinct colors, then $\phi$ is called an $i$-dimensional $k$-coloring of $H$. The smallest integer $k$ for which such a coloring exists is referred to as the $i$-dimensional chromatic number of $H$, denoted by $\chi^i_{\mathbb{R}^d}(H)$.
	\end{definition}

	At the beginning, it is necessary to demonstrate that the sum of the initial weights assigned to all topological hyperedges is less than or equal to zero. 
	
	\begin{lemma}\label{discharging-lemma}
		Let $G$ be an $R^d$-hypergraph, and $\{a,b,c\}$ be three integers such that $c=2\cdot a+d\cdot b$. 
		
		We give the {\bf initial weight assignment} of the $i$-dimensional topological hyperedges $a_i$ of $G$ for $a_i \in A_i$:
		\begin{align*}
			\omega(a_d) &= a\cdot d_{d-1}(a_d) - c, \\
			\omega(a_{d-1}) &= 0, \\
			\omega(a_{d-2}) &= b\cdot {d}_{d-1}(a_{d-2}) - c, \\
			\omega(a_i) &=
			\begin{cases} 
				(-1)^i\cdot c, &\text{if  } d \text{ is odd} \\
				(-1)^{i+1}\cdot c, &\text{if } d \text{  is even}
			\end{cases} \;\;\; \text{for } \;\; i\in \{0,1,2,...,d-3\},
		\end{align*}
		where $d_{d-1}(a_i)$ denotes the {\it $(d-1)$-dimensional degree} of $a_i$ (Definition~\ref{def-incident}).
		Then, the sum of the initial weights is 
		\begin{equation}\label{eq:eu}
			\sum_{i=0}^{d} \sum_{a_i\in A_i} \omega(a_i) = c\cdot [-1+ (-1)^{d+1}] \leq 0.
		\end{equation}
	\end{lemma}

	
	
	\begin{proof}
		First, it is easy to verify:
		\begin{align*}
			\sum_{a_d\in A_d} d_{d-1}(a_d) &= 2\cdot |A_{d-1}|, \\
			\sum_{a_{d-2}\in A_{d-2}} d_{d-1}(a_{d-2}) &= d\cdot |A_{d-1}|, 
		\end{align*}
		then we have
		\[
		\sum_{a_d\in A_d} \omega(a_d)+ \sum_{a_{d-2}\in A_{d-2}} \omega(a_{d-2}) = (2\cdot a+d\cdot b) \cdot |A_{d-1}| - c \cdot |A_{d-2}| - c \cdot |A_d|= c \cdot (-|A_{d-2}|+|A_{d-1}|-|A_d|).
		\]
		Thus, using Euler's formula~\cite{armstrong2013basic}
		\begin{equation}\label{eq:euler}
			\sum_{i=0}^{d} (-1)^i \cdot |A_i|=1+ (-1)^d 
		\end{equation}
		leads to Eq.~\eqref{eq:eu}.
	\end{proof}
	

	
	\begin{theorem}\label{apply-for-discharging}
		Let $H$ be a $d$-uniform $\mathbb{R}^d$-hypergraph with $d\geq 3$. Then $\chi^{d-2}_{\mathbb{R}^d}(H) \leq d + 3$. 		
	\end{theorem}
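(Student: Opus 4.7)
The plan is to argue by contradiction via a minimal counterexample, combined with the discharging apparatus of Lemma~\ref{discharging-lemma}. Suppose there exists a $d$-uniform $\mathbb{R}^d$-hypergraph $H$ with $\chi^i_{\mathbb{R}^d}(H) \geq d+4$, and choose such an $H$ minimizing $|A_0(H)|$. The central strategy is first to extract structural restrictions from minimality, and then to show that these restrictions are inconsistent with a suitable weight assignment.

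For the structural restrictions, I would prove that every vertex $v \in A_0(H)$ satisfies $d_H(v) \geq d+3$. If $d_H(v) \leq d+2$, one deletes $v$ and retriangulates the $(d-1)$-dimensional hole left behind (whose boundary is the link of $v$, a triangulated $(d-2)$-sphere) to produce a smaller $d$-uniform $\mathbb{R}^d$-hypergraph $H'$. By minimality, $H'$ admits an $i$-dimensional $(d+3)$-coloring; this extends to $H$ trivially when $d_H(v) \leq d+1$, and by a Kempe-chain swap on the link of $v$ when $d_H(v) = d+2$, in direct analogy with the classical five-color theorem. Similar arguments should exclude a short list of further reducible local configurations around low-degree vertices and low-incidence $(d-2)$-hyperedges.

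With these reducible configurations excluded, I would apply the initial weight assignment of Lemma~\ref{discharging-lemma} with a concrete choice of the constants $a$ and $b$ tuned so that the potentially negative initial charges sit exactly at the now-forbidden configurations. By Lemma~\ref{discharging-lemma} the total charge is at most zero. Next I would design discharging rules that move charge from the higher-dimensional hyperedges $a_d$ and $a_{d-2}$, which under $\delta(H) \geq d+3$ and $d$-uniformity carry strictly positive charge, down the incidence relations to the lower-dimensional hyperedges $a_0,\ldots,a_{d-3}$. The identities $\sum_{a_d \in A_d} d_{d-1}(a_d) = 2|A_{d-1}|$ and $\sum_{a_{d-2} \in A_{d-2}} d_{d-1}(a_{d-2}) = d|A_{d-1}|$ from the proof of Lemma~\ref{discharging-lemma} guarantee that the rules are charge-preserving. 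The rules must be engineered so that after redistribution every $a_i$ has strictly positive final charge; summing then contradicts the non-positivity of the initial total.

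The main obstacle will be the Kempe-chain step in the reducibility analysis: in the planar case the neighbors of a low-degree vertex lie on a cycle and a classical two-color swap frees a color, but in higher dimensions they form a triangulated $(d-2)$-sphere, and alternating-color subgraphs within such a sphere can carry intricate topology that must be controlled to ensure the swap actually liberates a color. A secondary technical concern is verifying that the retriangulation of the hole left by deleting $v$ preserves $\mathbb{R}^d$-embeddability and introduces no $K_{d+3}$ or $K_{3,d+1}$ minor; this can be addressed by appealing to Theorem~\ref{anti-minor}. The third potential difficulty lies in choosing the discharging rules themselves: one likely needs a short list of rules indexed by the codimension of the sending/receiving hyperedges, and verifying global non-negativity of final charges for every admissible local configuration is where most of the case analysis will concentrate.
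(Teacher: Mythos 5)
Your plan is not the same as the paper's, and it has a genuine gap that traces back to a misreading of what object is being colored. The theorem bounds $\chi^{i}_{\mathbb{R}^d}(H)$, which by the definition in the paper is the chromatic number of the $(d-2)$-dimensional hyperedges (the map $\phi\colon A_{d-2}\to\{1,\dots,k\}$ must make every $(d-1)$-hyperedge non-monochromatic on its $(d-2)$-faces). Your reducibility analysis is framed entirely around \emph{vertices}: you take a minimal counterexample minimizing $|A_0|$, argue that every vertex has degree at least $d+3$, and propose a Kempe-chain swap on the link of a low-degree vertex, in analogy with the five-color theorem. That analogy does not transfer here, because you are not coloring vertices. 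Deleting a vertex and retriangulating the hole changes the set of $(d-2)$-faces in a complicated way, and it is not clear how a $(d-2)$-face coloring of the retriangulated complex relates to one of $H$; the Kempe-chain machinery you invoke is designed for vertex colorings and has no direct counterpart for face colorings of simplicial complexes. You flag the Kempe step as the "main obstacle," but the obstacle is more fundamental: it is the wrong object.

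The paper avoids this entirely by taking the minimal counterexample with respect to $|A_{d-2}|$ and proving a single, very cheap reducible configuration: every $(d-2)$-hyperedge $a_{d-2}$ is incident to at least $d+3$ $(d-1)$-hyperedges. The proof is a plain greedy extension — delete $a_{d-2}$, color $H\setminus a_{d-2}$ by minimality, and observe that since each $(d-1)$-simplex containing $a_{d-2}$ forbids at most one color for $a_{d-2}$, fewer than $d+3$ incidences leaves a free color. No Kempe chains, no retriangulation, no link analysis. Your discharging sketch is closer in spirit to the paper's, but also off in the accounting: the paper's total initial charge is $\leq 0$ (zero for odd $d$, $-2d(d+1)$ for even $d$), and the contradiction comes not from making every final charge strictly positive but from a single global inequality driven by Lemma~\ref{quanhe} ($|A_d|-|A_{d-1}|+|A_{d-2}|\geq 1$), which controls the sum of charges on the low-dimensional hyperedges after a single redistribution rule. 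You would do better to re-anchor your reducibility on $(d-2)$-hyperedges and use the greedy-extension argument; the discharging part can then follow the paper's (R1)/(R2) with $a=d$, $b=d-1$, $c=d(d+1)$, and the key input from Lemma~\ref{quanhe}.
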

	
	\begin{proof}
		
		We proceed by contradiction. Assume that Theorem \ref{apply-for-discharging} does not hold, and let $H$ be a minimal counterexample with respect to $|A_{d-2}|$.
		
		We first establish the following lemma about reducible configuration.
		
		\begin{lemma}\label{jinyong1}
			For any $a_{d-2}\in A_{d-2}$, the number of $(d-1)$-dimensional topological hyperedges which are incident with $a_{d-2}$ is at least $d+3$.
		\end{lemma}
		
		\begin{proof}
			Since $H$ is a minimal counterexample, $H\backslash a_{d-2}$ admits a $(d-2)$-dimensional $(d+3)$-coloring $\phi$.
			
			Suppose to the contrary that the number of $(d-1)$-dimensional topological hyperedges which are incident with $a_{d-2}$ is at most $d+2$. 
			Then the coloring $\phi$ of $H\backslash a_{d-2}$ can be extended to $H$, a contradiction. Hence the assumption is false, and the lemma follows.
		\end{proof}

		%
		%
		%

		By Lemma \ref{discharging-lemma}, we assign weights to the hyperedges
		\begin{align*}
			\omega(a_d) &= d\cdot d_{d-1}(a_d) - d(d+1), \\
			\omega(a_{d-1}) &= 0, \\
			\omega(a_{d-2}) &= (d-1)\cdot {d}_{d-1}(a_{d-2}) - d(d+1), \\
			\omega(a_i) &=
			\begin{cases} 
				(-1)^i\cdot d(d+1), &\text{if  } d \text{ is odd} \\
				(-1)^{i+1}\cdot d(d+1), &\text{if } d \text{  is even}
			\end{cases} \;\;\; \text{for } \;\; i\in \{0,1,2,...,d-3\}.
		\end{align*}
		Then the sum of the initial weights is
		\[
		\sum_{i=0}^{d} \sum_{a_i\in A_i}\omega(a_i) = d(d+1) \cdot [-1+ (-1)^{d+1}] =
		\begin{cases}
			0, & \text{if $d$ is odd}, \\[6pt]
			-2d(d+1), & \text{if $d$ is even}.
		\end{cases}
		\]
		If we can define some Discharging rules and prove that: 
		\begin{itemize}
			\item If $d$ is odd, then $\omega'(a_i)\geq 0$ for every $i\in\{1,2,\dots,d\}$, and moreover, there exists some $a_j$ such that $\omega'(a_j)>0$; 
			
			\item If $d$ is even, the total weight of each hyperedge after Discharging is greater than $-2d(d+1)$; 
		\end{itemize}
		then a contradiction follows. 
		
		We now define the Discharging rules.
		
		\textbf{(R1)}. For $i\leq d-3$, collect all weights of $a_i\in A_i$, and then redistribute them equally so that every $a_i$ has the same weight.

		%

		
		%
		%
		%
		%
		%
		%
		%
		%
		%

		Denoting the new weights by $\omega'(a_i)$ after \textbf{(R1)}, we have:
		\begin{itemize}
			\item $i=d$: $\omega'(a_d)= d\cdot d_{d-1}(a_d) - d(d+1)\geq d(d+1)-d(d+1)=0$;
			\item $i=d-1$: $\omega'(a_{d-1})=0$;
			\item $i=d-2$: by Lemma \ref{jinyong1}, 
			\[
			\omega'(a_{d-2})\geq (d-1)\cdot d_{d-1}(a_{d-2}) - d(d+1) \geq (d-1)(d+3)-d(d+1)=d-3;
			\]
			\item $i\leq d-3$: by Euler's formula~\eqref{eq:euler}, 
			\begin{align*}
				&\sum_{i=0}^{d-3} \sum_{a_i\in A_i}\omega'(a_i) \\
				=&
				\begin{cases}
					d(d+1)\cdot \sum_{i=0}^{d-3}(-1)^i|A_{i}| = d(d+1)\cdot (|A_{d}|-|A_{d-1}|+|A_{d-2}|), & \text{if $d$ is odd}, \\[6pt]
					d(d+1)\cdot \sum_{i=0}^{d-3}(-1)^{i+1}|A_{i}| = d(d+1)\cdot (|A_{d}|-|A_{d-1}|+|A_{d-2}| -2), & \text{if $d$ is even},
				\end{cases}
			\end{align*}
			then by Lemma~\ref{quanhe},
			\[
			\sum_{i=0}^{d-3} \sum_{a_i\in A_i}\omega'(a_i) \geq (-1)^{d+1} \cdot d(d+1) = 
			\begin{cases}
				d(d+1), & \text{if $d$ is odd}, \\[6pt]
				-d(d+1), & \text{if $d$ is even}.
			\end{cases}
			\]
		\end{itemize}
		That is, after applying \textbf{(R1)}, when $d$ is odd, it is straightforward to verify that $\omega'(a_i)\geq 0$ for every $i\in \{1,2,\dots,d\}$, and there exists some $a_j$ such that $\omega'(a_j)>0$. This contradiction completes the proof of Theorem~\ref{apply-for-discharging} in the odd case. 
		
		We now turn to the case when $d$ is even which requires \textbf{(R2)}:
		
		
		\textbf{(R2)}. After \textbf{(R1)}, for all hyperedges of dimension at most $d-2$, we sum their weights and then redistribute this total weight equally among all such hyperedges. 
		
		Denoting the new weights by $\omega''(a_i)$ after \textbf{(R2)}, at this point, it suffices to verify that 
		\begin{equation}\label{eq:even}
			\sum_{i=0}^{d-2}\sum_{a_i\in A_i}\omega''(a_i) > -2d(d+1), 
		\end{equation}
		which immediately yields a contradiction. Since $d$ is even, and Theorem \ref{apply-for-discharging} is trivial if $|A_{d-2}|\leq d+3$, 
		we may assume $|A_{d-2}|\geq d+4$. In consequence, we are able to verify Eq.~\eqref{eq:even} as follows
		\begin{align*}
			\sum_{i=0}^{d-2}\sum_{a_i\in A_i}\omega''(a_i) 
			&= \sum_{i=0}^{d-3}\sum_{a_i\in A_i}\omega'(a_i) + \sum_{a_{d-2}\in A_{d-2}}\omega'(a_{d-2}) \\
			&\geq -d(d+1) + (d-3)\cdot |A_{d-2}| \\
			&\geq -d(d+1)+(d-3)(d+4) = -12 >-2d(d+1).
		\end{align*}

	\end{proof}
	
	\begin{remark}
		Theorem~\ref{apply-for-discharging} merely serves to illustrate how the discharging method can be applied in higher dimensions. We have not attempted to optimize the upper bound for the $(d-2)$-dimensional chromatic number. We conjecture that the true upper bound for the $(d-2)$-dimensional chromatic number of $d$-uniform $\mathbb{R}^d$-hypergraphs is likely to be $d+1$.
	\end{remark}

	\begin{conjecture}
		Let $H$ be a $d$-uniform $\mathbb{R}^d$-hypergraph. Then $\chi^{d-2}_{\mathbb{R}^d}(H) \leq d+1$.
	\end{conjecture}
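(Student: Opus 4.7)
The plan is to attempt a refined discharging argument building directly on the framework of Theorem \ref{apply-for-discharging}, since the gap from $d+3$ to $d+1$ is the tightest possible before one meets simplex obstructions. First I would sharpen the reducibility analysis. In a minimal counterexample $H$ and for any $(d-2)$-hyperedge $a_{d-2}$, the hypergraph $H\setminus a_{d-2}$ admits a $(d+1)$-coloring $\phi$; a color $c$ is forbidden for $a_{d-2}$ precisely when some $(d-1)$-hyperedge incident with $a_{d-2}$ has all its remaining $d-1$ boundary $(d-2)$-hyperedges colored $c$ under $\phi$. So the starting reducible configuration is: each $(d-2)$-hyperedge in a minimal counterexample must have at least two incident $(d-1)$-hyperedges whose remaining boundaries are monochromatic in distinct colors; otherwise $a_{d-2}$ admits a legal color.

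Next I would set up a discharging scheme modelled on Lemma \ref{discharging-lemma} with $\{a,b,c\}$ chosen so that the target $d(d-1)$ replaces $d(d+1)$, giving $\omega(a_{d-2}) = (d-2)\cdot d_{d-1}(a_{d-2}) - d(d-1)$ and $\omega(a_d) = (d-1)\cdot d_{d-1}(a_d) - d(d-1)$. The rules $(R1)$ and $(R2)$ would then redistribute weight from high-degree $(d-1)$-hyperedges and from the low-dimensional contributions controlled by Euler's formula into the $(d-2)$-hyperedges of low $(d-1)$-degree. Crucially, I would use Lemma \ref{sphere}, which forces the link of every vertex to be a $(d-1)$-sphere, to control local degrees in a way analogous to how planar discharging exploits the triangulated neighborhoods of vertices. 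A tight example such as the boundary complex of the cross-polytope (or an iterated stacking thereof) should guide which rules give nontrivial slack.

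The hard part is that the bound $d+1$ leaves essentially no margin in the direct recoloring argument: an incident $(d-1)$-hyperedge can force an entire color to be forbidden, and in the worst case one can plausibly arrange $d+1$ distinct forbidden colors with only modest local $(d-1)$-degree. I therefore expect that pure discharging will not suffice and that the decisive step must be a Kempe-chain style exchange: given an $a_{d-2}$ with every color forbidden, find a $(d-2)$-hyperedge chain alternating between two colors $c$ and $c'$ whose reversal frees one color at $a_{d-2}$. In the planar Four Color proof such chains close up thanks to the Jordan Curve Theorem; here one would need to invoke the Jordan–Brouwer Separation Theorem (Lemma \ref{jordan-Brouwer}) together with the higher-dimensional analogues of bridge theory developed in Section \ref{section-Br} to show that a pair of overlapping Kempe chains cannot both persist. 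Controlling the closure of such chains in dimension $d \geq 3$, and ruling out the simultaneous failure of chain swaps at every forbidden color, is where I expect the essential difficulty to lie, and it is likely that genuinely new structural input beyond the present paper will be required to settle the conjecture in full generality.
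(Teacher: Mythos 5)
The statement you are working on is presented in the paper as a \emph{conjecture}, not a theorem: the authors explicitly leave it open (remarking only that they expect the true bound to be $d+1$), so there is no proof of record to compare your attempt against. Your write-up is in fact an analysis of obstacles rather than a proof, and you correctly reach the conclusion that the question remains open.

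Two substantive remarks on your sketch. First, your reducibility observation should be pushed to its full strength: a color $c$ is forbidden at $a_{d-2}$ precisely when some incident $(d-1)$-hyperedge has all its other $d-1$ facets colored $c$, and each incident hyperedge can forbid at most one color; hence a minimal counterexample forces $d_{d-1}(a_{d-2}) \geq d+1$, not merely the existence of two monochromatic incidences. Second, your proposed weights do not satisfy the compatibility constraint $c = 2a + d\,b$ from Lemma~\ref{discharging-lemma}: with $a = d-1$, $b = d-2$ you get $2a + db = d^2 - 2$, whereas you set $c = d(d-1) = d^2 - d$. Beyond this arithmetic slip, a quick feasibility check shows the whole linear family collapses: demanding $\omega'(a_{d-2}) \geq 0$ for $d_{d-1}(a_{d-2}) \geq d+1$ forces $b \geq 2a$, while demanding $\omega'(a_d) \geq 0$ for the minimum value $d_{d-1}(a_d) = d+1$ (a $d$-simplex) forces $a(d-1) \geq db$, and these two inequalities are jointly unsatisfiable for $d \geq 1$. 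So your pessimism about pure discharging is well founded and can be made rigorous. Your Kempe-chain proposal, using the Jordan--Brouwer separation theorem and the bridge theory of Section~\ref{section-Br} in place of the planar Jordan curve argument, is the natural next idea; but as you note, controlling chain closure and ruling out simultaneous chain failures in dimension $d \geq 3$ is exactly where the difficulty lies, and it is consistent with the paper's own decision to leave the statement as a conjecture.
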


	%
	%
	%
	%
	%
	%
	%

	\begin{lemma}\label{quanhe}
		Let $P$ be an $\mathbb{R}^d$-hypergraph with $d \geq 3$, and let $f_i$ denote the number of its $i$-dimensional topological hyperedges ($i$-faces). Then
		\[
		f_d - f_{d-1} + f_{d-2} \geq 1.
		\]
	\end{lemma}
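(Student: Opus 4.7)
My plan is to rewrite the target quantity as an Euler characteristic of a low-dimensional skeleton and then exploit the homotopy triviality built into the definition of an $\mathbb{R}^d$-hypergraph. First I would view $P$ as embedded in $S^d$ (as noted after Definition \ref{special0}, using the generalized Poincar\'e conjecture); by Lemma \ref{structure}, each of the $|A_d|$ complementary components is an open $d$-ball, so $P$ together with these top cells gives a genuine CW decomposition of $S^d$ and Euler's formula applies:
\[
\sum_{i=0}^{d}(-1)^i|A_i| \;=\; \chi(S^d) \;=\; 1+(-1)^d.
\]

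Next I set $T := \sum_{i=0}^{d-3}(-1)^i|A_i|$ and rearrange the Euler relation to isolate the last three terms; a short algebraic manipulation yields
\[
|A_d|-|A_{d-1}|+|A_{d-2}| \;=\; \begin{cases} T & \text{if } d \text{ is odd},\\[2pt] 2-T & \text{if } d \text{ is even}. \end{cases}
\]
The desired inequality therefore reduces to $T\ge 1$ when $d$ is odd and $T\le 1$ when $d$ is even.

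The key point is to recognise $T$ as the Euler characteristic of the $(d-3)$-skeleton $Q := P^{(d-3)}$. Since the inclusion $Q\hookrightarrow P$ induces isomorphisms on $\pi_i$ for $i<d-3$, and the definition of a $(d-1)$-dimensional topological hypergraph forces $\pi_i(P)=0$ for $1\le i\le d-2$, we obtain $\pi_i(Q)=0$ for all $1\le i\le d-4$ (and $Q$ is connected, inherited from the assumed connectedness of $P$). The Hurewicz theorem, applied inductively, then gives $H_i(Q;\mathbb{Z})=0$ for $1\le i\le d-4$, while $\dim Q\le d-3$ kills any strictly higher homology. Hence the only possibly nonzero Betti numbers are $\beta_0(Q)=1$ and $\beta_{d-3}(Q)\ge 0$, so
\[
T \;=\; \chi(Q) \;=\; 1 + (-1)^{d-3}\beta_{d-3}(Q),
\]
which is $\ge 1$ when $d$ is odd (so $d-3$ is even) and $\le 1$ when $d$ is even (so $d-3$ is odd). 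Either way, $|A_d|-|A_{d-1}|+|A_{d-2}|\ge 1$.

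The routine steps are Euler's formula and the algebraic rearrangement. The mildly delicate part is the base case of the skeleton analysis: for $d=3$ one has $Q=P^{(0)}$ (just vertices), so $T=|A_0|\ge 1$ is immediate; for $d=4$ one has $Q=P^{(1)}$, which is connected by the hypothesis, giving $\chi(Q)=|A_0|-|A_1|\le 1$. I expect the main conceptual obstacle to be verifying carefully that $Q$ genuinely inherits the low-dimensional homotopy triviality from $P$ (so that Hurewicz applies and kills all intermediate homology); once the identification $T=\chi(Q)$ and the Hurewicz input are both secured, the parity case split delivers the estimate with no further work.
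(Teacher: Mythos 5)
Your proof is correct, and it takes a genuinely different route from the paper's. The paper forms a dual graph $G$ whose vertices are the $d$-faces and whose edges are the $(d-1)$-faces (each shared by exactly two $d$-faces), notes that $G$ is connected so its cycle space has dimension $|A_{d-1}|-|A_d|+1$, and then asserts that each independent cycle ``yields at least one $(d-2)$-face'' to conclude $|A_{d-2}| \geq |A_{d-1}|-|A_d|+1$. Your approach instead works homologically: you use the full Euler relation $\sum_{i=0}^{d}(-1)^i|A_i| = 1+(-1)^d$ for the cell decomposition of $S^d$ supplied by Lemma \ref{structure}, rewrite the target as a sign condition on $T=\chi\bigl(P^{(d-3)}\bigr)$, and then exploit the fact that the $(d-3)$-skeleton inherits $(d-4)$-connectedness from $P$ (equivalently from $S^d$) via the standard skeleton-inclusion theorem. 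Hurewicz then kills all intermediate homology, leaving $\chi\bigl(P^{(d-3)}\bigr)=1+(-1)^{d-3}\beta_{d-3}$, which has the right sign in both parities. What your route buys: the paper's passage from ``each independent cycle yields a $(d-2)$-face'' to the bound on $|A_{d-2}|$ tacitly requires the map from a cycle basis to $(d-2)$-faces to be injective, and this is not argued; your Hurewicz calculation sidesteps the issue entirely and is cleaner, at the cost of invoking more algebraic topology. You also correctly handle the low-dimensional cases $d=3$ (where the $0$-skeleton is disconnected and Hurewicz does not apply) and $d=4$ (where the $1$-skeleton may fail to be simply connected) by direct inspection, which is exactly the care those cases require.
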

	
	\begin{proof}
		We prove this inequality by considering the dual $\mathbb{R}^d$-hypergraph (Definition \ref{dual graph}) of $P$, denoted by $P^*$. Since $P$ is an $\mathbb{R}^d$-hypergraph, its dual $P^*$ is also an $\mathbb{R}^d$-hypergraph by Lemma \ref{dual-rd}. By the duality principle, there is a bijection between the $i$-faces of $P$ and the $(d-i)$-faces of $P^*$.
		
		Let $g_k$ denote the number of $k$-faces of the dual $\mathbb{R}^d$-hypergraph $P^*$. The correspondence implies:
		\begin{align*}
			f_d(P) &= g_0(P^*) \quad (\text{number of vertices}), \\
			f_{d-1}(P) &= g_1(P^*) \quad (\text{number of edges}), \\
			f_{d-2}(P) &= g_2(P^*) \quad (\text{number of 2-faces}).
		\end{align*}
		Substituting these into the original expression, the inequality to be proved becomes:
		\[
		g_0 - g_1 + g_2 \geq 1.
		\]
		Consider the 1-skeleton of $P^*$, which forms a connected graph $G = (V, E)$ with $|V| = g_0$ and $|E| = g_1$. The number of linearly independent cycles (the cyclomatic number or the first Betti number of the graph) is given by:
		\[
		\beta_1 = g_1 - g_0 + 1.
		\]
		Since the $1$-th homotopy group of $P^*$ is trivial. This implies that every cycle in the $1$-skeleton of $P^*$ is the boundary of a $2$-chain formed by the $2$-faces of $P^*$. 
		Therefore, the boundaries of the $g_2$ 2-faces must span the cycle space of the graph $G$, which has dimension $\beta_1$. For a set of $g_2$ vectors to span a vector space of dimension $g_1 - g_0 + 1$, we must have:
		\[
		g_2 \geq g_1 - g_0 + 1.
		\]
		Rearranging this inequality yields:
		\[
		g_0 - g_1 + g_2 \geq 1.
		\]
		Substituting the face counts of $P$ back into the equation, we obtain:
		\[
		f_d - f_{d-1} + f_{d-2} \geq 1.
		\]
	\end{proof}

	\begin{definition}[dual $\mathbb{R}^d$-hypergraph]\label{dual graph}
		Let $P$ be an $\mathbb{R}^d$-hypergraph which is embedded in $\mathbb{R}^d$, and let $A_i$ denote  the set of $i$-face of $P$ (Definition~\ref{def:Ai}). 
		The \textit{dual $\mathbb{R}^d$-hypergraph} $P^*$ of $P$ is constructed as follows (let $A^*_i$ denote the set of $i$-faces of $P^*$):
		
		\begin{itemize}
			\item \textbf{$0$-faces (vertices):} For every $d$-face $a_d \in A_d$ of $P$, there corresponds a unique vertex $a^*_{a_d} \in A^*_0$. Thus, $|A^*_0| = |A_d|$. 
			
			After this step, we get an empty graph $P_0$.

			\item \textbf{$1$-faces (edges):} Whenever a $(d-1)$-face $a_{d-1}$ of $P$ lies on the common boundary of two $d$-faces $a_d$ and $a_d'$, we add in $P_0$ a $1$-face $a^*_{a_{d-1}}$ that connects the $0$-faces $a^*_{a_d}$ and $a^*_{a_d'}$.
			
			Therefore, for every $(d-1)$-face $a_{d-1} \in A_{d-1}$ of $P$, there corresponds a unique $1$-face $a^*_{a_{d-1}} \in A^*_1$. Thus, $|A^*_1| = |A_{d-1}|$. 
			
			After this step, we get a graph $P_1$. 
			
			\item \textbf{$2$-faces:} Whenever a $(d-2)$-face $a_{d-2}$ of $P$ is the intersection of all $(d-1)$-faces in a set
			$J=\{a_{d-1}^1, a_{d-1}^2, \ldots, a_{d-1}^x\} \subseteq A_{d-1}$, 
			where $J$ is maximal with respect to this property, it is easy to see that the $1$-faces in $P_1$ corresponding to the elements of $J$ form a cycle $C$. 
			By filling the cycle $C$ with a $2$-ball, we obtain a $2$-face $a^*_{a_{d-2}}$ of $P^*$; equivalently, we add a $2$-face $a^*_{a_{d-2}}$ in $P_1$.

			Therefore, for every $(d-2)$-face $a_{d-2} \in A_{d-2}$ of $P$, there corresponds a unique $2$-face $a^*_{a_{d-2}} \in A^*_2$. Thus, $|A^*_2| = |A_{d-2}|$. 
			
			After this step, we get $P_2$. 
			
			\item $\dots$ Repeating the above procedure yields the dual graph $P^*$ $\dots$

			\item \textbf{$d$-faces:} For every $0$-face $a_{0} \in A_{0}$ of $P$, there corresponds a unique $d$-face $a^*_{a_{0}} \in A^*_d$. Thus, $|A^*_d| = |A_{0}|$. 
			
			After this step, we get $P_d =P^*$. 
			
		\end{itemize}
		
	\end{definition}
	
	\begin{lemma}\label{dual-rd}
		Let $P$ be an $\mathbb{R}^d$-hypergraph, then the dual $\mathbb{R}^d$-hypergraph $P^*$ of $P$  is also an $\mathbb{R}^d$-hypergraph. 
	\end{lemma}
	
	\begin{proof}
		Since the above construction is performed in $\mathbb{R}^d$ and all added faces intersect only along their boundaries, the $i$-th homotopy group of $P^*$ is trivial for $i\leq d-2$ by Lemmas \ref{attach}  and \ref{attach2}, and $P^*$ admits an embedding in $\mathbb{R}^d$. Therefore, $P^*$ is an $\mathbb{R}^d$-hypergraph. 
	\end{proof}
	
	%

	\section{Future work}\label{future}
	
	Note that Theorem \ref{chromatic-number} only provides a rough estimate of the upper bound for the chromatic number of graphs embeddable in $\mathbb{R}^d$. Therefore, we are still far from a proof of the Hadwiger conjecture. 
	Based on Corollary~\ref{anti-minor2}, the Hadwiger conjecture is equivalent to the following two statements. 
	
	\begin{itemize}
		\item Let $G$ be the $1$-skeleton of an $\mathbb{R}^d$-hypergraph, then $\chi(G)\leq d+2$. 
		\item If a graph $G$ does not contain a $K_{d+3}$-minor but does contain a $K_{i, d+4-i}$-minor for $i \in \{2, 3, \dots, \lfloor \frac{d+4}{2} \rfloor \}$, then $\chi(G) \leq d+2$. 
	\end{itemize}

	\section*{Acknowledgement}
	This work was funded by the National Key R \& D Program of China (No.~2022YFA1005102) and the National Natural Science Foundation of China (Nos.~12325112, 12288101).

	\section*{Statements and Declarations}
	
	\begin{itemize}
		\item Competing interests: We hereby declare that there are no conflicts of interest to disclose in relation to this submission. We have no affiliations, relationships, or financial interests that could be perceived as influencing this work.
		\item Data availability: This research does not involve any data generation or analysis. Therefore, no data are available. 
	\end{itemize}

	

	\bibliographystyle{plain}
	\bibliography{sample-base}
	

\end{document}